\DeclareMathOperator{\Ran}{{\mathrm{Exp}^\ast}}
\DeclareMathOperator{\TopRan}{\mathrm{Exp}^\ast_{\mathrm T}}
\DeclareMathOperator{\MetricRan}{\mathrm{Exp}^\ast_{\mathrm M}}
\DeclareMathOperator{\Exp}{{\mathrm{Exp}}}
\DeclareMathOperator{\TopExp}{{\mathrm{Exp}}_{\mathrm T}}
\DeclareMathOperator{\MetSphere}{{\mathrm{S}}_{\mathrm M}}
\DeclareMathOperator{\MetricExp}{{\mathrm{Exp}}_{\mathrm M}}
\DeclareMathOperator{\ExpMin}{{\mathsf{Exp}}_\vee}
\DeclareMathOperator{\Exit}{{\mathsf{Exit}}}
\DeclareMathOperator{\Cone}{{\mathrm{C}}}
\DeclareMathOperator{\Conv}{{\mathrm{Conv}}}
\newcommand{\FiniteProduct}[1]{\prod_{#1}{}^{\raisebox{4pt}{\( \mathrm{f} \)}}}
\newcommand{\SmallFiniteProduct}[1]{\prod_{#1}^{\mathrm f}}
\newcommand{\TinyFiniteProduct}{\times^{\mathrm f}}
\newtheorem{construction}[theorem]{Construction}
\newtheorem{Open question}[theorem]{Open question}
\newcommand{\Vect}{\mathsf{Vect}}
\newcommand{\PointedOmega}{\OrdinalOmega_\ast}
\newcommand{\TruncatedTopExp}[1]{
{\mathrm{Exp}}^{\leq_\ast #1}_{\mathrm T}}
\newcommand{\TruncatedMetExp}[1]{
{\mathrm{Exp}}^{\leq_\ast #1}_{\mathrm M}}
\newcommand{\TruncatedExp}[1]{{\mathrm{Exp}}^{\leq_\ast #1}}
\newcommand{\StratumExp}[1]{\mathrm{Exp}_\mathrm{M}^{#1}}
\newcommand{\Circle}{\mathbf{S}^1}
\newcommand{\Torus}[1]{\mathbf{T}^{#1}}
\newcommand{\OrdinalOmega}{\omegaup}
\newcommand{\Sets}{\mathsf{Set}}
\newcommand{\Op}{^\mathrm{op}}
\newcommand{\SymmetricGroup}[1]{\mathbf{S}_{#1}}
\renewcommand{\Top}{\mathsf{Top}}
\newcommand{\PiZero}{\piup_0}
\newcommand{\PiN}[1]{\piup_{#1}}
\newcommand{\GenMet}{\mathsf{Met}_\infty}
\newcommand{\TOne}{\mathsf{T}_1}
\newcommand{\IdemTwo}{\mathbf{I}_2}
\newcommand{\Radius}{\mathrm{r}}
\newcommand{\Center}{\mathrm{c}}
\newcommand{\lInfinity}{\ell^∞}
\newcommand{\Ball}{\mathrm{B}}
\newcommand{\Category}[1]{\mathcal #1}
\newcommand{\CommutativeMonoids}{\mathsf{Commutative~monoids}}
\newcommand{\FinSurj}{\mathsf{Fin}_{\twoheadrightarrow}}
\newcommand{\Fin}{\mathsf{Fin}}
\newcommand{\Slice}[2]{{#1}_{/#2}}
\newcommand{\CoSlice}[2]{{#1}_{#2/}}
\newcommand{\Fact}[1]{\mathcal #1}
\newcommand{\UnskipRef}[1]{\unskip~\textnormal{[\ref{#1}]}}
\newcommand{\SectionRef}[1]{\unskip~[\hyperref[#1]{\S\thinspace\ref{#1}}]}
\theoremstyle{remark}
	\newtheorem{warning}[theorem]{Warning}
\begin{document}

\maketitle

\footnotefirstpage

\begin{abstract}
	Factorization algebras have been defined using three different
	topologies on the Ran space.
	We study these three different topologies on the exponential, which
	is the union of the Ran space and the empty configuration, and
	show that an exponential property is satisfied in each case.
	As a consequence we describe the weak homotopy type 
	of the exponential \( \Exp(X) \) for each topology, in the case where
	\( X \) is not connected.

	We also study these exponentials as stratified spaces and
	show that the metric exponential is conically stratified
	for a general class of spaces.
	As a corollary, we obtain that locally constant factorization
	algebras defined by Beilinson-Drinfeld are equivalent to
	locally constant factorization algebras defined by Lurie.
\end{abstract}

\tableofcontents

\newpage

Roughly speaking,
a factorization algebra \( \Fact A \)
on a space \( X \) with
values in a symmetric monoidal category \( \Category C^\otimes \)
is a gadget associating
to each finite subset of points \( S \subset X \) an object
\( \Fact A_S \in \Category C \), such that
\begin{description}
	\item[factorization]
		\( \Fact A_{\sqcup_{i \in I} S_i}
		\IsIsomorphicTo \otimes_{i \in I} \Fact A_{S_i} \)
		for every finite family \( I \) of disjoint
		finite subsets \( S_i \subset X \);
	\item[continuity]
		the assignment \( S \mapsto \Fact A_S \) be continuous.
\end{description}
Yet,
to be able to say that
\( \Fact A_S \) varies continuously with \( S \), one first
needs to answer the question:

\begin{center}
	\emph{What is the topology
	on the set of all finite subsets \( S \subset X \)?}
\end{center}

The set of all finite subsets of \( X \) is called the exponential
\( \Exp(X) \) of \( X \).
The literature on factorization algebras provides three different
candidates to topologize \( \Exp(X) \):
\begin{description}
	\item[2004] In \emph{Chiral Algebras}
		\cite[3.4.1]{doi:10.1090/coll/051}, Beilinson and Drinfeld
		endow \( \Exp(X) \) with a colimit
		topology;
	\item[2009] In \emph{Derived Algebraic Geometry VI}
		\cite[3.3.2]{arXiv:0911.0018}, Lurie
		endows \( \Exp(X) \) with a topology reminiscent
		of the metric topology introduced by Hausdorff on the
		space of compact subsets of a metric space;
	\item[2016] In \emph{Factorization Algebras in Quantum Field Theory}
		\cite[1.4.1]{doi:10.1017/9781316678626},
		Costello and Gwilliam use yet another topology to define
		factorization algebras, this time using coverings inspired
		from Weiss.
\end{description}

For a given separated \( X \), the three topologies above are given
from finest to coarsest and one then obtains three different
levels of strength for the continuity requirement of a factorization
algebra.
It has been conjectured that the three different definitions
agree in the special case of \emph{locally constant} factorization
algebras which are roughly speaking, those factorization algebras
\( \Fact A \) for which \( \Fact A_x \) is ``homotopic'' to
\( \Fact A_y \) when \( x, y \in X \) both belong to the same
contractible open subset.

The set of all finite subsets of \( X \) is called the exponential
of \( X \) because its algebraic properties resemble   
that of exponential functions.
This is the subject of the first section, where we define
\emph{exponential functors} in general and give some general properties.

In the second section, we introduce the three different topologies
giving rise to the topological exponential (B\&D version),
the metric exponential (Lurie version) and the minimal exponential
(C\&G version).
We show how each exponential listed above is an exponential functor
in the sense of the definition given in the first section.
From this we deduce the weak homotopy type of each exponential
in the case where \( X \) is not connected,
extending contractibility results
of Handel
\cite[4.3]{hjm:28-4}
and Curtis \& Nhu
\cite{doi:10.1016/0166-8641(85)90005-7}.

Finally, the last two sections are dedicated to the study of the
stratification of the metric exponential.
The goal is to show
that it is conically stratified (in the sense of Lurie) for
a general class of spaces.
For this we need to solve an optimization problem: finding
the smallest enclosing ball of a finite number of points in
a general normed space; this is the content of the third section.
Using a companion article from the second author
\cite{arXiv:2102.12325}, one can then deduce
from the conical stratification of the metric exponential
that the notions of
locally constant factorization algebras from
Beilinson \& Drinfeld and Lurie
coincide.

\section{Exponentials}

\subsection{Exponential functors}

Any continuous function \( f \From \Reals \to \Reals \)
satisfying \( f(x+y) = f(x) f(y) \) must be an exponential
\( x \mapsto a^x \) with base \( a = f(1) \).
\emph{The} exponential is traditionally the one with base
\( \e \) defined
\[
	\e^x \coloneqq \sum_{n \in \Naturals} \frac{x^n}{n!}
\]
using power series.

An analogous theory can be described in the realm of categories.
The set \( \Reals \) can be replaced by
any category \( \Category C \), functions can be replaced by functors,
sums can be replaced by coproducts
and products can be replaced by categorical products.

However, there shall be two main differences
between exponential functors and exponential functions.
First, what was a \emph{property} of a function in
the realm of set theory
shall become a \emph{structure} on a functor in category theory.
An exponential functor
shall be a \emph{symmetric monoidal functor}
\[
	\begin{tikzcd}
		\textstyle
		\Category C^\sqcup
		\rar["E"]
			& \Category C^\times
	\end{tikzcd}
\]
between \( \Category C \) endowed with the
coproduct symmetric monoidal structure and
\( \Category C \) endowed with the product symmetric monoidal
structure.

Let us see some of the first obvious consequences.
First, since each object \( X \) admits a map
\( \emptyset_{\Category C} \to X \) with source
the initial object of \( \Category C \), one gets a map
\[
	\ast_{\Category C}
	\IsIsomorphicTo E(\emptyset_{\Category C})
	\longrightarrow E(X)
\]
with source the
terminal object of \( \Category C \),
so each \( E(X) \) is a \emph{pointed object}
of \( \Category C \).
Since every object \( X \in \Category C \) is a commutative monoid
with product map \( X \amalg X \to X \) the fold map, it follows
that \( E(X) \) is also a commutative monoid with
composition
\[
	E(X) \times E(X) \IsIsomorphicTo E(X \amalg X)
	\longrightarrow E(X)
\]
and with unit the pointing already described.

Second, one needs to replace continuity with an equivalent notion.
There is already a notion of continuity for functors in category theory:
a functor is (co)continuous if it preserves small (co)limits.
This is unreasonable to ask.
Instead, let us rewrite an equivalent definition for the continuity
of an exponential function: a function \( f \From \Reals \to \Reals \)
for which \( f(x+y)=f(x)f(y) \) for every \( x, y \in \Reals \)
is continuous if and only if for every converging series
\( \sum x_n = x\), the sequence
with general term \( \prod_{i \leq n} f(x_i) \) converges to
\( f(x) \).
In category theory convergence is replaced by existence
and ``limit of a sequence'' can be replace with the 
``colimit of filtered diagram''.
To make this precise, we first recall the notion of the \emph{finite
product} of an infinite family.

\begin{definition}[(Finite product)]
	Let \( \Category C \) be a category with finite products
	and filtered colimits.
	Given a small family of pointed objects \( \{X_i\}_{i \in I} \)
	in \( \Category C \), let
	\[
		\FiniteProduct{i \in I} X_i
		\coloneqq
		\varinjlim_{
			\substack{J \subset I \\ J \text{ finite}}
		}
		\prod_{j \in J} X_j
	\]
	denote the finite product (or weak product, or restricted product)
	of the family obtained
	by taking the colimit over all finite subsets
	\( J \subset I \).
\end{definition}

\begin{example}
	In the category of vector spaces (seen as pointed via their zero
	vector), the finite product of a small family
	\( \{V_i\}_{i \in I} \),
	\[
		\FiniteProduct{i \in I} V_i \IsCanonicallyIsomorphicTo
		\bigoplus_{i \in I} V_i
	\]
	coincides with their direct sum.

	If \( \{X_i\}_{i \in I} \) is a small family of pointed topological
	spaces, then one has
	\[
		\begin{tikzcd}
			\displaystyle
			\FiniteProduct{i \in I} X_i
			\rar
				& {\displaystyle \prod_{i \in I} X_i}^{\text{box}}
		\end{tikzcd}
	\]
	a continuous injection from the finite product to the product,
	endowed with the box topology.
	When the points \( \ast \to X_i \) are all open,
	this map becomes an open embedding.
	In this case a basis of opens of the finite product is given
	by the images of the products \( \prod_{j \in J} U_j \) with
	\( J \subset I \) finite and \( U_j \subset X_j \) open.
\end{example}

\begin{construction}
	Let \( E \From \Category C^\sqcup \to \Category C^\times \)
	be a symmetric monoidal functor with \( \Category C \)
	having enough limits and colimits.
	Let \( \{X_i\}_{i \in I} \) be a small family of
	elements of \( \Category C \), then for each finite
	subset \( J \subset I \), one gets a map
	\[
		\textstyle
		\prod_{i \in J} E(X_i)
		\IsIsomorphicTo
		E\left(\coprod_{i \in J} X_i\right)
		\longrightarrow E\big(\coprod_{i \in I} X_i\big)
	\]
	using the functoriality of \( E \) and its monoidal
	structure.
	Moreover for any \( K \subset J \), the following
	diagram
	\[
		\begin{tikzcd}
			\prod_{i \in J} E(X_i)
			\rar["\IsIsomorphicTo"]
				&
				E\big(\coprod_{i \in J} X_i\big)
				\ar[rd]
				& \\
				&
				& E\big(\coprod_{i \in I} X_i\big) \\
			\prod_{i \in K} E(X_i)
			\ar[uu]
			\rar["\IsIsomorphicTo"]
				& E\big(\coprod_{i \in K} X_i\big)
				\ar[uu]
				\ar[ru]
		\end{tikzcd}
	\]
	commutes by functoriality of \( E \) and its monoidal
	structure.
	One then obtains a canonically defined map
	\[
		\FiniteProduct{i \in I} E(X_i)
		\longrightarrow
		E\left(\coprod_{i \in I} X_i\right)
	\]
	from the finite product of \( \{E(X_i)\}_{i \in I} \).
\end{construction}

\begin{definition}[(Exponential functor)]
	Let \( \Category C \) be a category with finite products,
	filtered colimits and small coproducts.
	An exponential functor of the category \( \Category C \)
	is a symmetric monoidal functor
	\[
		\begin{tikzcd}
			\textstyle
			\Category C^\sqcup
			\rar["E"]
				& \Category C^\times
		\end{tikzcd}
	\]
	between \( \Category C \) endowed with the
	coproduct symmetric monoidal structure and
	\( \Category C \) endowed with the product symmetric monoidal
	structure,
	for which in addition, the canonical map
	\[
		\textstyle
		\SmallFiniteProduct{i \in I} E(X_i)
		\longrightarrow
		E\big(\coprod_{i \in I} X_i\big)
	\]
	is an isomorphism,
	for every small family \( \{X_i\}_{i \in I} \) of objects of
	\( \Category C \).
	A morphism between two exponential functors
	is the data of a monoidal natural transformation.
\end{definition}

\begin{remark}
	One could define exponential functors in the following more
	abstract way. The coproduct is an \emph{infinitary} symmetric
	monoidal structure: the operations
	\( \{X_i\}_{i \in I} \mapsto \amalg_{i \in I} X_i \)
	are associative, symmetric and unital in an obvious way.
	Similarly, the finite product endows the category of pointed objects
	\( \Category C_\bullet \)
	with another infinitary symmetric monoidal structure.

	An exponential functor can then be defined as an infinitary
	symmetric monoidal functor
	\( \Category C^{\sqcup}
	\to \Category C^{\TinyFiniteProduct}_\bullet \).
	Here it happens that any (finitary) lax monoidal functor
	\( \Category C^{\sqcup} \to \Category C^{\times}_\bullet \)
	gives rise to an infinitary lax monoidal functor
	\( \Category C^{\sqcup}
	\to \Category C^{\TinyFiniteProduct}_\bullet \)
	which allows us to define exponential functors without
	first having to develop the theory of infinitary monoidal
	categories.
\end{remark}

The classification result for exponential functions on \( \Reals \)
has an equivalent form in the case of exponential functors
on the category of sets: these are classified by their base.

\begin{definition}[(Exponential of base \( A \))]
	Let \( A \) be a commutative monoid.
	The exponential of base \( A \) is the
	endofunctor of the category of sets
	defined by
	\[
		\Exp_A(X) \coloneqq
		\{ \phi \From X \to A
		\mid \phi^{-1}(0) \text{ is cofinite} \}
	\]
	for any set \( X \).
	If \( f \From X \to Y \) is a function and
	\( \phi \From X \to A \) is almost null,
	its image by \( \Exp_A(f) \) is the
	function \( \psi \From Y \to A \), where
	\[
		\psi(y) \coloneqq \sum_{x \in f^{-1}(y)} \phi(x)
	\]
	for any \( y \in Y \).
	The function \( \psi \) is well defined
	and almost null since \( A \) is commutative and
	\( \phi \) is almost null.
	The exponential structure of \( \Exp_A \) is
	straightforward.
\end{definition}

\begin{theorem}
	The assignment \( A \mapsto \Exp_A \) induces an equivalence
	\[
		\CommutativeMonoids
		\IsCanonicallyIsomorphicTo
		\mathsf{Set~exponentials}
	\]
	between the category of commutative monoids
	and the category of exponential functors of the category
	of sets.
\end{theorem}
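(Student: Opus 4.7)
The plan is to construct a quasi-inverse $F$ sending an exponential functor $E$ of the category of sets to the commutative monoid $E(\ast)$, with multiplication
\[
	E(\ast) \times E(\ast) \IsCanonicallyIsomorphicTo E(\ast \sqcup \ast) \longrightarrow E(\ast)
\]
induced by the fold map, as already observed in the introductory discussion of exponential functors. To establish the equivalence, I have to exhibit natural isomorphisms $A \IsCanonicallyIsomorphicTo \Exp_A(\ast)$ of commutative monoids and $E \IsCanonicallyIsomorphicTo \Exp_{E(\ast)}$ of exponential functors, and then verify that the assignment on morphisms is a bijection.

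The first composite is immediate: for any commutative monoid $A$, the set $\Exp_A(\ast)$ is canonically $A$, since every function $\ast \to A$ trivially has cofinite zero-set, and the induced fold multiplication agrees with the addition of $A$ by direct application of the summation formula that defines $\Exp_A$ on morphisms. The second composite is the substantive step, but it rests on a single observation: every set decomposes canonically as $X \IsIsomorphicTo \coprod_{x \in X} \{x\}$, so the exponential property of $E$ gives
\[
	E(X) \IsCanonicallyIsomorphicTo \SmallFiniteProduct{x \in X} E(\ast),
\]
which, by the description of the finite product in $\Sets$, is exactly the set of almost-null functions $X \to E(\ast)$, i.e.\ $\Exp_{E(\ast)}(X)$. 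Naturality in $X$ is checked by decomposing a function $f \colon X \to Y$ as the coproduct over $y \in Y$ of the fold maps $\coprod_{x \in f^{-1}(y)} \{x\} \to \{y\}$; applying $E$ and unpacking the monoidal structure via the exponential decomposition, $E(f)$ becomes on each fiber the iterated multiplication $\SmallFiniteProduct{x \in f^{-1}(y)} E(\ast) \to E(\ast)$, which is exactly the summation formula defining $\Exp_{E(\ast)}(f)$.

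For full faithfulness of $A \mapsto \Exp_A$, any monoidal natural transformation $\Exp_A \to \Exp_B$ is determined by its value at $\ast$, which by monoidality is a morphism of commutative monoids $A \to B$; conversely, every monoid morphism extends by postcomposition to a morphism of exponential functors. The only step that requires genuine care is the naturality verification — one must track how $E$ applied to the various fold maps gets identified with iterated multiplication in $E(\ast)$ — but this is dictated by the commuting diagram in the construction preceding the definition of an exponential functor. I do not expect a real obstacle: the theorem is a categorification, in the universe of sets, of the classical fact that a continuous function $f \From \Reals \to \Reals$ satisfying $f(x+y)=f(x)f(y)$ is classified by its value $f(1)$.
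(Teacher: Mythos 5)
Your proposal is correct and follows essentially the same route as the paper: extract the monoid $E(\ast)$ with its fold multiplication, use the decomposition $X \IsIsomorphicTo \coprod_{x \in X}\{x\}$ and the exponential property to identify $E(X)$ with $\SmallFiniteProduct{X} E(\ast) = \Exp_{E(\ast)}(X)$, and identify $E(f)$ fiberwise with the summation formula (the paper is just slightly more explicit that the infinite-fiber case is obtained as a colimit over finite subsets of the finite multiplication maps). No gap.
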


\begin{proof}
	The assignment \( A \mapsto \Exp_A \) is functorial, its
	inverse takes an exponential \( E \) and extracts
	the commutative monoid \( E(\ast) \).
	By construction one has a canonical
	isomorphism \( \Exp_A(\ast) \IsCanonicallyIsomorphicTo A \)
	and the maps
	\( \Exp_A(\emptyset) \to \Exp_A(\ast) \)
	and
	\( \Exp_A(\ast) \times \Exp_A(\ast)
	= \Exp_A(\ast \amalg \ast) \to \Exp_A(\ast) \)
	recover the commutative monoid structure on \( A \).

	Conversely if \( E \) is an exponential, let \( A \) denote
	the commutative monoid \( E(\ast) \).
	Then one has
	\( E(X) \IsIsomorphicTo \SmallFiniteProduct{X} A = \Exp_A(X) \)
	for every set \( X \).
	Lastly, let us show that
	\( E(f) \IsIsomorphicTo \Exp_A(f) \)
	for every function \( f \From X \to Y \).
	The case where \( Y \) is a singleton and \( X \) is finite is
	true by construction and corresponds to
	the monoid structure \( \prod_X A \to A \) of \( A \).
	Taking the colimit over finite subsets gives us the
	case \( \SmallFiniteProduct X A \to A \)
	where \( X \) is infinite.
	Finally, the general case is obtained by
	writting a function \( f \From X \to Y \) as a disjoint
	union \( f_y \From X_y \to \{y\} \) with \( y \in Y \):
	\( E(f) \IsIsomorphicTo \SmallFiniteProduct{y \in Y}
	E(f_y) \IsIsomorphicTo \Exp_A(f_y) \IsIsomorphicTo
	\Exp_A(f) \).
	The natural isomorphism
	\( \Exp_A \IsIsomorphicTo E \)
	we have just described, is monoidal by construction.

	It is straightforward to check that
	\( \Exp_A(\ast) \IsCanonicallyIsomorphicTo A \)
	is natural in \( A \)
	and \( \Exp_{E(\ast)} \IsIsomorphicTo E \)
	is natural in \( E \).
\end{proof}

\begin{example}
	The exponential of base \( \Naturals \)
	\[
		\Exp_\Naturals(X)
		\IsCanonicallyIsomorphicTo
		X^0 \amalg X
		\amalg X^2_{\SymmetricGroup 2}
		\amalg X^3_{\SymmetricGroup 3}
		\amalg \cdots
	\]
	is the exponential functor corresponding
	to the analyst exponential
	of base \( \e \).

	Let \( \IdemTwo \) be the idempotent
	commutative monoid on two elements.
	Then the exponentials of base
	\( \IdemTwo \) and \( \Integers_2 \)
	have identical sets
	\[
		\Exp_{\IdemTwo}(X)
		\IsCanonicallyIsomorphicTo
		\Exp_{\Integers_2}(X)
		\IsCanonicallyIsomorphicTo
		\{ S \subset X
		\mid S \text{ is finite}\}
	\]
	but their monoid structures are different:
	for the exponential of base \( \IdemTwo \), the
	pair \( (\{x\}, \{x\}) \) is sent
	to \( \{x\} \) whereas for that of base
	\( \Integers_2 \), it is sent to
	\( \emptyset \).
\end{example}

\begin{remark}
	One can extend the definition of an
	exponential functor to accommodate any
	monoidal structure on the target.
	For example, the classification theorem above
	also holds for exponential functors
	\( \Vect_\Reals^\oplus
	\to \Vect_\Reals^\otimes \):
	they are equivalent to unital commutative
	\( \Reals \)-algebras.

	The exponential of base
	\( \Reals[\mathrm{X}] \)
	is the symmetric algebra functor.
	The exponential of base
	\( \Reals[\Integers_2] \) is the
	antisymmetric algebra functor.
\end{remark}

\subsection{\emph{The} exponential functor}

As is apparent from the definition of the exponential functors
with bases, there is a preferred exponential, the exponential of base
\( \IdemTwo \), which we shall refer to as \emph{the} exponential
functor, and denote it simply by \( \Exp \).

For any set \( X \), \( \Exp(X) \) can be identified with the set of
all finite subsets of \( X \).
For each function \( f \From X \to Y \), the associated function
\( \Exp(f) \From \Exp(X) \to \Exp(Y) \) sends a finite subset
\( S \subset X \) to \( f(S) \subset Y \).

The exponential can also be described as a particular colimit.
This is the definition one can use to define the exponential in
a general category.

\begin{definition}
	Let \( \Category C \) be a category
	admitting finite products and small colimits.
	Let \( \FinSurj \) denote the category of finite sets and
	surjections.
	Given an object \( X \in \Category C \) and a surjection
	\( \phi \From I \twoheadrightarrow J \)
	between two finite sets, one gets a split monomorphism
	\[
		X^\phi \From X^J \hookrightarrow X^I
	\]
	which means that \( X \) defines a functor
	\( \FinSurj\Op \to \Category C \).

	The exponential functor on \( \Category C \) is the colimit
	\[
		\textstyle
		\Exp(X) \coloneqq \varinjlim_{I \in \FinSurj\Op} X^I
	\]
	of the functor \( X \From \FinSurj\Op \to \Category C \),
	with the convention that \( X^\emptyset \) is the terminal
	object of \( X \), for every \( X \in \Category C \).
\end{definition}

We shall first make a remark about the structure of this colimit
and then show its universal property.

\begin{definition}
	Let \( \PointedOmega \) denote the poset
	\[
		\PointedOmega \coloneqq
	\{0\} \amalg \{1 < 2 < \dots < n < \cdots \}
	\]
	and let denote by
	\( \leq_\ast \) the associated partial order.
\end{definition}

The opposite category of the category of finite sets
and surjections admits a canonical functor to
\( \PointedOmega \) sending a finite set \( I \) to its
cardinal.
Hence the colimit defining \( \Exp \) can be computed in two
steps.

\begin{notation}
	Let
	\[
		\textstyle
		\TruncatedExp n (X) \coloneqq
		\varinjlim_{|I| \leq_\ast n}
			X^I
	\]
	for every natural \( n \).
	For example, if \( X \in \Sets \),
	\( \TruncatedExp n (X) \) is the
	set of all non-empty finite subsets \( S \subset X \) having
	at most \( n \) elements.

	Since \( \PointedOmega \) has an isolated point, we shall let
	\[
		\textstyle
		\Ran(X) \coloneqq \varinjlim_{0 < |I|} X^I
	\]
	be the subobject called the ``Ran space'' of \( X \) 
	in some parts of the literature.
\end{notation}

\begin{remark}
	By construction
	\[
		\textstyle
		\Exp(X) \IsCanonicallyIsomorphicTo
		\varinjlim_{n \in \PointedOmega}
		\TruncatedExp n (X)
		\qand
		\Ran(X) \IsCanonicallyIsomorphicTo
		\varinjlim_{n > 0} \TruncatedExp n (X)
	\]
\end{remark}

\begin{theorem}[(The exponential is an exponential)]
	Let \( \Category C \) be a category with finite products
	and small colimits.
	Assume moreover that \( Y \mapsto X \times Y \)
	commute with all small colimits for every \( X \in \Category C \).
	Then \emph{the} exponential
	\( \Exp \From \Category C \to \Category C \)
	has the canonical structure of an exponential functor.
\end{theorem}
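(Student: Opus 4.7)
My plan is to first establish the symmetric monoidal structure on \( \Exp \) by an explicit decomposition of \( \Exp(X \sqcup Y) \), then iterate to finite coproducts, and finally pass to arbitrary small coproducts via a filtered colimit argument.

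To compute \( \Exp(X \sqcup Y) \), the hypothesis that each \( Y \mapsto X \times Y \) preserves small colimits implies that \( \times \) distributes over coproducts on both sides, so for every finite set \( K \),
\[
	(X \sqcup Y)^K
	\IsCanonicallyIsomorphicTo
	\coprod_{K = A \sqcup B} X^A \times Y^B,
\]
where the coproduct ranges over ordered decompositions, equivalently over functions \( K \to \{0, 1\} \). Plugging this into the defining colimit yields
\[
	\Exp(X \sqcup Y)
	\IsCanonicallyIsomorphicTo
	\varinjlim_{K \in \FinSurj\Op}
	\coprod_{K = A \sqcup B} X^A \times Y^B,
\]
and the heart of the argument is to recognise the right-hand side as a single colimit over \( \FinSurj\Op \times \FinSurj\Op \). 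To do so I would introduce the indexing category \( \Category D \) whose objects are triples \( (K, A, B) \) with \( K = A \sqcup B \) and whose morphisms \( (K, A, B) \to (K', A', B') \) are morphisms \( K \to K' \) in \( \FinSurj\Op \) whose underlying surjection \( K' \twoheadrightarrow K \) pulls the decomposition of \( K \) back to the decomposition of \( K' \), i.e.\ restricts to surjections \( A' \twoheadrightarrow A \) and \( B' \twoheadrightarrow B \). Since \( K \) is recovered from \( (A, B) \) as \( A \sqcup B \) and any such surjection is uniquely determined by its two restrictions, the projection \( (K, A, B) \mapsto (A, B) \) realises \( \Category D \) as canonically equivalent to \( \FinSurj\Op \times \FinSurj\Op \), whence
\[
	\Exp(X \sqcup Y)
	\IsCanonicallyIsomorphicTo
	\varinjlim_{(A, B)} X^A \times Y^B
	\IsCanonicallyIsomorphicTo
	\Exp(X) \times \Exp(Y),
\]
using once more that \( \times \) commutes with colimits separately in each variable. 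A parallel (and easier) computation, noting that \( \emptyset_{\Category C}^K \IsCanonicallyIsomorphicTo \emptyset_{\Category C} \) for non-empty \( K \) while \( \emptyset \) is an isolated object of \( \FinSurj\Op \), gives \( \Exp(\emptyset_{\Category C}) \IsCanonicallyIsomorphicTo \ast_{\Category C} \). These unit and multiplication maps are natural and inherit their coherence from the symmetric monoidal structure on \( \FinSurj\Op \) under disjoint union, so they equip \( \Exp \) with a symmetric monoidal structure, and iteration of the binary case produces \( \Exp(\coprod_{i \in I_0} X_i) \IsCanonicallyIsomorphicTo \prod_{i \in I_0} \Exp(X_i) \) for every finite \( I_0 \).

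To extend to arbitrary small families I would first observe that \( \Exp \) preserves filtered colimits: writing \( X^K \) as an iterated binary product and applying the hypothesis that each \( X \times - \) commutes with small colimits, combined with the cofinality of the diagonal inside any finite power of a filtered category, shows that \( K \)-fold products commute with filtered colimits; commuting this with the outer colimit over \( K \) gives the claim for \( \Exp \). Since any small coproduct is the filtered colimit of its finite subcoproducts, the finite case then assembles into
\[
	\Exp\Big(\coprod_{i \in I} X_i\Big)
	\IsCanonicallyIsomorphicTo
	\varinjlim_{J \subset I \text{ finite}}
	\Exp\Big(\coprod_{j \in J} X_j\Big)
	\IsCanonicallyIsomorphicTo
	\varinjlim_J \prod_{j \in J} \Exp(X_j)
	\IsCanonicallyIsomorphicTo
	\SmallFiniteProduct{i \in I} \Exp(X_i),
\]
the last identification being the very definition of the finite product.

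The main obstacle is the combinatorial bookkeeping required to identify \( \Category D \) with \( \FinSurj\Op \times \FinSurj\Op \) and to check that the resulting rearrangement of the double colimit really matches, at the level of the universal cone, the canonical map produced in the preceding Construction. Once this is in hand, the coherence axioms for the symmetric monoidal structure, the commutation of \( \Exp \) with filtered colimits, and the comparison with the \( \SmallFiniteProduct \) functor are all either direct verifications or standard cofinality arguments.
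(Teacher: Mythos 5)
Your proposal is correct and follows essentially the same route as the paper: distribute \( (-)^K \) over the coproduct, reindex the resulting colimit of coproducts as a colimit over a product of copies of \( \FinSurj\Op \) via a cofinality argument for the discrete (op)fibration of decompositions, and then pass to arbitrary small families using that \( \Exp \) preserves filtered colimits. The only cosmetic difference is that you treat the binary case and iterate, whereas the paper handles a finite family \( \{X_k\}_{k\in K} \) in one pass via the identification \( \Fin^K \IsCanonicallyIsomorphicTo \Slice{\Fin}{K} \); the cofinality step you flag as the main obstacle is exactly the paper's step~3 and works as you describe.
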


\begin{proof}
	For each finite set \( I \), the functor \( X \mapsto X^I \)
	commutes with filtered colimits.
	Hence we have
	\[
		\textstyle
		\varinjlim_{
			\substack{J \subset K \\
				J~\mathrm{finite}
		}} \Exp\left(\coprod_{j \in J} X_j\right)
		\IsCanonicallyIsomorphicTo
		\Exp\big(\coprod_{k \in K} X_k\big).
	\]
	So it shall be enough to show that \( \Exp \) is
	a symmetric monoidal functor.

	Let \( \{X_k\}_{k \in K} \) be a finite family of elements
	of \( \Category C \).
	One has a sequence of canonical isomorphisms
	\begin{align*}
		\textstyle
		\Exp\big(\coprod_{k \in K} X_k\big)
			& = \textstyle \varinjlim_I \big(\coprod_{k \in K} X_k\big)^I
				& &
		\\
			& \IsCanonicallyIsomorphicTo
			\textstyle
			\varinjlim_I \coprod_{(I \to K)}
			\prod_{k \in K} X_k^{I_k}
				& & \text{(by distributivity)}
		\\
			& \IsCanonicallyIsomorphicTo
			\textstyle
			\varinjlim_{(I \to K)}
			\prod_{k \in K} X_k^{I_k}
				& & \text{(by cofinality, see 3)}
		\\
			& \IsCanonicallyIsomorphicTo
			\textstyle
			\varinjlim_{\{I_k\}_{k \in K}}
			\prod_{k \in K} X_k^{I_k}
				& & \text{(by isomorphy, see 2)}
		\\
			& \IsCanonicallyIsomorphicTo
			\textstyle
			\prod_{k \in K} \varinjlim_{I_k} X_k^{I_k}
				& & \text{(by distributivity)}
		\\
			& = \textstyle \prod_{k \in K} \Exp(X_k)
	\end{align*}
	where:
	\begin{enumerate}
		\item
			given a map of finite sets
			\( \phi \From I \to K \), we let
			\( I_k \coloneqq \phi^{-1}(k) \) for each \( k \in K \);
		\item
			the coproduct induces an isomorphism of categories
			\[
				\Fin^K
				=
				\Slice {\Fin} K
			\]
			sending \( \{I_k\}_{k \in K} \)
			to \( \coprod_{k \in K} I_k \to K \);
		\item
			given a small category \( \Category C \) and an object
			\( K \in \Category C \),
			let \( p \From \CoSlice {\Category C} K \to \Category C \)
			denote the forgetful functor.
			Then for every \( x \in \Category C \), the canonical map
			\[
				p^{-1}(x) \longrightarrow \Slice p x
			\]
			is cofinal.
			Moreover the fiber \( p^{-1}(x) \) is discrete.
			Thus for any functor \( F \) with source
			the coslice \( \CoSlice {\Category C} K \), its colimit
			can be computed as
			\[
				\textstyle
				\varinjlim_{(K \to x)} F
				\IsCanonicallyIsomorphicTo
				\varinjlim_x \varinjlim_{\Slice p x} F
				\IsCanonicallyIsomorphicTo
				\varinjlim_x \varinjlim_{p^{-1}(x)} F
				=
				\varinjlim_x \coprod_{(K \to x)} F.
			\]
	\end{enumerate}
	It is straightforward to check that these canonical isomorphisms
	endow \( \Exp \) with the structure of an exponential functor.
\end{proof}

\begin{remark}
	If \( Y \mapsto X \times Y \) only commutes with coproducts, then
	one can show that we still get structural maps
	\(
		\Exp(\amalg_{i \in I} X_i) \to
		\SmallFiniteProduct{i \in I} \Exp(X_i)
	\)
	turning the exponential into what one would call an oplax
	infinitary symmetric monoidal functor.
\end{remark}

\begin{corollary}
	Under the same assumptions, for every \( X \in \Category C \),
	\( \Exp(X) \) is the free idempotent commutative monoid on \( X \).
\end{corollary}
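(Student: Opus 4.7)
The plan is to work directly from the colimit presentation \( \Exp(X) = \varinjlim_{I \in \FinSurj\Op} X^I \). The previous theorem already endows \( \Exp(X) \) with the structure of a commutative monoid (since every exponential functor carries objects of \( \Category C^\sqcup \), which are canonically commutative monoids via the fold map, to commutative monoids in \( \Category C^\times \)), so what remains is to check idempotence and establish the universal property.

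For idempotence I would restrict \( \mu \circ \Delta_{\Exp(X)} \) to each piece of the colimit. Unwinding the canonical isomorphism \( \Exp(X) \times \Exp(X) \IsIsomorphicTo \Exp(X \amalg X) \) constructed in the previous proof, the multiplication restricted to the \( X^I \times X^J \) summand of the product is the structural map \( X^{I \amalg J} \to \Exp(X) \). The diagonal restricted to the \( X^I \) piece is the ordinary diagonal \( X^I \to X^I \times X^I \), which coincides with \( X^\nabla \) for the fold surjection \( \nabla \From I \amalg I \twoheadrightarrow I \). By cocone compatibility, the composite \( X^I \xrightarrow{X^\nabla} X^{I \amalg I} \to \Exp(X) \) equals the structural map \( X^I \to \Exp(X) \), giving \( \mu \circ \Delta = \mathrm{id}_{\Exp(X)} \).

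For the universal property, given an idempotent commutative monoid \( M \) in \( \Category C \) and a map \( f \From X \to M \), I would define \( \tilde f \From \Exp(X) \to M \) by prescribing its restriction to each \( X^I \) to be \( X^I \xrightarrow{f^I} M^I \xrightarrow{\mu^M_I} M \), where \( \mu^M_I \) denotes the canonical \( |I| \)-fold multiplication (with \( \mu^M_\emptyset \) the unit \( \ast \to M \)). The crucial verification is compatibility with the structural maps of the colimit: for a surjection \( \phi \From I \twoheadrightarrow J \), naturality of \( f^{(-)} \) reduces the required identity to \( \mu^M_I \circ M^\phi = \mu^M_J \). I expect this to be the main combinatorial obstacle, proved by factoring any surjection into a composition of transpositions (handled by commutativity) and fold surjections \( \{1,2\} \twoheadrightarrow \{1\} \) (handled by the idempotence identity \( \mu^M_2 \circ \Delta_M = \mathrm{id}_M \)).

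Once this compatibility is established, \( \tilde f \) descends to a well-defined map out of the colimit, and it is a monoid morphism by construction. Uniqueness follows because any monoid morphism \( g \From \Exp(X) \to M \) with \( g \circ \eta = f \) (where \( \eta \From X \to \Exp(X) \) is the inclusion of singletons) must satisfy \( g|_{X^I} = \mu^M_I \circ f^I \); indeed, a colimit argument analogous to the idempotence step shows that the structural map \( X^I \to \Exp(X) \) factors as \( X^I \xrightarrow{\eta^I} \Exp(X)^I \xrightarrow{\mu^{\Exp(X)}_I} \Exp(X) \), and monoid morphisms commute with iterated multiplications.
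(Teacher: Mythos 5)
Your proposal is correct and follows essentially the same route as the paper: idempotence is obtained by identifying the restriction of \( \mu \circ \Delta \) to \( X^I \) with the structural map induced by the fold surjection \( I \sqcup I \twoheadrightarrow I \) and invoking cocone compatibility, and the universal property is obtained by defining the extension on each \( X^I \) as \( \mu^M_I \circ f^I \) and checking compatibility with surjections via commutativity and idempotence. The one step you dismiss as ``by construction'' --- that \( \tilde f \) is a monoid morphism --- is precisely the step the paper verifies with an explicit diagram (precomposing with \( X^I \times X^J \) and using unitality, commutativity and associativity of \( \mu \)), but it is indeed routine, and your added uniqueness argument, which the paper omits, is a welcome completion.
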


\begin{proof}
	Since \( \Exp \) is an exponential functor, \( \Exp(X) \) is
	a commutative monoid as explained earlier.

	It is idempotent for the following reason: for every finite set
	\( I \), the diagram
	\[
		\begin{tikzcd}
			X^I \rar["\text{diagonal}"]
			\dar
				& X^I \times X^I
				\rar
				\dar
					& (X \amalg X)^{I \sqcup I}
					\rar["\text{fold}"]
					\dar
						& X^{I \sqcup I}
						\dar
			\\
			\Exp(X)
			\rar
				& \Exp(X) \times \Exp(X)
				\rar["\IsIsomorphicTo"]
					& \Exp(X \amalg X)
					\rar
						& \Exp(X)
		\end{tikzcd}
	\]
	where all vertical maps are canonical maps,
	commutes.
	Moreover the top composite map equals the map
	\( X^I \to X^{I \sqcup I} \) induced by the fold map
	\( I \sqcup I \to I \). Hence the full top right composite map
	is again the canonical map \( X^I \to \varinjlim_J X^J \).
	Since this is true for every finite \( I \), this shows that
	the bottom composition is the identity of \( \Exp(X) \).

	Let \( (M, \mu) \) be a commutative
	and idempotent monoid in \( \Category C \).
	Assume a given map \( \psi \From X \to M \).
	Then for every finite \( I \), one has a well defined map
	\[
		\begin{tikzcd}
			X^I \rar["\psi^I"]
				& M^I
				\rar["\mu"]
					& M
		\end{tikzcd}
	\]
	because \( \mu \) is associative and commutative.
	Moreover, because \( \mu \) is idempotent, the diagram
	\[
		\begin{tikzcd}[column sep = huge]
			X^I \rar["\psi^I"] \dar[hook, "X^\phi" swap]
				& M^I
				\dar[hook, "M^\phi" swap]
				\ar[dr, "\mu"]
					&
					\\
				X^J \rar["\psi^J"]
				& M^J
				\rar["\mu"]
					& M
		\end{tikzcd}
	\]
	commutes for every \( \phi \From I \twoheadrightarrow J \).
	One thus gets a map \( \tilde{\psi} \From \Exp(X) \to M \)
	extending \( \psi \).
	The map \( \tilde{\psi} \) is obviously unital.

	To show that \( \tilde{\psi} \) is compatible with \( \mu \) is
	to show that
	\[
		\begin{tikzcd}[ampersand replacement=\&, column sep = huge]
			\Exp(X) \times \Exp(X)
			\arrow[r, "\tilde{\psi} \times \tilde{\psi}"]
			\arrow[d, "", swap]
			\& M \times M
			\arrow[d, "\mu"] \\
			\Exp(X)
			\arrow[r, "\tilde{\psi}"]
			\& M
		\end{tikzcd}
	\]
	commutes, which can be done by precomposing with
	\( X^I \times X^J \) for all \( I, J \) finite sets.
	We then only need to show that
	\[
		\begin{tikzcd}[column sep = huge, row sep = large]
			X^I \times X^J
			\dar
			\rar["\psi^I \times \psi^J"]
				& M^I \times M^J
				\dar
				\rar["\mu \times \mu"]
					& M \times M
					\dar[dd, "\mu"]
			\\
			(X \amalg X)^{I \sqcup J}
			\dar["\text{fold}" swap]
			\rar["(\psi \amalg \psi)^{I \sqcup J}"]
				& (M \amalg M)^{I \sqcup J}
				\dar["\text{fold}" swap]
			\\
			X^{I \sqcup J}
			\rar["\psi^{I \sqcup J}"]
				& M^{I \sqcup J}
				\rar["\mu"]
					& M
		\end{tikzcd}
	\]
	commutes.
	The left two squares commute by functorality.
	The right square commutes because \( \mu \) is unital, commutative
	and associative.
\end{proof}

\section{Topologies on the exponential}

In this section we shall review three different topologies on the
set \( \Exp(X) \) of finite subsets \( S \subset X \)
of a topological space \( X \),
with the goal of transforming \( \Exp \) into
an exponential endofunctor of the category of topological spaces.

\subsection{The topological exponential}

As explained in the previous section, the exponential can be computed
with the help of a colimit ranging over the opposite category
of the category of finite
sets and surjections.
Computing the colimit in the category of topological spaces, one
obtains the \emph{topological exponential}, of which we give a simpler
definition.

\begin{definition}[(Topological exponential)]
	The topological exponential
	of a topological space \( X \)
	is the topological space denoted \( \TopExp(X) \) with set of points
	\( \Exp(X) \), the set of finite subsets \( S \subset X \),
	endowed with the finest topology such that
	the canonical maps
	\[
		X^n \longrightarrow \Exp(X)
	\]
	given by sending each tuple
	\( (x_1, \dots, x_n) \) to the subset
	\( [x_1, \dots, x_n] \subset X \) it represents,
	be continuous for every \( n \geq 0 \).
\end{definition}

The topological exponential is used by Beilinson and Drinfeld
\cite[3.4.1]{doi:10.1090/coll/051}
to define factorization algebras on a topological space.
As we shall soon see, the topological exponential suffers one
drawback: it is not an exponential functor because
the functor
\( Y \mapsto X \times Y \) commutes with colimits only when
\( X \) is core-compact.

\subsubsection{First steps towards exponentiability}

\begin{proposition}
	\label{Continuous bijection}
	Let \( \{X_i\}_{i \in I} \) be a small family of topological
	spaces.
	The bijection
	\[
		\textstyle
		\TopExp\big(\coprod_{i \in I} X_i\big)
		\longrightarrow
		\SmallFiniteProduct {i \in I} \TopExp(X_i)
	\]
	is continuous.
\end{proposition}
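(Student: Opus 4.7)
The plan is to invoke the universal property of the topological exponential directly: the topology on \( \TopExp(Y) \) is by definition the finest one making each canonical map \( Y^n \to \TopExp(Y) \) continuous, so it will suffice to check that, for every \( n \geq 0 \), the composition
\[
	\left(\coprod_{i \in I} X_i\right)^n
	\longrightarrow
	\TopExp\left(\coprod_{i \in I} X_i\right)
	\longrightarrow
	\SmallFiniteProduct{i \in I} \TopExp(X_i)
\]
is continuous, where the first arrow is the canonical quotient map defining the topological exponential of the coproduct.

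Since \( \Top \) is distributive, \( \left(\coprod_{i \in I} X_i\right)^n \) is homeomorphic to the coproduct, over all functions \( \phi \From \{1, \ldots, n\} \to I \), of the products \( \prod_{k = 1}^n X_{\phi(k)} \). Continuity from a coproduct reduces to continuity from each summand, so I only need to treat each \( \phi \) separately. Fixing such a \( \phi \) and setting \( K \coloneqq \phi(\{1, \ldots, n\}) \), a finite subset of \( I \), the restriction of the composition to the summand \( \prod_{k = 1}^n X_{\phi(k)} \) factors as
\[
	\prod_{k = 1}^n X_{\phi(k)}
	\IsCanonicallyIsomorphicTo
	\prod_{i \in K} X_i^{\phi^{-1}(i)}
	\longrightarrow
	\prod_{i \in K} \TopExp(X_i)
	\longrightarrow
	\SmallFiniteProduct{i \in I} \TopExp(X_i),
\]
where the first arrow is the homeomorphism rearranging the factors by the fibers of \( \phi \), the second is the product of the canonical continuous maps \( X_i^{\phi^{-1}(i)} \to \TopExp(X_i) \) coming from the definition of the topological exponential, and the third is a structure map of the filtered colimit defining \( \SmallFiniteProduct{i \in I} \TopExp(X_i) \). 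Tracing a tuple \( (x_1, \ldots, x_n) \) through these maps, one lands on the family whose \( i \)-th component is \( \{x_k : k \in \phi^{-1}(i)\} \), matching the bijection in the statement.

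The main (and rather mild) obstacle is notational bookkeeping around the reindexing by \( \phi \); no genuinely topological difficulty arises here, since the argument only uses the universal property of \( \TopExp \) as a quotient of a disjoint union of powers together with the definition of the finite product as a filtered colimit of finite subproducts. The deeper question of whether this continuous bijection is in fact a homeomorphism is a separate matter, which one should expect to fail in general in line with the caveat that \( Y \mapsto X \times Y \) need not commute with the relevant colimits in \( \Top \).
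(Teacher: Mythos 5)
Your argument is correct and follows essentially the same route as the paper's own proof: reduce continuity out of \( \TopExp(\coprod_i X_i) \) to continuity of the precompositions with the canonical maps from the powers of the coproduct, decompose each such power by distributivity into summands indexed by maps \( \phi \) into \( I \), and send each summand through a finite subproduct \( \prod_{i \in K} \TopExp(X_i) \) into the filtered colimit defining the finite product. The extra bookkeeping you supply (the reindexing homeomorphism and the pointwise description of the image) is consistent with, and merely expands, the paper's two-sentence version.
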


\begin{proof}
	To show that this map is continuous it is enough to 
	check that its composition with the projections
	\( (\amalg_{i \in I} X_i)^K \to \TopExp(\amalg_{i \in I} X_i) \)
	is continuous for every finite set \( K \).
	For such a \( K \), the space
	\( (\amalg_{i \in I} X_i)^K \) is a disjoint union of
	spaces of the form \( \prod_{j \in J} X_j^{K_j} \)
	with \( J \subset I \) finite, and each
	projection map
	\( \prod_{j \in J} X_j^{K_j}
	\to \prod_{j \in J} \TopExp(X_j)
	\to \SmallFiniteProduct{i \in I} \TopExp(X_i) \)
	is continuous.
\end{proof}

\begin{lemma}[{\cite[2.4 \& 2.5]{hjm:28-4}}]
	\label{Lemma: perfect}
	Given a separated space \( X \), the projection map
	\[
		X^n \longrightarrow \TopExp(X)
	\]
	factors as a composite
	\[
		X^n \longrightarrow
		\TruncatedTopExp n (X) \subset \TopExp(X)
	\]
	of a closed quotient map followed by a closed embedding,
	for every \( n \geq 0 \).
\end{lemma}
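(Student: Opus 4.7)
The plan is to prove a single stronger statement from which both halves of the factorization fall out at once: that the projection $\pi_n \colon X^n \to \TopExp(X)$ is a closed continuous map whose image is exactly $\TruncatedTopExp n(X)$. Given a closed $C \subseteq X^n$, the task is to show $\pi_n(C)$ is closed in $\TopExp(X)$, which by the defining colimit topology amounts to showing that $\pi_k^{-1}(\pi_n(C))$ is closed in $X^k$ for every $k$, where $\pi_k \colon X^k \to \TopExp(X)$ is the analogous projection.

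A point $y \in X^k$ lies in $\pi_k^{-1}(\pi_n(C))$ iff $\{y_1,\dots,y_k\} = \{x_1,\dots,x_n\}$ for some $x \in C$, which is in turn equivalent to the existence of maps $\phi \colon \{1,\dots,n\} \to \{1,\dots,k\}$ and $\psi \colon \{1,\dots,k\} \to \{1,\dots,n\}$ satisfying $x = \phi^{\ast} y$ and $y = \psi^{\ast} x$; using the first to eliminate $x$, the second becomes $y = (\phi\circ\psi)^{\ast} y$. This exhibits
\[
    \pi_k^{-1}(\pi_n(C)) \;=\; \bigcup_{\phi,\psi} \Big( (\phi^{\ast})^{-1}(C) \;\cap\; \{\, y \in X^k : (\phi\circ\psi)^{\ast} y = y \,\} \Big).
\]
Each factor on the right is closed: the first is the pullback of a closed set by the continuous coordinate-reindexing map $\phi^{\ast}$, and the second is a finite intersection of coordinate diagonals in $X^k$, which are closed because $X$ is separated. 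Since there are only finitely many pairs $(\phi,\psi)$ of functions between the two finite sets, the union is a finite union of closed subsets, hence closed.

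Applied with $C = X^n$, this shows that the image $\TruncatedTopExp n(X)$ of $\pi_n$ is closed in $\TopExp(X)$, so its inclusion is a closed embedding for the subspace topology. The restriction $\pi_n \colon X^n \to \TruncatedTopExp n(X)$ is then a continuous closed surjection, hence a topological quotient. To identify this quotient/subspace topology with the intended colimit topology on $\TruncatedTopExp n(X)$, I would note that every structure map $X^I \to \TruncatedTopExp n(X)$ with $|I| \leq n$ factors through $X^n$ via any surjection $\{1,\dots,n\} \twoheadrightarrow I$; this makes $\pi_n$ a quotient map for the colimit topology as well, so by uniqueness of the quotient topology the two must agree.

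The main obstacle is setting up the key computation correctly: the equivalence relation ``same underlying set'' between tuples of different lengths and with arbitrary coincidence patterns is not governed by a single permutation, so one genuinely needs the two-sided description by a pair $(\phi,\psi)$ of \emph{functions} rather than bijections. Hausdorffness is then precisely what turns each fixed-locus $\{(\phi\circ\psi)^{\ast} y = y\}$ into a closed condition; without it the argument would fail, which is consistent with the separation hypothesis in the statement.
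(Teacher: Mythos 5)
Your proof is correct. Note that the paper does not prove this lemma at all --- it is cited verbatim from Handel [2.4 \& 2.5] --- and your argument is essentially the standard one found there: the saturation $\pi_k^{-1}(\pi_n(C))$ is written as a finite union, over pairs of functions $(\phi,\psi)$ between the index sets, of sets of the form $(\phi^{\ast})^{-1}(C)\cap\{y : (\phi\circ\psi)^{\ast}y=y\}$, with Hausdorffness making the coincidence loci closed; your final step identifying the colimit, quotient, and subspace topologies on $\TruncatedTopExp n(X)$ is also handled correctly.
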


\begin{lemma}
	\label{lemma: truncated is ok}
	For any small family of separated topological
	spaces
	\( \{X_i\}_{i \in I} \) and every
	natural \( n \), the canonical map
	\[
		\textstyle
		\SmallFiniteProduct{i \in I}
		\TruncatedTopExp n (X_i)
		\longrightarrow \TopExp\big(\coprod_{i \in I} X_i\big)
	\]
	is continuous.
\end{lemma}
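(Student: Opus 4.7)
The plan is to reduce continuity to something that can be checked on a very concrete source. By definition, the finite product is the filtered colimit \( \varinjlim_{J \subset I \text{ finite}} \prod_{j \in J} \TruncatedTopExp n(X_j) \), so its universal property reduces the problem to showing that, for every finite subset \( J \subset I \), the composite
\[
	\prod_{j \in J} \TruncatedTopExp n(X_j)
	\longrightarrow
	\SmallFiniteProduct{i \in I} \TruncatedTopExp n(X_i)
	\longrightarrow
	\TopExp\Big(\coprod_{i \in I} X_i\Big),
\]
which sends \( (S_j)_{j \in J} \) to \( \bigsqcup_{j \in J} S_j \), is continuous.

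To establish this, I would precompose with the projections of Lemma \ref{Lemma: perfect}. Each map \( X_j^n \to \TruncatedTopExp n(X_j) \) is a closed surjection with finite fibres (the \( \SymmetricGroup n \)-orbits), hence a proper map. Since properness is stable under finite products, the induced map \( \prod_{j \in J} X_j^n \to \prod_{j \in J} \TruncatedTopExp n(X_j) \) is again a closed surjection, and in particular a quotient map. It therefore suffices to verify continuity of the further composite \( \prod_{j \in J} X_j^n \to \TopExp(\coprod_i X_i) \).

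But this final composite sends a tuple \( (x_{j,k})_{j \in J,\,1 \leq k \leq n} \) to the finite subset \( \{x_{j,k}\} \subset \coprod_i X_i \), and it is nothing but the restriction of the canonical map \( \big(\coprod_i X_i\big)^{|J|n} \to \TopExp(\coprod_i X_i) \) to the open-and-closed subspace \( \prod_{j \in J} X_j^n \) indexed by the obvious map \( J \times \{1,\dots,n\} \to J \subset I \). This restriction is continuous directly from the defining universal property of \( \TopExp \). The only delicate point of the argument is that a product of quotient maps is not a quotient map in general; this is precisely where the \emph{closed} strengthening in Lemma \ref{Lemma: perfect} — made available by the separation hypothesis on the \( X_i \) — intervenes to rescue the plan via properness.
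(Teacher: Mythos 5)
Your proof is correct and follows essentially the same route as the paper's: reduce to a finite index set via the colimit description of the finite product, use the separation hypothesis and Lemma \ref{Lemma: perfect} to see that the product of the projections \( \prod_j X_j^n \to \prod_j \TruncatedTopExp n(X_j) \) is perfect and hence a quotient map, and then check continuity on the source \( \prod_j X_j^n \) directly from the defining property of \( \TopExp \). The only difference is cosmetic (you phrase the key stability fact in terms of properness of finite-fibred closed surjections rather than perfectness), so nothing further is needed.
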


\begin{proof}
	By definition of the finite product, it is enough to show it
	for \( I \) finite.
	Since each \( X_i \) is separated, the quotient map
	\( X_i^n \twoheadrightarrow \TruncatedTopExp n (X_i) \)
	is perfect \UnskipRef{Lemma: perfect},
	and hence a perfect map.
	Thus the product
	\( \prod_{i \in I} X_i^n
	\to \prod_{i \in I} \TruncatedTopExp n (X_i) \)
	is again a perfect map, and, in particular, a quotient map.
	Then, the map \( \prod_{i \in I} X_i^n
	\to (\amalg_{i \in I}X_i)^{I \sqcup \dots \sqcup I}
	\to \TopExp(\amalg_{i \in I} X_i) \)
	is continuous and by the previous
	observation, factors as a continuous map through the quotient
	\( \prod_{i \in I}\TruncatedTopExp n(X_i) \).
\end{proof}

\subsubsection{The topological exponential is not an exponential}

\begin{lemma}
	The topological exponential \( \TopExp(\Circle) \) contains
	a copy of the infinite bouquet of circles
	\( \vee^{\OrdinalOmega} \Circle \).
\end{lemma}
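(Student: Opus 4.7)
The plan is to exhibit, for each $m \geq 1$, a topologically embedded circle $C_m \subset \TopExp(\Circle)$ through a common point $\{\ast\}$, pairwise disjoint elsewhere, whose union carries the wedge topology as a subspace of $\TopExp(\Circle)$.

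Fix $\ast \in \Circle$. Let $C_1 \coloneqq \TruncatedTopExp 1(\Circle) \setminus \{\emptyset\}$ be the circle of singletons, and define $\gamma_2(z) \coloneqq \{\ast, z\}$. For $m \geq 3$, pick $m-2$ small continuous rotational perturbations $f_i(z) \coloneqq z \exp(\mathrm{i} \epsilon_i \theta(z))$, where $\theta \From \Circle \to [0, \pi]$ is the geodesic distance to $\ast$ (so $\theta^{-1}(0) = \{\ast\}$) and $0 < \epsilon_1 < \cdots < \epsilon_{m-2}$ are sufficiently small; set $\gamma_m(z) \coloneqq \{\ast, z, f_1(z), \dots, f_{m-2}(z)\}$. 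A direct check shows $|\gamma_m(z)| = m$ for every $z \neq \ast$ (the pairs $\{\ast, z\}$, $\{\ast, f_i(z)\}$, $\{z, f_i(z)\}$ and $\{f_i(z), f_j(z)\}$ are all distinct) and that $\gamma_m$ is injective, as any coincidence $\gamma_m(z) = \gamma_m(z')$ with $z \neq z'$ would force $z' = f_k(z)$ and $z = f_j(z')$ for some $j, k$, whence $\epsilon_k \theta(z) + \epsilon_j \theta(z') \in 2\pi \Integers$---impossible for small positive $\epsilon_i$ and $z, z' \neq \ast$. Since each $\TruncatedTopExp N(\Circle)$ is Hausdorff (the quotient $\Circle^N \twoheadrightarrow \TruncatedTopExp N(\Circle)$ being perfect by \UnskipRef{Lemma: perfect}) and $\TopExp(\Circle)$ is their colimit along closed embeddings, $\TopExp(\Circle)$ is itself Hausdorff; hence each $\gamma_m$ is a closed embedding and $C_m \coloneqq \gamma_m(\Circle) \cong \Circle$. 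As $C_m \setminus \{\{\ast\}\}$ lies entirely in the pure $m$-stratum of $\TopExp(\Circle)$, the stratification by cardinality gives $C_m \cap C_n = \{\{\ast\}\}$ for $m \neq n$.

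To identify the subspace topology on $Y \coloneqq \bigcup_m C_m$ with the wedge topology, assume $K \subseteq Y$ meets each $C_m$ in a closed subset of $C_m$. Since $C_m \cap \TruncatedTopExp N(\Circle) = C_m$ for $m \leq N$ and $\{\{\ast\}\}$ for $m > N$, we compute
\[
  K \cap \TruncatedTopExp N(\Circle) = \bigcup_{m \leq N} (K \cap C_m),
\]
a finite union of closed subsets of $\TruncatedTopExp N(\Circle)$, hence closed. As $\TopExp(\Circle)$ carries the final topology with respect to the inclusions $\TruncatedTopExp N(\Circle) \hookrightarrow \TopExp(\Circle)$, the set $K$ is closed in $\TopExp(\Circle)$ and a fortiori in $Y$; the reverse direction is automatic. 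Therefore $Y$ is homeomorphic to $\vee^{\OrdinalOmega} \Circle$. The primary obstacle lies in the construction of the $\gamma_m$'s for $m \geq 3$: one must simultaneously prevent cardinality drops away from the basepoint and coincidences between distinct parameter values, which the perturbation scheme with distinct small scales $\epsilon_i$ precisely achieves.
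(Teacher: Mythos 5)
Your proof is correct, and it shares the paper's overall skeleton --- produce, for each $m$, a circle of configurations through the common point $\{\ast\}$ inside the truncation $\TruncatedTopExp m (\Circle)$, with the petals pairwise disjoint away from the basepoint, and then use that \( \TopExp(\Circle) \) carries the colimit topology of the \( \TruncatedTopExp N (\Circle) \) to identify the subspace topology on the union with the wedge topology --- but the construction of the petals is genuinely different. The paper realizes its \( m \)-th petal as the image of a linear circle in the torus \( \Torus m \) under the closed projection \( \Torus m \to \TruncatedTopExp m (\Circle) \), and injectivity of the resulting map from the finite wedge depends delicately on the choice of winding vectors, since distinct circles in the torus can have coinciding images in the exponential. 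You instead build the \( m \)-th petal directly as an explicit loop \( z \mapsto \{\ast, z, f_1(z), \dots, f_{m-2}(z)\} \) whose points have exact cardinality \( m \) away from the basepoint; this buys you two things: the petals are disjoint off \( \{\ast\} \) for free by the cardinality stratification, and the wedge-topology identification reduces to the single computation \( K \cap \TruncatedTopExp N (\Circle) = \bigcup_{m \leq N} (K \cap C_m) \). The perturbation scheme with distinct small scales \( \epsilon_i \) does verifiably prevent both cardinality drops and coincidences between parameter values, as you check. One small point of hygiene: deducing Hausdorffness of \( \TopExp(\Circle) \) from ``colimit of closed embeddings of Hausdorff spaces'' is not a valid implication in general (it holds for \( \TOne \) and for normal spaces); here it causes no harm because each \( \TruncatedTopExp N (\Circle) \) is compact Hausdorff, hence normal, but it is cleaner to note that \( \gamma_m \) already lands in the compact Hausdorff space \( \TruncatedTopExp m (\Circle) \), which is closed in \( \TopExp(\Circle) \)\UnskipRef{Lemma: perfect}, so that \( \gamma_m \) is a closed embedding without invoking any separation property of the colimit.
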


\begin{proof}
	We shall build a sequence of closed embeddings
	\[
		\begin{tikzcd}
			\Circle 
			\rar[hook]
			\dar[hook]
				& \dots
				\rar[hook]
				\dar[hook]
					& \vee^n_{i = 1} \Circle
					\rar[hook]
					\dar[hook]
						& \dar[hook] \dots \\
			\TruncatedTopExp 1(\Circle)
			\rar[hook]
				& \dots
				\rar[hook]
					& \TruncatedTopExp n(\Circle)
					\rar[hook]
						& \dots
		\end{tikzcd}
	\]
	which shall lead to a closed embedding
	\( \vee^\OrdinalOmega \Circle
	\hookrightarrow \TopExp(\Circle) \).
	For this, we embed two circles
	into the torus \( \Torus 2 \)
	via the vectors \( (0,1) \) and \( (1,1) \), three
	circles in \( \Torus 3 \)
	via the vectors \( (0,0,1) \), \( (0,1,1) \) and \( (1,1,1) \)
	etc.
	This defines a compatible family of closed embeddings
	\( \vee_{i =1}^n \Circle \hookrightarrow \Torus n \).
	Since moreover the projection map
	\( \Torus n \to \TruncatedTopExp n (\Circle) \) is closed,
	we get continuous closed maps
	\( \vee_{i=1}^n \Circle \to \TruncatedTopExp n (\Circle) \).
	By construction, they are injective and fit as expected
	in the diagram of closed embeddings above.
\end{proof}

\begin{theorem}[(The topological exponential is not an exponential)]
	\label{theorem: not an exponential}
	The canonical continuous bijection
	\[
		\TopExp\left(\Rationals \amalg \Circle\right)
		\longrightarrow
		\TopExp(\Rationals) \times \TopExp\left(\Circle\right)
	\]
	is not a homeomorphism.
\end{theorem}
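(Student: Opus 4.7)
I plan to exhibit the failure of the inverse bijection to be continuous at the point \( (\{q_0\}, \{e\}) \), for some rational \( q_0 \) and the basepoint \( e \) of the wedge \( \vee^\OrdinalOmega \Circle \hookrightarrow \TopExp(\Circle) \) given by the preceding lemma. The ultimate obstruction is the non-core-compactness of \( \Rationals \).

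Using the decomposition of \( (\Rationals \amalg \Circle)^n \) into the clopen strata \( \Rationals^a \times \Circle^b \) (with \( a+b = n \)) and the compatibility of symmetric-group quotients with this decomposition, one can identify
\[
	\TopExp(\Rationals \amalg \Circle) = \varinjlim_{(a, b)} \TruncatedTopExp a(\Rationals) \times \TruncatedTopExp b(\Circle),
\]
while \( \TopExp(\Rationals) \times \TopExp(\Circle) \) is the external product of the two one-variable colimits. The map of the theorem is precisely the canonical comparison between colimit-of-products and product-of-colimits, so its failure to be a homeomorphism reduces to showing that the colimit topology is strictly finer than the product topology—the signature failure of products with non-core-compact spaces to preserve colimits.

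To extract an explicit witness of this strict inequality, fix a decreasing sequence \( (U_n)_{n \ge 1} \) of open neighborhoods of \( q_0 \) in \( \Rationals \) with \( \bigcap_n U_n = \{q_0\} \) (possible because singletons are not open in \( \Rationals \)), and shrinking open arcs \( V_n \) around \( e \) in the \( n \)-th petal of \( \vee^\OrdinalOmega \Circle \). The subset \( \bigcup_{n \ge 1} (U_n \times V_n) \subseteq \Rationals \times \vee^\OrdinalOmega \Circle \) is open in the colimit \( \varinjlim_n (\Rationals \times \vee^n \Circle) \), but admits no basic product neighborhood of \( (q_0, e) \): such a neighborhood \( U \times V \) would intersect every petal nontrivially, forcing \( U \subseteq U_n \) for all \( n \) and hence \( U \subseteq \bigcap_n U_n = \{q_0\} \), which is not open in \( \Rationals \).

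The main obstacle will be realising this ``non-product'' open set as a genuine open subset \( W \) of \( \TopExp(\Rationals \amalg \Circle) \), since the naive complement-of-closed construction has \( \{q_0, e\} \) sitting in the closure of the sets one would like to exclude. I would address this by defining the preimage of \( W \) stratum by stratum in each \( (\Rationals \amalg \Circle)^k \)—making independent choices on the clopen strata with varying numbers of rational coordinates—and then verifying compatibility across the diagonal identifications using the closedness of the quotient maps \( X^n \to \TruncatedTopExp n(X) \) for separated \( X \) established in Lemma~\ref{Lemma: perfect}. The matching argument then produces, for every basic product neighborhood \( U' \times V' \) of \( (\{q_0\}, \{e\}) \), a witness configuration in \( U' \times V' \) whose image lies outside \( W \), forcing the image of \( W \) to be a non-open set containing \( (\{q_0\}, \{e\}) \).
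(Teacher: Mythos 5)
Your reduction to the comparison map
\( \varinjlim_{0<n<\OrdinalOmega} \Rationals \times \vee_{i=1}^n \Circle \to \Rationals \times \vee^{\OrdinalOmega}\Circle \)
is sound and is in fact exactly the paper's reduction (the paper then simply cites \cite[3.2]{arXiv:2105.12220} for the final step). The gap is in your explicit witness. The set \( W=\bigcup_{n}(U_n\times V_n) \), with \( V_n \) an arc around \( e \) inside the \( n \)-th petal, is \emph{not} open in the colimit topology: its trace on \( \Rationals\times\vee^m\Circle \) is \( \bigcup_{n\le m}(U_n\times V_n) \), and at a point \( (q,e) \) with \( q\in U_n\setminus U_{n+1} \) (\( n<m \)) every neighborhood contains points \( (q,x) \) with \( x\in\mathrm{petal}_{n+1}\setminus\{e\} \) close to \( e \), which lie outside the union since \( q\notin U_{n+1} \). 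So \( W \) is not even a neighborhood of most of \( \Rationals\times\{e\} \) in the strata, and your openness claim fails. There is a structural reason you cannot patch this by tinkering with the \( U_n \) and \( V_n \): your argument only uses that \( \{q_0\} \) is not open in \( \Rationals \) (first countability plus no isolated points). Those hypotheses hold verbatim for \( \Reals \), yet for \( \Reals \) the comparison map \emph{is} a homeomorphism, because \( \Reals \) is core-compact and \( \Reals\times(-) \) preserves the colimit. Any correct proof must therefore invoke the failure of local compactness of \( \Rationals \), not merely the non-openness of singletons.

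A repair along your lines: since \( \Rationals \) is nowhere locally compact, each \( U_n \) contains a sequence \( (q_{n,k})_{k} \) with no convergent subsequence (e.g.\ converging to an irrational), hence closed and discrete in \( \Rationals \). Pick \( x_{n,k}\to e \) in \( \mathrm{petal}_n\setminus\{e\} \) and set \( F=\bigcup_n F_n \) with \( F_n=\{(q_{n,k},x_{n,k}):k\ge1\} \). Each \( F_n \) is closed in \( \Rationals\times\mathrm{petal}_n \) (a limit point would force a convergent subsequence of \( (q_{n,k})_k \)) and misses \( \Rationals\times\{e\} \), so \( F\cap(\Rationals\times\vee^m\Circle)=\bigcup_{n\le m}F_n \) is closed and \( F \) is closed in the colimit; yet every product neighborhood \( U\times V \) of \( (q_0,e) \) contains some \( U_N \) and some \( x_{N,k} \), hence meets \( F \). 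The complement of \( F \) is then a genuine colimit-open witness. Your last paragraph (lifting the witness from the wedge model into \( \TopExp(\Rationals\amalg\Circle) \)) is handled more economically as in the paper: the tori embeddings give \emph{closed} embeddings \( \Rationals\times\vee_{i=1}^n\Circle\hookrightarrow\TruncatedTopExp{n+1}(\Rationals\amalg\Circle) \), so the colimit subspace topology is inherited directly and no stratum-by-stratum extension of \( W \) is needed.
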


\begin{proof}
	Using the same tori embeddings as in the previous
	lemma, one can fit a copy of
	\( \Rationals \times \vee_{i = 1}^n \Circle \)
	in \( \TruncatedTopExp {n+1}(\Rationals \amalg \Circle) \)
	for every \( n \geq 1 \).
	One can then check that the continuous bijection
	\( \TopExp(\Rationals \amalg \Circle) \to
	\TopExp(\Rationals) \times \TopExp(\Circle) \)
	restricts to the continuous bijection
	\( \varinjlim_{0 < n < \OrdinalOmega}  \Rationals \times
		\vee_{i = 1}^n \Circle
		\to \Rationals \times \vee^{\OrdinalOmega} \Circle
	\)
	which is not a homeomorphism
	\cite[3.2]{arXiv:2105.12220}.
\end{proof}

\subsubsection{The topological exponential is almost an exponential}

As we have just seen, the canonical continuous bijection
\( \TopExp(X \amalg Y) \to \TopExp(X) \times \TopExp(Y) \) is not
always a homeomorphism.
However when \( X \) and \( Y \) are separated,
its inverse is still sequentially continuous.

\begin{remark}[(Converging sequences in a colimit topology)]
	\label{Remark: converging sequence in colimit topology}
	Let \( Z_0 \hookrightarrow
	\cdots \hookrightarrow Z_p \hookrightarrow \cdots \)
	be a sequence of closed embeddings
	between \( \TOne \) topological
	spaces and let \( Z \) denote its colimit.
	Then every morphism \( K \to Z \) with \( K \) compact
	factors through one \( Z_p \subset Z \)
	\cite[2.4.2]{isbn:9780821843611}.
	More generally this is true if \( Z \) is the colimit of an
	ordinal sequence of closed embeddings.

	As a consequence, if \( X \) is separated, a sequence
	\( (S_n)_{n \in \Naturals} \) in \( \TopExp(X) \),
	converges only if the sequence of cardinals
	\( |S_n|_{n < \OrdinalOmega} \) is bounded.
\end{remark}

\begin{proposition}
	\label{Proposition: almost an exponential}
	Let \( \{X_i\}_{i \in I} \) be a small family of separated spaces,
	the canonical bijection
	\[
		\textstyle
		\SmallFiniteProduct{i \in I} \TopExp(X_i)
		\longrightarrow \TopExp\big(\coprod_{i \in I} X_i\big)
	\]
	is sequentially continuous.
\end{proposition}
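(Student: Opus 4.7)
The plan is to chase a convergent sequence $(s_n)$ through the two filtered colimits at play: the colimit over finite subsets of $I$ defining the finite product on the source, and the cardinality filtration of $\TopExp(X_j)$ in each coordinate. The overall strategy is to reduce to a single finite stage, and then apply Lemma~\ref{lemma: truncated is ok}.

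First I would observe that in each $\TopExp(X_i)$ the basepoint $\emptyset$ is an \emph{open} point: its preimage under $X_i^0 \to \TopExp(X_i)$ is the whole of $X_i^0$, and its preimage under $X_i^n \to \TopExp(X_i)$ for $n \geq 1$ is empty. Hence, by the example following the definition of the finite product, a neighbourhood basis of any point $s \in \SmallFiniteProduct{i \in I} \TopExp(X_i)$ with support $J_s = \{i : s^{(i)} \neq \emptyset\}$ (automatically finite) consists of open products indexed by $J_s$ and embedded via the basepoint in the remaining coordinates. Convergence $s_n \to s$ thus forces the support of $s_n$ to lie inside $J_s$ for all large $n$; enlarging $J_s$ by the supports of the finitely many exceptional terms, the whole sequence factors through a single finite product $\prod_{j \in J_0} \TopExp(X_j)$ for some finite $J_0 \subset I$.

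Next, convergence in $\prod_{j \in J_0} \TopExp(X_j)$ is coordinatewise, so each projection $(s_n^{(j)})_n$ converges in $\TopExp(X_j)$. Because $X_j$ is separated, Lemma~\ref{Lemma: perfect} presents $\TopExp(X_j)$ as the colimit of a sequence of closed embeddings $\TruncatedTopExp n (X_j) \hookrightarrow \TruncatedTopExp{n+1}(X_j)$ between $\TOne$ spaces, and Remark~\ref{Remark: converging sequence in colimit topology} bounds the cardinality of the $s_n^{(j)}$ by some integer $N_j$. Setting $N = \max_{j \in J_0} N_j$ traps the entire sequence inside $\prod_{j \in J_0} \TruncatedTopExp N (X_j)$, where the map to $\TopExp(\coprod_{i \in I} X_i)$ is continuous by Lemma~\ref{lemma: truncated is ok}, and hence the image sequence converges.

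The main obstacle, and the only step where sequential rather than full continuity is essential, is the initial reduction: a convergent sequence could a priori spread its support over countably many coordinates of $I$, and it is precisely the openness of the pointings $\emptyset \in \TopExp(X_i)$ that forces the support to stabilise at a single finite stage of the colimit $\SmallFiniteProduct{i \in I}$.
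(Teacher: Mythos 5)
Your proof is correct and follows essentially the same route as the paper's: reduce the convergent sequence to a finite subfamily of \( I \), bound the cardinality in each coordinate via Remark~\ref{Remark: converging sequence in colimit topology}, and conclude with Lemma~\ref{lemma: truncated is ok}. The only (harmless) difference is in the first reduction, where you exploit the openness of the basepoints \( \emptyset \in \TopExp(X_i) \) to stabilise the support directly, whereas the paper first passes to countable \( I \) and then applies the same colimit remark to the increasing union of partial products.
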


\begin{proof}
	Let \( S \) be a sequence in \( \TopExp(\amalg_{i \in I} X_i) \).
	Since \( S_n \) is a finite subset of \( \amalg_{i \in I} X_i \)
	for each natural number \( n \),
	the union \( \cup_{n < \OrdinalOmega} S_n \)
	intersects only a countable number of \( X_i \),
	and we can thus reduce to the case where \( I \) is countable.
	Given a sequence \( X_0, X_1, \dots \) of separated spaces,
	the sequence
	\[
		\TopExp(X_0) \hookrightarrow
		\TopExp(X_0) \times \TopExp(X_1)
		\hookrightarrow
		\TopExp(X_0) \times \TopExp(X_1) \times \TopExp(X_2)
		\hookrightarrow
		\dots
	\]
	is made of closed embeddings between separated spaces.
	Thus \( \cup_{n < \OrdinalOmega} S_n \) intersects only a finite
	number of \( X_i \)
	\UnskipRef{Remark: converging sequence in colimit topology}.

	Then, we only need to consider the case of two separated spaces
	\( X \) and \( Y \).
	In that case,
	the sequence \( S \) is then comprised of a pair of two sequences
	\( S(X) \) and \( S(Y) \).
	Because \( \TopExp(X) \) is a union of closed embeddings between
	separated spaces, \( S(X) \) is bounded in cardinality
	\UnskipRef{Remark: converging sequence in colimit topology}.
	The same is true for \( S(Y) \).
	We conclude using that
	\(
		\TruncatedTopExp n (X) \times \TruncatedTopExp n (Y)
		\longrightarrow \TopExp(X \amalg Y)
	\)
	is continuous for every \( n \)
	\UnskipRef{lemma: truncated is ok}.
\end{proof}

\subsubsection{The topological exponential is a restricted exponential}
 
As we have explained earlier,
the topological exponential is not an exponential
functor because the functor \( Y \mapsto X \times Y \) does not
commute with colimits in general, for a given \( X \).
One might ask whether the exponential property could still hold
if restricted to core-compact spaces, i.e., the spaces \( X \) for
which \( Y \mapsto X \times Y \) commutes with colimits.
The answer to this question is non-obvious as \( \TopExp(X) \) is
usually not going to be core-compact, even when \( X \) is.

\begin{proposition}
	Let \( \{X_i\}_{i \in I} \) be a small family of separated and
	core-compact topological spaces.
	The canonical bijection
	\[
		\textstyle
		\SmallFiniteProduct{i \in I} \TopExp(X_i)
		\longrightarrow
		\TopExp\big(\coprod_{i \in I} X_i\big)
	\]
	is a homeomorphism.
\end{proposition}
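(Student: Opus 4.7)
The plan is to complement \UnskipRef{Continuous bijection}, which gives continuity of the direction \( \TopExp(\coprod_{i \in I} X_i) \to \SmallFiniteProduct{i \in I} \TopExp(X_i) \), by showing its inverse set-bijection is also continuous. Since \( \SmallFiniteProduct{i \in I} \TopExp(X_i) \) is by definition a colimit, over the finite subsets \( J \subset I \), of the ordinary finite products \( \prod_{j \in J} \TopExp(X_j) \), the universal property of that colimit reduces the problem to showing that for every finite \( J \subset I \), the canonical map \( \prod_{j \in J} \TopExp(X_j) \to \TopExp(\coprod_{i \in I} X_i) \) is continuous. This map factors through \( \TopExp(\coprod_{j \in J} X_j) \) by functoriality of \( \TopExp \), so it is enough to treat the case where \( I \) itself is finite.

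With \( I \) finite, the plan is to express \( \prod_{i \in I} \TopExp(X_i) \) as the colimit \( \varinjlim_{\vec n} \prod_{i \in I} \TruncatedTopExp{n_i}(X_i) \), and then apply \UnskipRef{lemma: truncated is ok} level by level. Each \( X_i \) being separated and core-compact is locally compact Hausdorff, so each power \( X_i^{n_i} \) is too; by \UnskipRef{Lemma: perfect} the perfect quotient \( X_i^{n_i} \twoheadrightarrow \TruncatedTopExp{n_i}(X_i) \) has locally compact Hausdorff image, and finite products of locally compact Hausdorff spaces remain locally compact Hausdorff. Moreover the filtration \( \TopExp(X_i) = \varinjlim_{n_i} \TruncatedTopExp{n_i}(X_i) \) presents \( \TopExp(X_i) \) as a sequential colimit of closed embeddings between locally compact Hausdorff spaces (again by \UnskipRef{Lemma: perfect}).

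The main obstacle is the topological fact that finite products commute with sequential colimits of closed embeddings between locally compact Hausdorff spaces, which will give the identity \( \prod_{i \in I} \TopExp(X_i) = \varinjlim_{\vec n} \prod_{i \in I} \TruncatedTopExp{n_i}(X_i) \) as topological spaces. This is a classical result, proved inductively by a ``fattening'' argument producing nested compact neighbourhoods at each stage, in the spirit of the standard proof that products of CW complexes are CW; one cannot simply invoke core-compactness of \( \TopExp(X_i) \) since, as observed earlier in the section, these spaces are typically not core-compact. Once this is granted, \UnskipRef{lemma: truncated is ok} (applied with the common value \( N \coloneqq \max_i n_i \), along the closed inclusions \( \TruncatedTopExp{n_i}(X_i) \hookrightarrow \TruncatedTopExp{N}(X_i) \)) shows that each \( \prod_{i \in I} \TruncatedTopExp{n_i}(X_i) \to \TopExp(\coprod_{i \in I} X_i) \) is continuous, so the colimit map \( \prod_{i \in I} \TopExp(X_i) \to \TopExp(\coprod_{i \in I} X_i) \) is continuous, finishing the proof.
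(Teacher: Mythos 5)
Your proposal is correct and follows essentially the same route as the paper: reduce to finite \( I \) via the colimit definition of the finite product, identify each \( \TruncatedTopExp n (X_i) \) as locally compact Hausdorff through the perfect quotient \( X_i^n \twoheadrightarrow \TruncatedTopExp n (X_i) \), commute the finite product with the sequential colimit of closed embeddings, and conclude level by level with \UnskipRef{lemma: truncated is ok}. The only difference is cosmetic: the paper cites \cite{arXiv:2105.12220} for the product--colimit commutation where you sketch the classical fattening argument, and it uses the diagonal sequence \( \varinjlim_n \) rather than your multi-index \( \varinjlim_{\vec n} \), which are cofinal in one another.
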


\begin{proof}
	We only need to show that the above map is continuous
	\UnskipRef{Continuous bijection}.
	By definition of the finite product, we can reduce to the case
	of a finite \( I \).
	Since each \( X_i \) is separated, the projection map
	\( X_i^n \to \TruncatedTopExp n (X_i) \) is a perfect map
	and thus \( \TruncatedTopExp n (X_i) \) is core-compact.
	Because sequential unions of core-compact spaces commute with
	finite products
	\cite{arXiv:2105.12220}, one has
	canonical homeomorphisms
	\[
		\textstyle
		\prod_{i \in I} \TopExp(X_i)
		\IsCanonicallyIsomorphicTo
		\prod_{i \in I} \varinjlim_{n \in \PointedOmega}
		\TruncatedTopExp n (X_i)
		\IsCanonicallyIsomorphicTo
		\varinjlim_{n \in \PointedOmega} \prod_{i \in I}
		\TruncatedTopExp n (X_i)
	\]
	and the map
	\[
		\textstyle
		\prod_{i \in I} \TopExp(X_i)
		\IsCanonicallyIsomorphicTo
		\varinjlim_{n \in \PointedOmega} \prod_{i \in I}
		\TruncatedTopExp n (X_i)
		\longrightarrow \TopExp\big(\amalg_{i \in I} X_i\big)
	\]
	is continuous, as a colimit of continuous maps
	\UnskipRef{lemma: truncated is ok}.
\end{proof}

\begin{corollary}
	Let \( X \) be a separated and core-compact topological
	space. Then
	\( \TopExp(X) \) is the free idempotent commutative topological
	monoid on \( X \).
\end{corollary}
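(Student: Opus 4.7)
The plan is to re-run the proof of the set-level corollary, this time internally to the category of topological spaces, with the preceding proposition serving as the crucial ingredient that lets the categorical argument go through.

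First, I would verify that \( \TopExp(X) \) carries the structure of a topological idempotent commutative monoid. Since \( X \) is separated and core-compact, so is \( X \amalg X \), and the preceding proposition gives a homeomorphism \( \TopExp(X) \times \TopExp(X) \IsIsomorphicTo \TopExp(X \amalg X) \). Composing with \( \TopExp \) of the fold map \( X \amalg X \to X \) yields a continuous multiplication \( \mu_{\TopExp(X)} \); the unit \( \ast \IsIsomorphicTo \TopExp(\emptyset) \to \TopExp(X) \) is the empty subset. Associativity, commutativity, unitality and idempotency are then verified by exactly the diagrammatic arguments of the set-level corollary, since each diagram is a finite combinatorial identity of structure maps that holds in any category where the exponential property holds, and here it does on the (separated, core-compact) objects involved.

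Second, I would establish the universal property. Given an idempotent commutative topological monoid \( (M, \mu) \) and a continuous map \( \psi \From X \to M \), I build \( \tilde\psi \From \TopExp(X) \to M \) as follows: on each \( X^n \), define
\[
	\tilde\psi_n \From X^n \xrightarrow{\psi^n} M^n \xrightarrow{\mu} M,
\]
which is continuous. By associativity, commutativity and idempotency of \( \mu \), these maps are compatible with the surjections \( X^n \twoheadrightarrow X^m \) coming from surjections \( [n] \twoheadrightarrow [m] \), so they descend to a single map out of the colimit defining \( \TopExp(X) \). Continuity of \( \tilde\psi \) is then immediate from the definition of the topology on \( \TopExp(X) \) as the finest making all the \( X^n \to \TopExp(X) \) continuous. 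Uniqueness and monoid compatibility of \( \tilde\psi \) are transported from the set-level corollary by observing that the compatibility square for \( \mu \) can be checked after precomposition with each \( X^I \times X^J \to \TopExp(X) \times \TopExp(X) \), and the resulting diagram is exactly the one appearing at the end of the set-level proof.

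The only genuinely topological point — as opposed to bookkeeping transplanted from the set case — is the existence of a continuous multiplication on \( \TopExp(X) \), which requires that \( \TopExp(X) \times \TopExp(X) \) be identified with \( \TopExp(X \amalg X) \) as a topological space and not merely as a set. This is precisely where separation and core-compactness enter, and is supplied verbatim by the preceding proposition; I expect this identification to be the only nontrivial step, everything else being a faithful rewriting of the set-level argument.
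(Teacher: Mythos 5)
Your proposal is correct and matches the intended argument: the paper leaves this corollary unproved precisely because it follows by transplanting the proof of the earlier corollary (that \( \Exp(X) \) is the free idempotent commutative monoid) into \( \Top \), with the preceding proposition supplying the homeomorphism \( \TopExp(X)\times\TopExp(X)\IsIsomorphicTo\TopExp(X\amalg X) \) needed for the multiplication to be continuous. You correctly isolate that identification as the only genuinely topological input; everything else (idempotency, the universal property via the colimit topology, and checking monoid compatibility after precomposition with the jointly surjective maps \( X^I\times X^J\to\TopExp(X)\times\TopExp(X) \)) is the same diagram chase as in the set-level case.
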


\subsection{The metric exponential}

In addition to not being an exponential functor, the topological
exponential also does not preserve metrizability.
In fact \( \TopExp(X) \) is almost never metrizable.

\begin{proposition}
	Let \( X \) be a metrizable topological space.
	If \( X \) has an accumulation point,
	then \( \TopExp(X) \)
	is not metrizable.
\end{proposition}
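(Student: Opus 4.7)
The plan is a proof by contradiction. Suppose \( d \) is a compatible metric on \( \TopExp(X) \) and fix an accumulation point \( x_0 \in X \). I shall construct a sequence \( (S_m)_{m \geq 1} \) in \( \TopExp(X) \) converging to \( \{x_0\} \) with cardinalities \( |S_m| \to \infty \). Since \( X \) is separated (metrizable implies Hausdorff), this will contradict the fact that convergent sequences in \( \TopExp(X) \) must have bounded cardinality \UnskipRef{Remark: converging sequence in colimit topology}, forcing the metric \( d \) not to exist.

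The construction is diagonal. For each \( m \geq 1 \), the \( d \)-ball \( B_m \coloneqq \{S \in \TopExp(X) : d(S, \{x_0\}) < 1/m\} \) is an open neighborhood of \( \{x_0\} \). By definition of the topological exponential, its preimage under the canonical projection \( X^{m+1} \to \TopExp(X) \) is open in \( X^{m+1} \) and contains the constant tuple \( (x_0, \dots, x_0) \); since \( X \) is metrizable, this preimage therefore contains some product ball of the form \( B^X_{\eta_m}(x_0)^{m+1} \). Using that \( x_0 \) is an accumulation point, the small ball \( B^X_{\eta_m}(x_0) \) already contains infinitely many points distinct from \( x_0 \), so I can pick \( m \) pairwise distinct such points \( x_{m,1}, \dots, x_{m,m} \) and set \( S_m \coloneqq \{x_0, x_{m,1}, \dots, x_{m,m}\} \). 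By construction \( S_m \in B_m \) has cardinality exactly \( m+1 \), hence \( S_m \to \{x_0\} \) in \( (\TopExp(X), d) \) while \( |S_m| \to \infty \), as required.

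I do not foresee any serious obstacle: the argument rests on the inherent tension between the colimit topology on \( \TopExp(X) \), which bounds the cardinalities of convergent sequences because a convergent sequence is a compact subspace and must therefore factor through some \( \TruncatedTopExp{n}(X) \), and a hypothetical metric, which would by contrast allow one to cook up arbitrarily fine perturbations along the diagonal. The only checks needed are the openness of each preimage (tautological from the definition of \( \TopExp \)) and the abundance of points near an accumulation point in a metric space (standard).
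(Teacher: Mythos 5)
Your proof is correct and is essentially the paper's own argument: both produce, from a hypothetical compatible metric and the accumulation point, a sequence \( S_m \to \{x_0\} \) with \( |S_m| \to \infty \), contradicting the bounded-cardinality constraint on convergent sequences in the colimit topology\UnskipRef{Remark: converging sequence in colimit topology}. You merely spell out the step the paper leaves implicit, namely pulling the metric ball back along \( X^{m+1} \to \TopExp(X) \) to find the required \( m+1 \)-point configurations.
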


\begin{proof}
	Pick a metric \( D \) inducing the topology on \( \TopExp(X) \).
	Let \( x \in X \) be an accumulation point.
	For every \( n \geq 0 \), using the quotient map
	\( X^n \to \TruncatedTopExp n (X) \), one can find a subset
	\( S_n \subset X \) made of exactly \( n \) elements such that
	\( D(S_n, \{x\}) \leq 1/n \).
	In other words, \( S_n \to_{n \to +\infty} \{x\} \)
	and \( |S_n| \to_{n \to +\infty} +\infty \) which is forbidden
	\UnskipRef{Remark: converging sequence in colimit topology}.
\end{proof}

One way to remedy this is to compute the colimit defining
the exponential not in the category of topological spaces but rather
in the category \( \GenMet \) of generalized metric spaces.

A generalized metric space is a metric space whose distance function
is allowed to have the value \( + \infty \).
Morphisms in \( \GenMet \) are the metric maps: the maps
\( f \From (M, d_M) \to (N, d_N) \) such that
\( d_N(f(x), f(y)) \leq d_M(x, y) \) for every \( x, y \in M \).

The main advantage of the category of generalized metric spaces is that
it admits all small limits and colimits
\cite[4.5(3)]{doi:10.1017/jsl.2016.39}.
Computing the colimit defining the exponential
in \( \GenMet \), one obtains the \emph{metric exponential}, of which
we give a concrete definition.

\begin{definition}[(Metric exponential of a metric space)]
	Given a (generalized) metric space \( (X, d) \),
	its metric exponential is the generalized metric space
	\( (\Exp(X), D) \), 
	where
	\[
		D(S, T) \coloneqq
		\max
			\begin{cases}
				\max_{s \in S} \min_{t \in T} d(s,t)
				\\
				\max_{t \in T} \min_{s \in S} d(s, t).
			\end{cases}
	\]
	We shall denote the metric exponential by \( \MetricExp(X) \).
	In particular, one has
	\( D([\emptyset], T) = D(T, [\emptyset]) = + \infty \)
	when \( T \) is not empty.
\end{definition}

\begin{remark}
	The metric subspace \( \MetricRan(X) \subset \MetricExp(X) \)
	is used by Lurie as an intermediate tool
	to deal with locally constant non-unital factorization algebras
	which are locally constant cosheaves on \( \TopRan(X) \)
	\cite[3.3.2]{arXiv:0911.0018}.
	In a remark in \emph{Higher Algebra},
	he suggests using a variant of the exponential
	\( \MetricExp(X) \)
	where \( D([\emptyset], T) = D(T, [\emptyset]) = 0 \)
	for every \( T \subset X \), to deal with unital factorization
	algebras.

	The metric exponential has also been used by Knudsen
	\cite{doi:10.2140/gt.2018.22.4013} in his work extending the
	constructions of Francis and Gaitsgory
	\cite{doi:10.1007/s00029-011-0065-z}
	to the topological setup.
\end{remark}

The topology of the metric exponential
admits a basis given by opens of the form \( [U_i]_{i \in I} \)
where
\[
	S \in [U_i]_{i \in I} \quad \Leftrightarrow\quad
	\forall i \in I, \quad
	S \cap U_i \neq \emptyset.
\]
This allows us to define the metric exponential \( \MetricExp(X) \)
when \( X \) is only a topological space.

\begin{definition}[(Metric exponential of a topological space)]
	For a topological space \( X \), its metric
	exponential \( \MetricExp(X) \) consists of
	the set \( \Exp(X) \) endowed with the coarsest
	topology including all \( [U_i]_{i \in I} \)
	for every finite set \( I \) of open subsets \( U_i \subset X \).

	This definition is functorial: if \( f \From X \to Y \)
	is a continuous map, the preimage of
	\( [U_i]_{i \in I} \) by \( \Exp(f) \)
	equals \( [f^{-1}(U_i)]_{i \in I} \).
\end{definition}

Before looking at the exponential property of \( \MetricExp \), we shall
discuss how some limits and colimits are computed in
\( \GenMet \).
Given a small family \( \{(X_i, d_i)\}_{i \in I} \) of
(pointed) metric spaces, their
\begin{description}
	\item[coproduct]
		is the disjoint union of
		sets \( \amalg_{i \in I} X_i \)
		endowed with the distance \( d \) for which
		\[
			d(x_i, y_j) = \begin{cases}
				d_i(x_i, y_i), \quad \text{if}
				\quad i = j
				\\
				+\infty, \quad \text{if}
				\quad i \neq j
			\end{cases}
		\]
	\item[product]
		is the product set
		\( \prod_{i \in I} X_i \) endowed with the sup
		metric
		\[
			\textstyle
			d(\{x_i\}, \{y_i\})
			\coloneqq \sup_{i \in I} d_i(x_i, y_i)
		\]
	\item[finite product] is the finite product set
		endowed with the sup metric.
		In other words, in that case, the natural map
		\[
			\textstyle
			\SmallFiniteProduct{i \in I} X_i
			\longrightarrow \prod_{i \in I} X_i
		\]
		is an isometric embedding.
\end{description}

\begin{proposition}[(Exponential property)]
	The metric exponential \( \MetricExp \) is an
	exponential functor
	for both \( \GenMet \) and \( \Top \).
\end{proposition}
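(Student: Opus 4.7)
The plan is to show that the canonical set bijection
\[
    \SmallFiniteProduct{i \in I} \MetricExp(X_i)
    \longrightarrow
    \MetricExp\Bigl(\coprod_{i \in I} X_i\Bigr),
    \quad
    (S_i)_{i \in I} \longmapsto \coprod_{i \in I} S_i,
\]
induced by the unique decomposition $S = \coprod_i (S \cap X_i)$ of any finite subset $S \subset \coprod_i X_i$, is an isomorphism in both $\GenMet$ and $\Top$. The symmetric monoidal structure of $\MetricExp$ then follows from the cases $|I| \leq 2$.

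For the $\GenMet$ case, the finite product inherits the sup metric as an isometric subspace of the box product, while the coproduct metric on $\coprod_i X_i$ assigns distance $+\infty$ across distinct components. Unwinding the definition of the Hausdorff-type distance $D$ yields
\[
    D\Bigl(\coprod_i S_i, \coprod_i T_i\Bigr)
    = \begin{cases}
        \max_{j \in J} D_j(S_j, T_j) & \text{if } \{i : S_i \neq \emptyset\} = \{i : T_i \neq \emptyset\} = J, \\
        +\infty & \text{otherwise,}
    \end{cases}
\]
which matches $\sup_i D_i(S_i, T_i)$, since $D_i(S_i, T_i) = +\infty$ exactly when one of $S_i, T_i$ is empty and the other is not. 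Hence the bijection is an isometry.

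For the $\Top$ case, forward continuity follows from the universal property of the colimit defining the finite product: it suffices to check that each composite $\prod_{j \in J} \MetricExp(X_j) \to \MetricExp(\coprod_j X_j) \to \MetricExp(\coprod_i X_i)$ is continuous, and for a subbasic open $[V]$ of the target with $V = \coprod_i V_i$ open in the coproduct, the preimage unfolds as $\bigcup_{j \in J} \pi_j^{-1}([V_j])$, manifestly open. The main obstacle is reverse continuity, because the colimit topology on the finite product is a priori strictly finer than the topology generated by the subbasis $\{\pi_i^{-1}([U]) : U \subset X_i \text{ open}\}$. The key is that in $\MetricExp(X_i)$ the empty set has only the whole space as an open neighborhood, since the only basic open $[U_1, \ldots, U_n]$ containing $\emptyset$ is the trivial one with $n = 0$. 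An induction on the number of coordinates on which $(t_i)$ differs from $(s_i)$ then establishes the closure property: if $W$ is open in $\SmallFiniteProduct{i \in I} \MetricExp(X_i)$ and contains $(s_i)$ with support $J_0$, then $W$ also contains every finitely supported $(t_i)$ with $t_j = s_j$ for all $j \in J_0$; the inductive step uses that any neighborhood of the modified tuple in $\prod_{J_0 \cup \{k\}} \MetricExp(X_j)$ must be of the form $\prod_{j \in J_0} V_j \times \MetricExp(X_k)$. Applying this closure to a product neighborhood $\prod_{j \in J_0} V_j \subset W \cap \prod_{j \in J_0} \MetricExp(X_j)$ upgrades it to a subbasic-generated neighborhood $\bigcap_{j \in J_0} \pi_j^{-1}(V_j) \subset W$, so the two topologies coincide and the inverse is continuous.
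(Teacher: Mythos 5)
Your \( \GenMet \) argument is correct and is the paper's own computation: points in distinct components are at infinite distance in the coproduct, so \( D(S,T)=\sup_{i} D_i(S_i,T_i) \) is the sup metric of the finite product; your case analysis on supports is already absorbed by \( D_i(\emptyset,T_i)=+\infty \).

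The topological half rests entirely on the claim that the only open neighbourhood of \( [\emptyset] \) in \( \MetricExp(X_i) \) is the whole space, and this is where the argument comes apart. You are reading the displayed condition defining \( [U_i]_{i\in I} \) literally, as a pure ``hit'' condition; but that reading is inconsistent with the sentence introducing it, which asserts that these sets form a basis of the topology of the \emph{metric} exponential. In the metric exponential \( [\emptyset] \) lies at infinite distance from every nonempty configuration, so \( \{[\emptyset]\} \) is open, whereas a union of finite intersections of pure hit sets containing \( [\emptyset] \) is necessarily everything. (Pure hit sets also fail to generate the Hausdorff topology on nonempty configurations: every hit neighbourhood of \( \{0\}\subset\Reals \) contains \( \{0,100\} \).) The intended basic opens are the Vietoris ones, \( S\in[U_i]_{i\in I} \) iff \( S\subset\bigcup_i U_i \) \emph{and} \( S\cap U_i\neq\emptyset \) for all \( i \); the basic open indexed by the empty family is then \( \{[\emptyset]\} \) itself. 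Under this reading your key lemma is false, the induction upgrading \( \prod_{j\in J_0}V_j \) to \( \bigcap_{j\in J_0}\pi_j^{-1}(V_j) \) collapses, and its conclusion is in fact wrong: \( \prod_{j\in J_0}V_j\times\prod_{i\notin J_0}\{[\emptyset]\} \) is open in the colimit topology but is not generated by finitely many subbasic \( \pi_j^{-1} \)'s. The proposition survives because that extra open corresponds, under the bijection, to the Vietoris open \( [U_{j,k}]_{j\in J_0,\,k\in K_j} \) of \( \MetricExp(\coprod_i X_i) \): the containment half of the condition is precisely what forces \( S\cap X_i=\emptyset \) outside \( J_0 \). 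With the basepoints open, the finite product is simply an open subspace of the box product, and matching its basic opens \( \prod_{j\in J}[U_{j,k}]_{k\in K_j} \) with the basic opens \( [U_{j,k}]_{j\in J,\,k\in K_j} \) of the target is the entire proof; this is what the paper does.
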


\begin{proof}
	Starting with the metric case:
	let \( \{(X_i, d_i)\}_{i \in I} \) be a small family of
	metric spaces.
	We only need to show that the bijections in the exponential
	structure of \( \Exp \) are isometric.
	Let \( S \) and \( T \) be two finite subsets of the union
	in \( (X,d) \coloneqq \amalg_{i \in I} (X_i,d_i) \)
	and write \( S_i \coloneqq S \cap X_i \) and
	\( T_i \coloneqq T \cap X_i \) for every \( i \in I \).
	By construction of the disjoint union,
	if \( s \in S \) and \( t \in T \) do not
	belong to the same component \( X_i \), their distance
	\( d(s, t) \) in \( X \) is infinite.
	As a consequence the distance \( D(S, T) \)
	in \( \MetricExp(X) \) becomes
	\[
		D(S, T) =
		\max
			\begin{cases}
				\max_{s \in S} \min_{t \in T} d(s,t)
				=
				\sup_{i \in I} \max_{s \in S_i}
				\min_{t \in T_i} d_i(s,t)
				\\
				\max_{t \in T} \min_{s \in S} d(s, t)
				=
				\sup_{i \in I} \max_{t \in T_i}
				\min_{s \in S_i} d_i(s,t)
			\end{cases}
	\]
	and thus \( D(S,T) = \sup_{i \in I} D_i(S_i, T_i) \).

	Let \( \{X_i\}_{i \in I} \) be a small family of
	topological spaces.
	Because finite sets can only intersect a finite number of
	connected components, the open sets of the form
	\( [U_{j,k}]_{j \in J, k \in K_j} \) where \( J \subset I \)
	and each \( K_j \) are finite,
	and where each \( U_{j, k} \subset X_j \) is open, form a
	basis of the topology of \( \MetricExp(\amalg_{i \in I} X_i) \).
	It corresponds bijectively to
	the base open set
	inside \( \SmallFiniteProduct{i \in I} \MetricExp(X_i) \)
	given by \( \prod_{j \in J} [U_{j,k}]_{k \in K_j} \).
	Thus the bijection
	\( \MetricExp(\amalg_{i \in I} X_i)
	\to \SmallFiniteProduct{i \in I} \MetricExp(X_i) \)
	is a homeomorphism.
\end{proof}

\begin{proposition}
	Let \( X \) be a (generalized) metric space.
	Then \( \MetricExp(X) \) is the free idempotent commutative
	metric monoid on \( X \).
\end{proposition}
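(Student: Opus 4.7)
The plan is to follow the strategy of the analogous corollary for sets (where \( \Exp(X) \) was shown to be the free idempotent commutative monoid), upgraded to take distances into account. Since \( \MetricExp \) has just been shown to be an exponential functor on \( \GenMet \), the same diagrammatic argument as in the set case endows \( \MetricExp(X) \) with the structure of an idempotent commutative monoid in \( \GenMet \), with unit the empty configuration \( [\emptyset] \) and multiplication induced by union of finite subsets; the relevant diagrams live automatically in \( \GenMet \), so no extra metric verification is needed at this stage.

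For the universal property, fix an idempotent commutative metric monoid \( (M, \mu) \) and a metric map \( \psi \From X \to M \). As in the set case I define \( \tilde{\psi} \From \MetricExp(X) \to M \) by \( \tilde{\psi}(\{x_1, \dots, x_n\}) \coloneqq \mu(\psi(x_1), \dots, \psi(x_n)) \), with \( \tilde{\psi}([\emptyset]) \) the unit of \( M \); commutativity plus associativity make the value independent of enumeration, and idempotency makes it independent of repetitions, so the assignment depends only on the underlying set. Uniqueness is automatic: any monoid map extending \( \psi \) is forced to coincide with \( \tilde{\psi} \) on each finite subset.

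The main obstacle is to show that \( \tilde{\psi} \) is itself a metric map, i.e.
\[
	d_M(\tilde{\psi}(S), \tilde{\psi}(T)) \leq D(S, T)
\]
for every pair \( S, T \subset X \) of finite subsets. The idea is a pairing trick: for each \( s \in S \) pick a nearest \( t(s) \in T \) and for each \( t \in T \) pick a nearest \( s(t) \in S \); with \( N \coloneqq |S| + |T| \), assemble two surjections
\[
	\widehat{\sigma}, \widehat{\tau} \From \{1, \dots, N\} \longrightarrow S \cup T
\]
where the first \( |S| \) indices enumerate \( S \) via \( \widehat{\sigma} \) and the corresponding \( t(s) \)'s via \( \widehat{\tau} \), while the last \( |T| \) indices do the symmetric thing. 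By construction \( \widehat{\sigma} \) is surjective onto \( S \), \( \widehat{\tau} \) onto \( T \), and \( d_X(\widehat{\sigma}(i), \widehat{\tau}(i)) \leq D(S, T) \) for every \( i \) by the very definition of \( D \). Because \( \mu \From M^N \to M \) (built iteratively from the two-fold product, with the sup metric on the source) is 1-Lipschitz and \( \psi \) is 1-Lipschitz, the composite \( \mu \circ \psi^N \) sends the pair of \( N \)-tuples \( (\widehat{\sigma}, \widehat{\tau}) \) to two points at distance at most \( D(S, T) \) in \( M \); by idempotency and commutativity these two points equal \( \tilde{\psi}(S) \) and \( \tilde{\psi}(T) \), yielding the desired inequality. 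The only delicate part is ensuring the paired surjections can be arranged as above, which is immediate once the Hausdorff-type definition of \( D \) is unpacked.
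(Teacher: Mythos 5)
Your proposal is correct, and its skeleton (monoid structure inherited from the exponential property, the universal map \( \tilde{\psi} \) defined via iterated multiplication, with all the difficulty concentrated in showing \( \tilde{\psi} \) is a metric map) matches the paper's; but your proof of the key inequality \( d_M(\tilde{\psi}(S), \tilde{\psi}(T)) \leq D(S,T) \) takes a genuinely different route. The paper argues by induction on \( |S| + |T| \): it first extracts from idempotency the one-step estimate \( d(a, bc) = d(aa, bc) \leq \max(d(a,b), d(a,c)) \) (using that \( \mu \) is a metric map for the sup metric on \( M \times M \)), settles the case where \( S \) or \( T \) is a singleton by iterating it, and then peels off a point \( x \in S \) realizing \( D(S,T) \) and invokes the inductive hypothesis for \( S \setminus \{x\} \) and \( T \). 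Your matching argument does everything in one shot: pad \( S \) and \( T \) to two \( N \)-tuples that are coordinatewise within \( D(S,T) \) of each other, use that the iterated product \( M^N \to M \) is \( 1 \)-Lipschitz for the sup metric, and use idempotency and commutativity to identify the two images with \( \tilde{\psi}(S) \) and \( \tilde{\psi}(T) \). Both arguments rest on the same two facts (idempotency lets you duplicate entries; \( \mu \) is nonexpansive), but yours avoids the induction and the attendant bookkeeping about how \( D(S \setminus \{x\}, T) \) compares to \( D(S,T) \), which is the delicate point in the paper's inductive step; the paper's version is slightly shorter to write down. The only thing to add to your write-up is the degenerate case: when exactly one of \( S \), \( T \) is empty the inequality is vacuous because \( D(S,T) = +\infty \) in a generalized metric space, and when both are empty both sides involve the unit.
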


\begin{proof}
	The canonical map \( X \to \MetricExp(X) \) is
	an isometry by construction.

	So the only thing to show is that for
	\( (A, d) \)
	an idempotent and commutative metric monoid, the map
	\( \MetricExp(A) \to A \) sending \( S \subset A \) to
	\( \prod_{s \in S} s \in A \) --- which is well defined because
	\( A \) is commutative and is a monoid map because \( A \)
	is idempotent --- is a metric map.

	Given two finite subsets \( S, T \subset A \), we need to show
	that \( d(\prod_{s \in S} s, \prod_{t \in T} t)
	\leq D(S, T) \).
	If \( S \) or \( T \) is empty, it is immediate.
	Because \( A \) is an idempotent metric monoid
	\( d(a, bc) = d(aa, bc)
	\leq \max(d(a,b), d(a,c)) \) for every \( a, b, c
	\in A \).
	By straightforward induction, one gets the case where either
	\( S \) or \( T \) has a unique element.
	Let \( n \) be an integer and assume that
	the inequality has been shown for every \( S, T \) with
	\( |S| + |T| \leq n \).
	Let \( S, T \subset A \) with \( |S| + |T| = n+1 \).
	Without loss of generality, we can assume that there exists
	\( x \in S \) such that \( D(S, T) = d(x, T) \).
	Let \( S_0 \) denote the complement of \( x \) in \( S \).
	Then
	\[
		\begin{aligned}
			\textstyle
			d(\prod_{s \in S} s, \prod_{t \in T} t)
				&= 
				\textstyle
				d(x \times \prod_{s \in S_0} s, \prod_{t \in T} t)
			\\
			  &\leq 
			  \textstyle
			  \max\big(d\big(x, \prod_{t \in T} t\big),
			  d\left(\prod_{s \in S_0} s, \prod_{t \in T} t\right)\big)
			  & & \text{(\( A \) is metric)}
			\\
			  &\leq 
			  \textstyle
			  \max(d(x, T),
			  D(S_0, T))
			  & & \text{(by hypothesis)}
			\\
			  &=
			  D(S, T)
			  & & \text{(by definition of \( x \))}
		\end{aligned}
	\]
	ending showing that \( \MetricExp (A) \to A \) is a metric map.
\end{proof}

\begin{proposition}
	For every topological space \( X \), the identity
	\[
		\TopExp(X) \longrightarrow \MetricExp(X)
	\]
	is a continuous map, which restricts to homeomorphisms
	\[
		\TruncatedTopExp n (X) = \TruncatedMetExp n (X)
	\]
	for every \( n \in \PointedOmega \),
	whenever \( X \) is separated.
\end{proposition}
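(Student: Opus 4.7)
The plan is to prove continuity of $\TopExp(X) \to \MetricExp(X)$ first, then upgrade it to a homeomorphism on each $n$-truncation under the separation hypothesis. Continuity follows from the universal property of $\TopExp(X)$ as carrying the finest topology making every projection $X^n \to \Exp(X)$ continuous: a map out of $\TopExp(X)$ is continuous iff its precomposition with each such projection is. Thus I need only verify that the preimage in $X^n$ of a basic open $[U_i]_{i \in I}$ of $\MetricExp(X)$ is open, and this preimage is visibly a finite intersection of finite unions of open boxes.

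For the homeomorphism on truncations, the continuous direction is a restriction of the previous step; the converse requires showing every open $V \subset \TruncatedTopExp n(X)$ is open in $\TruncatedMetExp n(X)$. The key input is \UnskipRef{Lemma: perfect}: for separated $X$, the projection $q \colon X^n \twoheadrightarrow \TruncatedTopExp n(X)$ is a closed quotient map, so $q^{-1}(V)$ is an open saturated neighborhood of the finite fiber $q^{-1}(S)$ for any $S = \{x_1, \ldots, x_k\} \in V$ with $k \le n$.

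Using Hausdorffness I would construct pairwise disjoint open neighborhoods $W_i \ni x_i$ fine enough that for every surjection $f \colon \{1, \ldots, n\} \twoheadrightarrow \{1, \ldots, k\}$ the box $\prod_j W_{f(j)}$ is contained in $q^{-1}(V)$. Concretely, for each of the finitely many such $f$ the tuple $(x_{f(1)}, \ldots, x_{f(n)}) \in q^{-1}(S)$ sits in some open box inside the open set $q^{-1}(V)$, and intersecting the resulting factor constraints around each $x_i$ produces the desired $W_i$. The proposed $\MetricExp(X)$-neighborhood of $S$ inside $V$ is then the basic open consisting of finite subsets $T$ contained in $\bigcup_i W_i$ and meeting each $W_i$.

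Verifying containment is the key computation: any such $T$ with $|T| \le n$ decomposes, by disjointness of the $W_i$, as $T = \bigsqcup_i (T \cap W_i)$ with each piece nonempty; enumerating the elements of $T$ and padding with a repeated element yields a tuple $(y_1, \ldots, y_n)$ and a surjection $f$ such that $y_j \in W_{f(j)}$ and $q(y_1, \ldots, y_n) = T$, whence $T \in V$. The main obstacle is that neighbors of $S$ can have strictly more elements than $S$, up to the bound $n$, which the padding trick addresses; equally essential is that the basic open simultaneously imposes a "meets each $W_i$" condition and a "contained in $\bigcup_i W_i$" condition, so that nearby subsets cannot acquire distant extra points escaping $V$.
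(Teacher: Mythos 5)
Your argument is correct, and for the second half it takes a genuinely different route from the paper. The continuity half is the same on both sides: one checks that the preimage of a subbasic open \( [U_i]_{i \in I} \) under \( X^n \to \Exp(X) \) is a finite intersection of finite unions of open boxes. For the homeomorphism on truncations, however, the paper does not argue directly: it defers the claim and ultimately invokes Handel's theorem that, for separated \( X \), the sets \( [U_i]_{i \in I} \cap \TruncatedTopExp n (X) \) form a basis of the quotient topology on \( \TruncatedTopExp n (X) \) \cite[2.11]{hjm:28-4}. You reprove exactly this fact from scratch, using only that \( X^n \twoheadrightarrow \TruncatedTopExp n (X) \) is a quotient map [\ref{Lemma: perfect}], Hausdorffness to separate the points of \( S \), and the padding trick to handle nearby configurations \( T \) with \( |S| < |T| \leq n \). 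Intersecting the box constraints over \emph{all} surjections \( f \From \{1,\dots,n\} \twoheadrightarrow \{1,\dots,k\} \) when shrinking the \( W_i \) is precisely the step a careless direct proof omits, and you handle it correctly; this makes the proposition self-contained at the cost of some combinatorics. One caveat deserves emphasis: your basic neighbourhood of \( S \) imposes both \( T \cap W_i \neq \emptyset \) and \( T \subset \bigcup_i W_i \), and you rightly insist that the containment half is essential (inside \( \TruncatedMetExp n (X) \) it is not implied by the meeting conditions when \( |S| < n \)). Strictly speaking that containment condition is not among the generators \( [U_i]_{i \in I} \) displayed in the paper's definition of \( \MetricExp \) of a topological space; it is nonetheless open, because \( \Exp(U) \subset \MetricExp(X) \) is open for every open \( U \subset X \) --- this is how metric balls decompose in the metric case, and it is what Remark~[\ref{minimality}] asserts when it says \( \MetricExp \) preserves open embeddings. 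Had one used only the meeting conditions, the homeomorphism claim would already fail for \( X = \Reals \) and \( n = 2 \), so your instinct on this point is the right one.
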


\begin{proof}
	Let \( U \subset X \) be an open subset.
	Let \( n \geq 1 \) be an integer
	and let \( \sigma \) denote the permutation \( (1\cdots n) \).
	Then the preimage along \( X^n \to \Exp(X) \) of \( [U] \)
	is the set \( \cup_{i \leq n} \sigma^i (U \times X^{n-1}) \)
	which is open.
	It follows that \( [U] \) is open in \( \TopExp(X) \).
	For a finite \( I \),
	\( [U_i]_{i \in I} = \cap_{i \in I} [U_i] \) is then also open
	in \( \TopExp(X) \).
\end{proof}

\subsection{The minimal exponential}

\begin{definition}
	Given a topological space, the \emph{minimal exponential}
	\( \ExpMin(X) \)
	is the set \( \Exp(X) \) endowed with the coarsest topology
	containing the subsets \( \Exp(U) \subset \Exp(X) \)
	for all open subsets \( U \subset X \).
\end{definition}

\begin{remark}
	One distinctive feature of the minimal exponential is that
	the point presenting the empty configuration \( [\emptyset] \)
	is dense.
\end{remark}

Families of open subsets \( \{U_i \subset V \}_{i \in I} \)
for which
\( \{\Exp(U_i) \subset \Exp(V)\}_{i \in I} \) is a cover
in the minimal exponential, were introduced by Weiss in his work
on the embedding calculus
\cite{doi:10.2140/gt.1999.3.67}.
This notion of covering is used
by Costello and Gwilliam
to define factorization algebras in general
\cite[1.4.1]{doi:10.1017/9781316678626}.
It is also used by Ayala and Francis in their study of factorization
homology
\cite[2.6]{arXiv:1903.10961}.

\begin{proposition}[(Exponential property)]
		The minimal exponential is an exponential
		functor on the category of topological spaces.
\end{proposition}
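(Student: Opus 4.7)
The plan is to establish the theorem in two stages: first verify that $\ExpMin$ is a symmetric monoidal functor $\Top^\sqcup \to \Top^\times$ via the binary case, and then upgrade that to the infinitary exponential property. The category $\Top$ has all the required structure (finite products, filtered colimits and small coproducts), so the definition of exponential functor applies.

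First I would verify functoriality: for a continuous $f \From X \to Y$ and an open $V \subset Y$, the preimage $\Exp(f)^{-1}(\Exp(V))$ equals $\Exp(f^{-1}(V))$, which is open in $\ExpMin(X)$. Then I would check that the canonical bijection
\[
	\ExpMin(X \sqcup Y) \longrightarrow \ExpMin(X) \times \ExpMin(Y)
\]
is a homeomorphism: in one direction, a subbasic open $\Exp(V) \times \ExpMin(Y) = \pi_X^{-1}(\Exp(V))$ pulls back to $\Exp(V \sqcup Y)$, which is subbasic open in the coproduct; in the other direction, the subbasic open $\Exp(V \sqcup W)$ of the coproduct corresponds to $\Exp(V) \times \Exp(W)$, a product of subbasic opens. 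Induction yields the finite case for any finite disjoint union, and the symmetric monoidal coherences are inherited from the set-level exponential functor.

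For the infinitary statement, the canonical map $\Psi \From \SmallFiniteProduct{i \in I} \ExpMin(X_i) \to \ExpMin\left(\coprod_{i \in I} X_i\right)$ is continuous as the colimit over finite $J \subset I$ of the continuous composites $\prod_{j \in J} \ExpMin(X_j) \IsIsomorphicTo \ExpMin\left(\coprod_{j \in J} X_j\right) \to \ExpMin\left(\coprod_{i \in I} X_i\right)$, where the first arrow is the finite-case homeomorphism and the second is induced by functoriality. The substantive step is continuity of the inverse $\Phi \From \ExpMin(\coprod_i X_i) \to \SmallFiniteProduct{i \in I} \ExpMin(X_i)$. The key observation is that $\Exp(\emptyset) = \{[\emptyset]\}$ is a subbasic open of each $\ExpMin(X_i)$, so basepoints are open, and thus, by the preliminary discussion of finite products, $\SmallFiniteProduct{i \in I} \ExpMin(X_i)$ admits a basis consisting of sets
\[
	B_{J, \{U_j\}} \coloneqq \left\{(S_i) \;\middle|\; S_j \in U_j \text{ for } j \in J,\ S_i = [\emptyset] \text{ for } i \notin J\right\},
\]
with $J \subset I$ finite and $U_j$ open in $\ExpMin(X_j)$.

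It remains to show $\Phi^{-1}(B_{J, \{U_j\}})$ is open in $\ExpMin(\coprod_i X_i)$. This preimage consists of finite subsets $S \subset \coprod_i X_i$ supported in $Y \coloneqq \coprod_{j \in J} X_j$ with $S \cap X_j \in U_j$ for each $j \in J$. Writing $\Phi_J \From \ExpMin(Y) \IsIsomorphicTo \prod_{j \in J} \ExpMin(X_j)$ for the finite-case homeomorphism, the preimage equals $\Phi_J^{-1}\left(\prod_{j \in J} U_j\right)$, which is open in $\ExpMin(Y)$. Since $Y \subset \coprod_i X_i$ is open, $\Exp(Y)$ is subbasic open in $\ExpMin(\coprod_i X_i)$, and its induced subspace topology coincides with $\ExpMin(Y)$ because the subbasic opens $\Exp(V) \cap \Exp(Y) = \Exp(V \cap Y)$ range over all $\Exp(W)$ with $W \subset Y$ open. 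Hence the preimage is also open in $\ExpMin(\coprod_i X_i)$, completing the argument. The main obstacle is navigating the colimit topology on $\SmallFiniteProduct{i \in I} \ExpMin(X_i)$, which becomes tractable precisely because each $[\emptyset]$ is an open point.
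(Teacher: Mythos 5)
Your proof is correct and follows essentially the same route as the paper's: both arguments hinge on the observation that \( [\emptyset] = \Exp(\emptyset) \) is an open point, so that \( \SmallFiniteProduct{i \in I} \ExpMin(X_i) \) sits as an open subspace of the box product with an explicit basis, and then match that basis against the basis \( \{\Exp(U)\} \) of \( \ExpMin(\coprod_i X_i) \) (which is closed under finite intersections). Your write-up is merely more detailed, organizing the verification as finite case plus an infinitary step via the open subspace \( \Exp(\coprod_{j \in J} X_j) \), where the paper compresses this into a single correspondence of bases.
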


\begin{proof}
	Let \( \{X_i\}_{i \in I} \) be a small family of spaces.
	Since \( [\emptyset] \) is open in the minimal topology,
	the finite product of the \( \ExpMin(X_i) \) is a subspace
	of the product endowed with the box topology.

	Given a family of open subsets
	\( \{U_i \subset X_i\}_{i \in I} \), one has bijections
	\[
		\textstyle
		\Exp\big(\coprod_{i \in I} U_i\big)
		\IsCanonicallyIsomorphicTo
		\SmallFiniteProduct{i \in I} \Exp(U_i)
		\IsCanonicallyIsomorphicTo
		\big(\prod_{i \in I} \Exp(U_i)\big)
		\cap \left(\SmallFiniteProduct{i \in I} \Exp(X_i)\right)
	\]
	showing the correspondence between the two bases of open
	sets between \( \ExpMin(\amalg_{i \in I} X_i) \)
	and \( \SmallFiniteProduct{i \in I} \ExpMin(X_i) \).
\end{proof}

\begin{remark}[(Minimality)]
	\label{minimality}
	The functors \( \TopExp \), \( \MetricExp \)
	and \( \ExpMin \) preserve open embeddings between topological
	spaces.
	In the category of exponential functors of
	\( \Top \) having this preservation property,
	\( \ExpMin \) is a final object.
\end{remark}

\subsection{Weak homotopy type of the exponentials}

The functors \( X \mapsto \TruncatedTopExp n (X) \) have
interesting homotopy properties as shown by Handel.
In particular, he showed that for \( X \) a separated and path
connected space, \( \TopRan (X) \) is weakly contractible
\cite[4.3]{hjm:28-4}.
Curtis \& Nhu showed that \( \MetricRan(X) \)
is \emph{homeomorphic} to a linear space, whenever
\( X \) is a
connected, locally path connected
metric space, which is a countable union of finite
dimensional compact spaces
\cite{doi:10.1016/0166-8641(85)90005-7}; it is in particular
\emph{contractible} in the strong sense.
In a simpler proof, Lurie
showed that \( \MetricRan(M) \) is
weakly contractible when \( M \) is a connected manifold
\cite[3.3.6]{arXiv:0911.0018}.

Here we shall enhance these results by describing the weak homotopy type
of each exponential for any separated and locally path connected space.
Since \( [\emptyset] \) is dense in \( \ExpMin(X) \),
it follows that \( \ExpMin(X) \) is contractible for any space \( X \).
Hence, we shall focus on the metric and the topological exponentials.
We start with a lemma due to Beilinson and Drinfeld.

\begin{lemma}
	\label{lemma: trivial group}
	Let \( G \) be a group endowed with an extra
	operation
	\( \wedge \From G \times G \to G \) such that
	\( \wedge \) is associative, idempotent and such that
	\( ab \wedge cd = (a \wedge c)(b \wedge d) \) for any
	\( a, b, c, d \in G \).
	Then \( G \) is a trivial group.
\end{lemma}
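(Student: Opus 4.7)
The plan is to run the Eckmann--Hilton argument. First I would observe that the interchange law \( (ab) \wedge (cd) = (a \wedge c)(b \wedge d) \) asserts exactly that \( \wedge \From G \times G \to G \) is a group homomorphism with respect to the direct-product structure on the source. Specializing it with \( b = c = e \) yields the identity \( a \wedge d = (a \wedge e)(e \wedge d) \), while specializing with \( a = d = e \) yields \( a \wedge d = (e \wedge d)(a \wedge e) \). Comparing the two decompositions shows that the endomorphisms \( \mu(a) \coloneqq a \wedge e \) and \( \nu(b) \coloneqq e \wedge b \) of \( G \) are group homomorphisms (setting \( c = d = e \) resp.\ \( a = b = e \) in the interchange law) whose images commute pairwise in \( G \).

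Next, I would use the fact that \( e \) is a two-sided unit for \( \wedge \), so that \( \mu = \nu = \mathrm{id}_G \). This is the natural context in which the lemma is invoked, namely with \( G \) a homotopy group carrying two compatible \( H \)-space structures that share the base point as a common unit. Under this hypothesis, the horizontal decomposition collapses to \( a \wedge b = a \cdot b \), and combining with the vertical one gives \( a \cdot b = b \cdot a \), so that \( \wedge \) coincides with the group multiplication and \( G \) is abelian.

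Finally, the idempotency hypothesis \( a \wedge a = a \), now read as \( a^2 = a \) in the group \( (G, \cdot) \), forces \( a = e \) for every \( a \in G \) by cancellation. Therefore \( G \) is the trivial group.

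The main obstacle in this approach is establishing the unit property of \( e \) for \( \wedge \); once that is in place, the rest is a direct application of the Eckmann--Hilton collapse followed by the cancellation principle in groups. A secondary check, not requiring the unit hypothesis but useful as a sanity test, is to verify using idempotency directly that \( a = \mu(a) \nu(a) \) for every \( a \), so that \( G \) decomposes as an internal direct product \( \mu(G) \times \nu(G) \); the unit property is precisely what collapses this decomposition and forces the outcome.
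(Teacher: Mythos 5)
The step you yourself single out as the main obstacle is a genuine gap, and it cannot be filled: nothing in the hypotheses makes the group unit \( e \) a unit for \( \wedge \), and this is precisely the point of the lemma. In the application, \( \wedge \) on \( \PiN{n}(\MetricExp(X), S) \) is induced by the union product of \( \MetricExp(X) \), for which the basepoint \( S \) is an idempotent (\( S \cup S = S \)) but not a unit (the unit of the monoid is the empty configuration, a different point of the space), so the shared-unit hypothesis of Eckmann--Hilton is exactly what is unavailable; the lemma is ``Eckmann--Hilton with the unit replaced by an idempotent''. Worse, the unit property is not derivable from associativity, idempotency and the interchange law alone: on any nontrivial group the second projection \( a \wedge b \coloneqq b \) satisfies all three, and \( e \) is not a unit for it. (This example also shows that commutativity of \( \wedge \) --- implicit in the meet notation, supplied in the application by the commutativity of the union product, and used tacitly in the paper's own proof --- must be added to the hypotheses for the statement to hold at all.)

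The correct argument starts from what you relegate to a ``secondary check''. Idempotency and interchange give \( g = g \wedge g = (g\cdot e) \wedge (e \cdot g) = (g \wedge e)(e \wedge g) = \mu(g)\nu(g) \); commutativity of \( \wedge \) gives \( \mu = \nu \), so \( g = (e \wedge g)^2 \) for every \( g \) --- this is the identity the paper's proof extracts. Associativity and idempotency of \( \wedge \) then make \( \nu \) idempotent as a map, since \( e \wedge (e \wedge h) = (e \wedge e) \wedge h = e \wedge h \), so substituting \( g = e \wedge h \) into \( g = (e \wedge g)^2 \) yields \( (e \wedge h)^2 = e \wedge h \), hence \( e \wedge h = e \) by cancellation in \( G \), and finally \( h = (e \wedge h)^2 = e \). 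No unit for \( \wedge \) ever enters; the idempotent does all the work. Your decomposition \( G = \mu(G)\nu(G) \) is correct and is the right first move, but the collapse is forced by \( \mu = \nu \) together with \( \mu \circ \mu = \mu \), not by \( \mu = \nu = \mathrm{id}_G \).
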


\begin{proof}
	For every \( g \in G \) one has
	\[
		g \wedge g = g \implies (1g) \wedge (1g) = g
		\implies (1 \wedge g)^2 = g.
	\]
	So for every \( h \in G \), letting \( g = 1 \wedge h \),
	\[
		(1 \wedge h)^2 = (1 \wedge 1 \wedge h)^2 = 1 \wedge h
	\]
	since \( G \) is group, we get \( 1 \wedge h = 1 \) and
	\( h = (1 \wedge h)^2 = 1 \) for every \( h \in G \).
\end{proof}

\begin{lemma}
	If \( X \) is path connected, then \( \TopRan(X) \)
	is path connected.
	As a consequence, \( \MetricRan(X) \) is also path connected.
\end{lemma}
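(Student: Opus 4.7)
The plan is to exploit the canonical surjection \( X^n \twoheadrightarrow \TruncatedTopExp n(X) \) to transfer path connectedness from Cartesian powers of \( X \) to truncations of the Ran space, and then pass to the union. This avoids any reference to a base point and sidesteps the poor point set topology of the full colimit.

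First, for every \( n \geq 1 \), the canonical map \( X^n \to \TopRan(X) \) is continuous by construction and has image exactly \( \TruncatedTopExp n(X) \), the subspace of finite non-empty subsets of cardinality at most \( n \). Since \( X \) is path connected, so is the product \( X^n \), and hence its continuous image \( \TruncatedTopExp n(X) \) is path connected.

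Next, I would observe that the family \( \{\TruncatedTopExp n(X)\}_{n \geq 1} \) is a nested family of path-connected subspaces covering \( \TopRan(X) \), and all of them contain \( \TruncatedTopExp 1(X) \). Since the intersection of any two members contains this common path-connected piece, the union \( \TopRan(X) \) is itself path connected. Alternatively, given two non-empty finite subsets \( S, T \subset X \) with \( N \coloneqq \max(\lvert S \rvert, \lvert T \rvert) \), one can choose tuples \( s, t \in X^N \) (by repeating coordinates if necessary) whose images in \( \TopRan(X) \) are \( S \) and \( T \), and then push down any path from \( s \) to \( t \) in \( X^N \).

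For the consequence, the identity \( \TopExp(X) \to \MetricExp(X) \) is continuous, and by restricting to subsets of positive cardinality it gives a continuous surjection \( \TopRan(X) \to \MetricRan(X) \); path connectedness is preserved by continuous surjections, so \( \MetricRan(X) \) is path connected as well. There is no real obstacle here: the only subtle point is to make sure one uses a single \( N \) large enough to represent both \( S \) and \( T \) in \( X^N \), which is exactly what allows one to connect subsets of different cardinalities without using the monoid structure.
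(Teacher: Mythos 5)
Your proposal is correct and its second argument (choosing a common $N$, lifting $S$ and $T$ to tuples in $X^N$, and pushing a path down through the continuous map $X^N \to \TopRan(X)$) is exactly the paper's proof; the nested-union argument is a harmless repackaging of the same idea. The deduction for $\MetricRan(X)$ via the continuous identity map $\TopExp(X) \to \MetricExp(X)$ is also what the paper intends.
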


\begin{proof}
	Given two proper finite subsets
	\( S, T \subset X \), there exists
	a large enough positive \( n \in \Naturals \) and two
	tuples \( (s_1, \dots, s_n) \) and \( (t_1, \dots, t_n)  \)
	representing respectively \( S \) and \( T \).
	Since \( X \) is path connected, there exists a path
	between those two tuples in \( X^n \) and since
	the map \( X^n \to \TopExp(X) \) is continuous by construction
	and factors through \( \TopRan(X) \),
	this gives us a continuous path between \( S \) and \( T \)
	in \( \TopRan(X) \).
\end{proof}

In what follows, let us denote by \( \IdemTwo \) the
commutative idempotent monoid with two elements
\( (\{0,1\}, \vee) \) and endow it with the discrete topology.

\begin{theorem}
	Let \( X \) be a locally path connected topological space.
	The monoid map
	\[
		\begin{tikzcd}
			\MetricExp(X)
			\rar["\exists"]
				&
				\displaystyle
				\bigoplus_{\PiZero(X)}
				\IdemTwo
		\end{tikzcd}
	\]
	sending a finite subset \( S \subset X \) to the family
	\( \{\exists_i\}_{i \in \PiZero(X)} \) with \( \exists_i = 0 \)
	if and only if no element of \( S \) belongs to the connected
	component \( X_i \subset X \), is continuous and a weak
	homotopy equivalence.

	Moreover, if \( X \) is also separated,
	the induced continuous map
	\[
		\begin{tikzcd}
			\TopExp(X)
			\rar["\exists"]
				&
				\displaystyle
				\bigoplus_{\PiZero(X)}
				\IdemTwo
		\end{tikzcd}
	\]
	is also a weak homotopy equivalence.
\end{theorem}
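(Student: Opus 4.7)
The plan is to treat continuity as an automatic consequence of functoriality, reduce the weak equivalence to the path connected case via the exponential property, and conclude using Lemma \ref{lemma: trivial group}. Continuity of \( \exists \) is immediate: endow \( \PiZero(X) \) with the discrete topology; because \( X \) is locally path connected, the quotient \( p \colon X \to \PiZero(X) \) is continuous, and \( \exists \) is simply \( \MetricExp(p) \) (respectively \( \TopExp(p) \)), so functoriality gives continuity.

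For the weak equivalence, since each path component \( X_i \) is clopen in \( X \) one has \( X = \coprod_{i \in \PiZero(X)} X_i \) topologically. The exponential property of \( \MetricExp \) then yields \( \MetricExp(X) = \SmallFiniteProduct_{i \in \PiZero(X)} \MetricExp(X_i) \) together with \( \bigoplus_{\PiZero(X)} \IdemTwo = \SmallFiniteProduct_{i \in \PiZero(X)} \MetricExp(\ast) \), and under these identifications \( \exists \) becomes the \( \SmallFiniteProduct \) of the component-wise maps \( \MetricExp(X_i) \to \MetricExp(\ast) \). Because \( \SmallFiniteProduct \) is a filtered colimit of finite products along closed embeddings (each \( \{\emptyset\} \) is the complement of the open subset \( [X_i] \) in \( \MetricExp(X_i) \), so is closed), it suffices to treat the single-component case; for \( \TopExp \) the same reduction applies using Proposition \ref{Proposition: almost an exponential} to bridge the failure of the strict exponential property in the non-core-compact case.

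Assume then that \( X \) is path connected. For the topological exponential, \( \{\emptyset\} \) is a clopen isolated point of \( \TopExp(X) \), so \( \TopExp(X) = \{\emptyset\} \sqcup \TopRan(X) \), and by the preceding lemma \( \TopRan(X) \) is path connected. Equipped with set-union, \( \TopRan(X) \) is an idempotent commutative topological monoid, so for \( n \geq 1 \) the homotopy group \( \pi_n(\TopRan(X)) \) carries, besides its usual group structure, a second operation \( \wedge \) induced by \( \cup \), which is associative, idempotent, and satisfies the interchange relation \( (ab) \wedge (cd) = (a \wedge c)(b \wedge d) \) with the group multiplication (by a direct pointwise verification); Lemma \ref{lemma: trivial group} then forces \( \pi_n(\TopRan(X)) = 0 \), so \( \TopRan(X) \) is weakly contractible and the map to \( \IdemTwo = \{0, 1\} \) is a weak equivalence. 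For \( \MetricExp(X) \), every open subset containing \( \emptyset \) is the whole space, so the map \( H \colon \MetricExp(X) \times [0, 1] \to \MetricExp(X) \) with \( H(S, 0) = \emptyset \) and \( H(S, t) = S \) for \( t > 0 \) is continuous and contracts \( \MetricExp(X) \) onto \( \{\emptyset\} \); the same holds for \( \MetricExp(\ast) \), so the component-wise map is a map between contractible spaces and hence a weak equivalence.

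The main obstacle I anticipate is verifying that the \( \SmallFiniteProduct \) of single-component weak equivalences remains a weak equivalence when \( \PiZero(X) \) is uncountable: this requires the inclusions \( \prod_{J} \MetricExp(X_i) \hookrightarrow \prod_{J'} \MetricExp(X_i) \) (for \( J \subset J' \) finite) to be cofibration-like enough for \( \pi_n \) to commute with the colimit, which I would justify from the closed-embedding property above together with the fact that each \( \MetricExp(X_i) \) is well-pointed at \( \emptyset \). A secondary subtlety for \( \TopExp \) in the non-core-compact case is that the exponential property is only a continuous bijection rather than a homeomorphism, which I would handle using the previously established sequential-continuity result to directly compare weak homotopy types.
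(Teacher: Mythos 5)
There is a genuine gap, and it sits at the heart of the metric case. You assert that for path connected \( X \), ``every open subset containing \( \emptyset \) is the whole space'', and you use this to contract \( \MetricExp(X) \) onto \( \{[\emptyset]\} \). That density of the empty configuration is a feature of the \emph{minimal} exponential \( \ExpMin(X) \), not of \( \MetricExp(X) \): in the metric exponential the empty configuration is at distance \( +\infty \) from every nonempty configuration, so \( \{[\emptyset]\} \) is \emph{clopen} and \( \MetricExp(X) = \{[\emptyset]\} \amalg \MetricRan(X) \) as spaces. Your homotopy \( H \) is therefore not continuous. Worse, if your claim were correct the theorem you are proving would be false: for path connected \( X \) the target is the discrete two-point space \( \IdemTwo \), and a contractible space cannot be weakly equivalent to it. What is actually needed is that \( \MetricRan(X) \) is path connected (the preceding lemma) and has trivial higher homotopy groups; the latter follows from exactly the argument you deploy for \( \TopRan(X) \) — the union map makes each \( \PiN{n}(\MetricRan(X), S) \) into a group with an extra associative, idempotent operation satisfying the interchange law, which is killed by Lemma \ref{lemma: trivial group} — and here the multiplication \emph{is} genuinely continuous because \( \MetricExp \) is an honest exponential functor. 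So the fix is to run your \( \TopRan \) argument verbatim on \( \MetricRan \), and to drop the contraction entirely.

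Two smaller points. For \( \TopExp \) your phrase ``is an idempotent commutative topological monoid'' overstates the situation: the multiplication requires inverting the comparison map \( \TopExp(X \amalg X) \to \TopExp(X) \times \TopExp(X) \), which is only a continuous bijection in general [\ref{theorem: not an exponential}]; you do flag this and point to the sequential continuity of the inverse [\ref{Proposition: almost an exponential}], which is the correct repair since spheres, balls and the interval are sequential, so the operation still descends to \( \PiZero \) and \( \PiN{n} \). Finally, the colimit-commutation worry you raise for uncountable \( \PiZero(X) \) dissolves once one uses the clopen decomposition \( \MetricExp(X) \IsIsomorphicTo \coprod_{J \subset \PiZero(X) \text{ finite}} \prod_{j \in J} \MetricRan(X_j) \): any sphere lands in a single finite product, so no cofibration or well-pointedness input is needed; alternatively one can, as the paper does, apply the trivial-group lemma directly to \( \PiN{n}(\MetricExp(X), S) \) for the whole space and only use the decomposition to compute \( \PiZero \).
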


\begin{proof}
	Using that \( \MetricExp \) is an exponential and the
	fact that for each connected component
	\( X_i \subset X \),
	\( \MetricExp(X_i) \) is the disjoint union of
	\( [\emptyset] \) and \( \MetricRan(X_i) \),
	\( \MetricExp(X) \) splits as
	\[
		\textstyle
		\MetricExp(X)
		\IsCanonicallyIsomorphicTo
		\SmallFiniteProduct{i \in \PiZero(X)} \MetricExp(X_i)
		\IsCanonicallyIsomorphicTo
		\coprod_{\substack{J \subset \PiZero(X) \\
		J \text{ finite}}} \prod_{j \in J} \MetricRan(X_j)
	\]
	immediately showing that the map \( \exists \) is continuous
	and that \( \PiZero(\exists) \) is a bijection.
	Let \( S \subset X \) be a finite subset. Then, since
	\( S \cup S = S \), the monoid structure of \( \MetricExp(X) \)
	induces an associative and idempotent map
	\[
		\PiN n (\MetricExp(X), S) \times
		\PiN n (\MetricExp(X), S)
		\longrightarrow
		\PiN n (\MetricExp(X), S)
	\]
	which satisfies the exchange property, for every \( n > 0 \).
	As a consequence each of these groups is trivial
	\UnskipRef{lemma: trivial group}.

	When \( X \) is separated, the canonical bijection
	\[
		\textstyle
		\TopExp(X)
		\longrightarrow
		\SmallFiniteProduct{i \in \PiZero(X)} \TopExp(X_i)
	\]
	is sequentially continuous with continuous inverse
	\UnskipRef{Proposition: almost an exponential}
	and thus one has
	\[
		\textstyle
		\PiZero(\TopExp(X))
		\IsCanonicallyIsomorphicTo
		\PiZero\left(\SmallFiniteProduct{i \in \PiZero(X)}
		\TopExp(X_i)\right)
	\]
	because the segment \( [0,1] \) is a sequential space.
	Since spheres and balls are also sequential spaces,
	the sequentially continuous map
	\( \TopExp(X) \times \TopExp(X) \to \TopExp(X) \)
	still induces maps
	\[
		\PiN n (\MetricExp(X), S) \times
		\PiN n (\MetricExp(X), S)
		\longrightarrow
		\PiN n (\MetricExp(X), S)
	\]
	so using the same proof as for the metric case, we see that
	\( \exists \From \TopExp(X) \to \oplus_{i \in \PiZero(X)}
	\IdemTwo \) is a weak equivalence.
\end{proof}

\section{Interlude: minimal enclosing balls}

Given a normed vector space \( V \) and a
proper finite subset \( S \subset V \),
a minimal enclosing ball for \( S \) is
a closed ball \( B \subset V \) which
contains \( S \) and such that any other
ball containing \( S \) have a bigger radius.

\begin{figure}
	\begin{tikzpicture}
		\draw[very thick] (0,0) circle (2cm);
		\draw[fill] (60:2) circle (1mm);
		\draw[fill] (172:2) circle (1mm);
		\draw[fill] (276:2) circle (1mm);
		\draw[fill] (46:1.7) circle (1mm);
		\draw[fill] (143:1.4) circle (1mm);
		\draw[fill] (243:0.9) circle (1mm);
	\end{tikzpicture}
	\caption{The enclosing circle of a finite set of
	points in the plane.}
\end{figure}
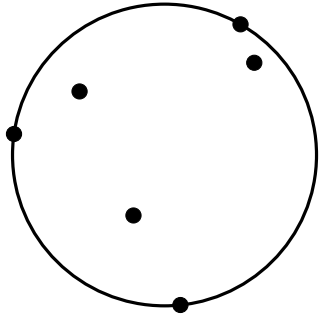

Using classic convex optimization results, one can show
the existence and uniqueness of a minimal
enclosing ball in the case of rotund reflexive normed vector spaces
\cite{doi:10.1007/s11590-012-0483-7}.
One may wonder whether the center \( \Center_S \) and the radius
\( \Radius_S \) of the minimal enclosing ball of a proper
finite subset \( S \) vary continuously with \( S \).
This question is naturally posed using the metric exponential
\( \MetricExp(V) \).

For such a general space as a reflexive vector space, one can
only show that the center \( \Center_S \) varies continuously
with \( S \) \emph{for the weak topology} of \( V \).
The continuity of the center becomes strong if one instead
considers a restricted version of the minimal enclosing ball
problem.
This is what we shall see here.

\begin{definition}[(Restricted minimal enclosing ball)]
	Let \( V \) be a normed vector space and let \( S \subset V \)
	be a proper finite subset of \( V \).
	A restricted minimal enclosing ball
	is a closed ball \( B \subset V \) containing \( S \) and
	whose center belongs to the convex hull \( \Conv(S) \)
	of \( S \), such that, any other ball with center in
	\( \Conv(S) \) and containing \( S \) have a bigger radius.
\end{definition}

\begin{remark}
	In a Hilbert space \( H \),
	the restricted minimal enclosing ball
	of \( S \subset H \)
	coincides with its minimal enclosing ball.
\end{remark}

Restricted minimal enclosing balls might not be unique for a given
norm.
We shall then restrict our attention to spaces that can be endowed
with a norm with strictly convex unit ball.

\begin{definition}[(Rotund vector space)]
	We shall say that a topological vector space is
	rotund if its topology can be induced by a norm
	for which the closed unit ball is strictly convex: the
	equation
	\[
		\norm{x}= \norm{y}= \norm{\frac{x+y}{2}}
	\]
	holds only when \( x = y \).
	By extension, we shall say that such a norm \emph{is rotund}.
\end{definition}

\begin{example}
	Finite dimensional vector spaces are rotund.
	More generally separable complete normable spaces
	are rotund
	\cite[Thm. 9]{doi:10.1090/s0002-9947-1936-1501880-4}.
	The space \( \lInfinity \) is not rotund
	\cite[Thm. 8]{doi:10.1090/s0002-9947-1955-0067351-1}.
	Every reflexive normed vector space is rotund
	\cite[{Cor. 1 (i)}]{doi:10.1090/s0002-9904-1966-11606-3}.
\end{example}

\begin{lemma}
	Let \( V \) be a normed vector space.
	The correspondence sending
	\( S \in \MetricRan(V) \) to its convex
	hull \( \Conv(S) \subset V \) is
	continous.
\end{lemma}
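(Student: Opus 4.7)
The plan is to show that the map is in fact $1$-Lipschitz once we equip the target (the collection of nonempty compact convex subsets of $V$) with the Hausdorff distance $d_H$ induced by the norm; continuity then follows immediately.

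First I would spell out the target: for a proper finite $S \subset V$, $\Conv(S)$ is the image of the compact simplex of barycentric coefficients under a continuous linear map, hence a nonempty compact convex subset of $V$. The appropriate distance on such subsets is the Hausdorff distance
\[
d_H(A, B) \coloneqq \max\Bigl\{\sup_{a \in A}\inf_{b \in B}\norm{a-b},\ \sup_{b \in B}\inf_{a \in A}\norm{a-b}\Bigr\},
\]
which on finite subsets restricts exactly to the distance $D$ defining $\MetricRan(V)$.

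The core estimate is then $d_H(\Conv(S),\Conv(T)) \leq D(S,T)$. To prove it, fix $x \in \Conv(S)$ and write $x = \sum_{s \in S} \lambda_s\, s$ with $\lambda_s \geq 0$ and $\sum \lambda_s = 1$. For each $s \in S$ choose $t(s) \in T$ realising $\min_{t \in T}\norm{s-t}$, so $\norm{s - t(s)} \leq D(S,T)$. Setting $y \coloneqq \sum_{s} \lambda_s\, t(s) \in \Conv(T)$, the triangle inequality for the norm gives
\[
\norm{x - y} = \Bigl\Vert\sum_s \lambda_s\bigl(s - t(s)\bigr)\Bigr\Vert \leq \sum_s \lambda_s\, \norm{s - t(s)} \leq D(S,T).
\]
Swapping the roles of $S$ and $T$ yields the symmetric inequality, whence $d_H(\Conv(S),\Conv(T)) \leq D(S,T)$, i.e., $\Conv$ is $1$-Lipschitz and in particular continuous.

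No step looks hard here: the only subtlety is choosing a target category/topology that makes the statement meaningful, and checking that the finite sums produced by the convex combination argument land in $\Conv(T)$ (which follows because the coefficients $\lambda_s$ are reused unchanged and the map $s \mapsto t(s)$ need not be injective, so $y$ is genuinely a convex combination of points of $T$). Everything else is a direct application of the triangle inequality and the definition of $D$.
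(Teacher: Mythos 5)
Your proof is correct, but it takes a genuinely different route from the paper. The paper reads ``continuous'' in the sense of correspondences (set-valued maps) and directly verifies the two halves of that definition --- upper hemicontinuity via the identity \( \{S \mid \Conv(S) \subset \Ball(v,\epsilon)\} = \Ball(\{v\},\epsilon) \), and lower hemicontinuity via a small perturbation argument --- because that is literally the hypothesis fed into Berge's Maximum Theorem in the subsequent theorem on minimal enclosing balls. You instead prove the quantitative statement that \( S \mapsto \Conv(S) \) is \( 1 \)-Lipschitz for the Hausdorff metric on nonempty compact convex subsets, via the standard convex-combination transport \( x = \sum_s \lambda_s s \mapsto y = \sum_s \lambda_s t(s) \); this estimate is correct and in fact sharp. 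The two formulations are reconciled by the standard fact that a compact-valued correspondence into a metric space is continuous (upper and lower hemicontinuous) if and only if it is continuous as a map into the hyperspace of nonempty compacta with the Hausdorff metric; you implicitly rely on this when you say continuity ``follows immediately,'' and you do supply the needed compactness of \( \Conv(S) \), so the only thing to add for the paper's intended application is an explicit citation of that equivalence. What your approach buys is a cleaner, quantitative bound with no hemicontinuity bookkeeping; what the paper's approach buys is that the output is stated in exactly the form the Maximum Theorem consumes.
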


\begin{proof}
	It is upper hemicontinuous: let \( v \in V \) and let
	\( \epsilon > 0 \).
	One has
	\[
		\Conv^\mathrm{u}(\Ball(v, \epsilon))
		\coloneqq
		\{ S \mid \Conv(S) \subset \Ball(v, \epsilon) \}
		= \Ball(\{v\}, \epsilon)
	\]
	showing that the upper inverse image preserves opens.

	It is lower hemicontinuous: let
	\[
		S \in \Conv^\mathrm{l}(\Ball(v, \epsilon))
		\coloneqq
		\{ S \mid \Conv(S) \cap \Ball(v, \epsilon)
		\neq \emptyset \}
	\]
	then there exists \( s \in S \) such that
	\( \norm{s - v} < \epsilon \).
	Let \( \delta = \epsilon - \norm{s-v} \), then
	\( \Ball(S, \delta)
	\subset \Conv^\mathrm{l}(\Ball(v, \epsilon)) \)
	showing that the lower inverse image preserves opens.
\end{proof}

\begin{lemma}
	The function
	\[
		\begin{tikzcd}[row sep = tiny]
			V \times \MetricRan(V) \rar & \Reals_+ \\
			(v, S) \rar[mapsto] & \max_{s \in S} \norm{v-s}
		\end{tikzcd}
	\]
	is continuous.
\end{lemma}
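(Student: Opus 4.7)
The plan is to show the stronger statement that \( f(v,S) \coloneqq \max_{s \in S} \norm{v-s} \) is \( 1 \)-Lipschitz with respect to the product metric \( \norm{v-v'} + D(S,S') \) on \( V \times \MetricRan(V) \), which induces the product topology.

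First I would handle the \( V \)-variable: for fixed \( S \), the reverse triangle inequality gives \( \big|\norm{v-s} - \norm{v'-s}\big| \leq \norm{v-v'} \) for every \( s \in S \), and then the elementary fact that \( |\max_i a_i - \max_i b_i | \leq \max_i |a_i - b_i| \) yields
\[
	\big| f(v,S) - f(v',S) \big| \leq \norm{v-v'}.
\]

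Next I would handle the \( \MetricRan(V) \)-variable for fixed \( v \). The key estimate is that for any \( s \in S \), there exists \( s_0 \in S_0 \) with \( \norm{s-s_0} \leq D(S,S_0) \), by definition of the Hausdorff distance. The triangle inequality then gives \( \norm{v-s} \leq \norm{v-s_0} + D(S,S_0) \leq f(v,S_0) + D(S,S_0) \), and taking the maximum over \( s \in S \) shows \( f(v,S) \leq f(v,S_0) + D(S,S_0) \); by symmetry,
\[
	\big| f(v,S) - f(v,S_0) \big| \leq D(S,S_0).
\]
Combining the two estimates by the triangle inequality gives the desired Lipschitz bound and hence continuity.

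I expect no real obstacle: the only subtle point is that \( D \) is a genuine distance only on \( \MetricRan(V) \) (not on \( \MetricExp(V) \), where \( [\emptyset] \) is at infinite distance from any non-empty configuration), but restricting to the Ran space is exactly the hypothesis of the statement, so the Hausdorff-style inequality applies without issue.
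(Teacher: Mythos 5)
Your proof is correct, and it takes a genuinely different (and in fact stronger) route than the paper's. The paper argues sequentially: given \( (v_n, S_n) \to (v, S) \), it chooses \( \epsilon \) small enough that each point of \( S_n \) clusters around a unique point of \( S \), partitions \( S_n = \cup_{s \in S} S_n(s) \), and bounds the difference of the two maxima by \( 2\epsilon \) using this partition. You instead prove the clean quantitative estimate
\[
	\bigl| \max_{s \in S} \norm{v - s} - \max_{s' \in S'} \norm{v' - s'} \bigr|
	\leq \norm{v - v'} + D(S, S'),
\]
i.e.\ \( 1 \)-Lipschitz continuity for the sum metric, using only the reverse triangle inequality, the elementary bound \( |\max_i a_i - \max_i b_i| \leq \max_i |a_i - b_i| \), and the fact that each \( s \in S \) has some \( s' \in S' \) with \( \norm{s - s'} \leq D(S,S') \) (the minimum over the finite nonempty set \( S' \) being attained). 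This avoids the clustering argument and the implicit choice of \( \epsilon \) below the separation of \( S \), makes no appeal to sequences, and yields a sharper conclusion; your remark that \( D \) is a genuine finite-valued metric on \( \MetricRan(V) \) (unlike on \( \MetricExp(V) \), where \( [\emptyset] \) sits at infinite distance) is exactly the point needed for the Hausdorff-style inequality to apply, and it matches the paper's definition of the metric exponential of a metric space.
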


\begin{proof}
	Consider a converging sequence
	\( (v_n, S_n) \to_{n \to ∞} (v, S) \).
	For \( \epsilon > 0 \) small enough, if \( S_n \) is
	at distance less than \( \epsilon \) from \( S \), then
	\( S_n \) must have more points that \( S \) and for each
	\( x \in S_n \), there is a unique \( s_x \in S \) such that
	\( \norm{x-s_x} \leq \epsilon \).
	This gives us a partition of \( S_n \) as
	\( S_n = \cup_{s \in S} S_n(s) \).
	Then for \( \norm{v_n -v} \leq \epsilon \)
	and \( D(S_n, S) \leq \epsilon \) one has
	\begin{align*}
		\Big|\max_{s \in S} \norm{v-s}
		- \max_{t \in S_n} \norm{v_n - t}\Big|
		&
		\leq
		\max_{s \in S} \Big|\norm{v-s} - \max_{t \in S_n(s)}
		\norm{v_n - t}\Big| \\
		&
		\leq
		\max_{s \in S}\left(\norm{v-v_n} + 
		\max_{t \in S_n(s)} \norm{s - t}\right) \\
		& \leq 2 \epsilon
	\end{align*}
	showing that \( (v, S) \mapsto
	\max_{s \in S} \norm{v-s} \) is continuous.
\end{proof}

\begin{theorem}[(Solution to the restricted
	minimal enclosing ball problem)]
	Let \( V \) be a vector space endowed with a rotund norm.
	Then every proper
	finite subset \( S \subset V \) admits
	a restricted minimal enclosing ball of radius
	\[
		\Radius_S \coloneqq
		\inf_{v \in \Conv(S)} \max_{s \in S} \norm{s-v}
	\]
	and this ball is unique.

	Moreover, the function
	\[
		\MetricRan(V) \longrightarrow V \times \Reals_+
	\]
	mapping a proper finite subset \( S \subset V \)
	to the pair \( (\Center_S, \Radius_S) \)
	where \( \Center_S \) denotes the center of the
	restricted
	minimal enclosing ball of \( S \), is continuous.
\end{theorem}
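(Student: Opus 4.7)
The plan is to treat existence, uniqueness, and continuity separately, with the continuity step resting on Berge's maximum theorem. For \emph{existence} at a fixed proper finite \( S \subset V \), I would observe that \( f_S \colon v \mapsto \max_{s \in S} \|s-v\| \) is continuous (and even convex) on \( V \), while \( \Conv(S) \) is the image of a standard simplex under a continuous affine map, hence nonempty and compact. Therefore \( f_S \) attains its infimum \( \Radius_S \) on \( \Conv(S) \), yielding a restricted minimal enclosing ball.

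For \emph{uniqueness} I would suppose \( c_1, c_2 \in \Conv(S) \) both achieve \( \Radius_S \) and consider the midpoint \( m \coloneqq (c_1+c_2)/2 \), which still lies in the convex set \( \Conv(S) \). For every \( s \in S \) one has \( \|s-m\| \leq \tfrac{1}{2}(\|s-c_1\| + \|s-c_2\|) \leq \Radius_S \), so \( m \) is itself a minimizer. Picking \( s^\ast \in S \) with \( \|s^\ast - m\| = \Radius_S \) forces equality throughout the chain at \( s^\ast \), yielding
\[ \|s^\ast - c_1\| = \|s^\ast - c_2\| = \|\tfrac{1}{2}((s^\ast-c_1)+(s^\ast-c_2))\| = \Radius_S. \]
Rotundness of the norm then gives \( s^\ast - c_1 = s^\ast - c_2 \), i.e.\ \( c_1 = c_2 \).

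For \emph{continuity} I would apply Berge's maximum theorem to the parametric optimization \( \inf_{v \in \Conv(S)} f(v, S) \), where \( f(v, S) = \max_{s \in S} \|s-v\| \). The function \( f \) is continuous on \( V \times \MetricRan(V) \) by the second lemma above, and the correspondence \( S \mapsto \Conv(S) \) has nonempty compact values and is both upper and lower hemicontinuous by the first lemma. Berge's theorem then delivers continuity of the value function \( S \mapsto \Radius_S \) together with upper hemicontinuity of the argmin correspondence \( S \mapsto \{ v \in \Conv(S) : f_S(v) = \Radius_S \} \). Uniqueness collapses this correspondence to a single-valued map \( S \mapsto \Center_S \), and a single-valued upper hemicontinuous correspondence into the Hausdorff space \( V \) is automatically continuous as a function.

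The main delicate point I anticipate is the Berge step: one must verify that the continuity of \( \Conv \) established in the earlier lemma (both hemicontinuities separately) matches exactly what Berge's theorem requires, and that the passage from upper hemicontinuity of the argmin to continuity of the single-valued function \( \Center \) is legitimate. Everything else — compactness of \( \Conv(S) \), the triangle inequality, and the deployment of rotundness — should be routine.
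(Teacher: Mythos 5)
Your proposal is correct and follows the same overall architecture as the paper: existence from compactness of \( \Conv(S) \) and continuity of \( v \mapsto \max_{s\in S}\|s-v\| \), continuity of \( (\Center_S,\Radius_S) \) from Berge's Maximum Theorem applied to the two preceding lemmas, with uniqueness collapsing the upper hemicontinuous argmin correspondence to a continuous single-valued map. The one place where you genuinely diverge is the uniqueness step, and your version is the more careful one. The paper asserts that rotundness makes \( v \mapsto \max_{s \in S} \|s-v\| \) \emph{strictly} convex and deduces uniqueness from that; this is not literally true (already for \( V=\Reals \) and \( S=\{0,1\} \) the function \( \max(|v|,|v-1|) \) is piecewise affine, hence not strictly convex, even though its minimizer is unique). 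Your midpoint argument --- take two minimizers \( c_1, c_2 \), note the midpoint \( m \) is again a minimizer, pick \( s^\ast \) realizing the max at \( m \), force equality in \( \|s^\ast-m\| \leq \tfrac12(\|s^\ast-c_1\|+\|s^\ast-c_2\|) \), and invoke rotundness exactly in the form \( \|x\|=\|y\|=\|\tfrac{x+y}{2}\| \Rightarrow x=y \) --- proves the uniqueness claim directly without needing strict convexity of the objective, and so actually repairs the weak point in the published argument. (The degenerate case \( \Radius_S = 0 \), i.e.\ \( S \) a singleton, is trivial and worth a half-sentence.) The Berge step is deployed identically in both proofs and the hypotheses match the cited lemmas, so no gap there.
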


\begin{proof}
	Because \( S \) is finite, its convex hull
	\( \Conv(S) \) is compact in \( V \). As a consequence
	\( \Radius_S \) is finite and \( S \) admits
	a restricted minimal enclosing ball.
	It is unique: because the norm is rotund,
	the function
	\( v \mapsto \max_{s \in S} \norm{s - v} \) is strictly
	convex, so its infimum on the convex hull \( \Conv(S) \)
	is attained at a unique point.

	The continuity of \( \Center_S \) and \( \Radius_S \) can be
	obtained using the Maximum Theorem
	\cite[17.31]{doi:10.1007/3-540-29587-9}:
	the correspondence \( S \mapsto \Conv(S) \) is continuous
	with compact values and 
	the function
	\( (v, S) \mapsto \max_{s \in S} \norm{s-v} \) is continuous
	by the previous lemmas.
\end{proof}

\section{Stratification of the exponentials}

The exponential of a set \( X \) admits a natural
counting function \( \Exp(X) \to \Naturals \) sending each
finite subset \( S \subset X \) to its cardinal \( |S| \).
In this section we study the exponentials endowed
with the stratification given by this counting function.
We shall show that, under some conditions on \( X \),
the metric exponential \( \MetricExp(X) \) is conically
stratified.
As a consequence the \( \infty \)-categories
of constructible hypersheaves on
\( \MetricExp(X) \) and \( \TopExp(X) \) are equivalent, which leads
to the following statement: locally constant factorization algebras
on \( X \) in the sense of Beilinson-Drinfeld are equivalent
to locally constant factorization algebras on \( X \)
in the sense of Lurie.

\subsection{The exponentials as stratified spaces}

There are several inequivalent definitions of stratified spaces.
The following one is a mild one, introduced by Lurie
\cite[A.5.1]{arXiv:0911.0018}.

\begin{definition}[(Stratified space)]
	\label{strat}
	A stratified space is the data of a poset \( P \),
	endowed with the topology whose open sets are the
	upward-closed subsets,
	and a continuous map \( f \From X \to P \).
	For \( p \in P \), we shall write \( X_p \) for the
	fiber \( f^{-1}(p) \).

	A morphism of stratified spaces is a commutative square
	\[
		\begin{tikzcd}[ampersand replacement=\&]
			X
			\arrow[r, ""]
			\arrow[d, "", swap]
			\& Y
			\arrow[d, ""] \\
			P
			\arrow[r, "", swap]
			\& Q
		\end{tikzcd}
	\]
	where the top map is continuous and the bottom map
	is a poset map.
\end{definition}

In our case, we select the poset \( \PointedOmega \).
Since the empty configuration is dense in \( \ExpMin(X) \), the
minimal exponential shall never be stratified over \( \PointedOmega \)
or even \( \OrdinalOmega \) as soon as \( X \) is not empty.
We shall thus only have a look at the two other exponentials.

\begin{proposition}
	When \( X \) is separated,
	the canonical maps
	\[
		\TopExp(X) \longrightarrow
		\MetricExp(X) \longrightarrow \PointedOmega
	\]
	are continuous.
	Moreover, one has homeomorphisms
	\[
		\TruncatedTopExp n (X) = \TruncatedMetExp n (X)
	\]
	for every \( n \in \PointedOmega \).
\end{proposition}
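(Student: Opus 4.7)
My plan is to treat the three claims in order. Continuity of \( \TopExp(X) \to \MetricExp(X) \) was already shown in the preceding proposition, so I would focus on the continuity of the cardinality map and on the identification of the truncations.

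For continuity of \( \MetricExp(X) \to \PointedOmega \), the preimage of each upward-closed subset of \( \PointedOmega \) must be open in \( \MetricExp(X) \). The essential case is an upper set \( \{n, n+1, \dots\} \) with \( n \geq 1 \), whose preimage is \( \{S : |S| \geq n\} \). Given any \( S = \{s_1, \dots, s_k\} \) of cardinality \( k \geq n \), the Hausdorff property of \( X \) yields pairwise disjoint open neighborhoods \( U_i \ni s_i \), and then the basic open \( [U_1, \dots, U_k] \) is a metric-exponential neighborhood of \( S \) entirely contained in \( \{S : |S| \geq n\} \), because any configuration meeting each of the \( k \) pairwise disjoint \( U_i \) has at least \( k \geq n \) points.

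For \( \TruncatedTopExp{n}(X) = \TruncatedMetExp{n}(X) \), the two sides share the same underlying set, and the continuous identity \( \TopExp(X) \to \MetricExp(X) \) restricts to a continuous bijection between the truncations; what remains is to show this bijection open. I would invoke \UnskipRef{Lemma: perfect}, by which \( q \colon X^n \twoheadrightarrow \TruncatedTopExp{n}(X) \) is a closed quotient map, so every open \( V \) of the topological truncation is of the form \( q(W) \) for its saturated open preimage \( W \subset X^n \). For each \( S = \{s_1, \dots, s_k\} \in V \) with \( k \leq n \), I would pick a representing tuple \( t = (s_1, \dots, s_k, s_1, \dots, s_1) \in W \), refine a product neighborhood of \( t \) inside \( W \) to a box \( V_1 \times \cdots \times V_n \) with \( V_1, \dots, V_k \) pairwise disjoint (using separation of \( X \)) and \( V_{k+1}, \dots, V_n \subset V_1 \), and argue that \( [V_1, \dots, V_k] \cap \TruncatedMetExp{n}(X) \) is a metric-exponential neighborhood of \( S \) contained in \( V \).

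The hard part will be this last containment: any \( S' \in [V_1, \dots, V_k] \cap \TruncatedMetExp{n}(X) \) has at most \( n \) points and meets each of the disjoint \( V_i \), but could in principle have extra points lying outside \( V_1 \cup \cdots \cup V_k \) which cannot be placed into a tuple of the prescribed product form. Overcoming this should involve iterating the construction over all surjective representing patterns \( \{1, \dots, n\} \twoheadrightarrow S \) and taking a common metric-open refinement, or exploiting that \( q \) is \emph{closed} (not merely continuous) to promote the local product neighborhood of \( t \) to a saturated one that absorbs the leftover coordinates into repetitions of chosen representatives.
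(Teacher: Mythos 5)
Your handling of the first two claims is correct and is essentially the paper's own argument: the paper establishes continuity of the cardinality map by showing each \( \TruncatedMetExp{n}(X) \) is closed in \( \MetricExp(X) \), using exactly the disjoint-neighborhood trick you use to show that the preimage of each up-set is open. (Both you and the paper pass quickly over the isolated point \( 0 \in \PointedOmega \), whose preimage \( \{[\emptyset]\} \) must be open as well as closed; for the metric definition this is immediate since \( D([\emptyset],T)=+\infty \) for \( T\neq\emptyset \).)

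The third claim is where the gap lies, and you have located it correctly but not closed it. The paper does not argue directly: it invokes Handel's Lemma 2.11 \cite[2.11]{hjm:28-4}, asserting that for separated \( X \) a suitable family of Vietoris-type opens is a basis of the quotient topology on \( \TruncatedTopExp{n}(X) \). Your plan via the closed quotient map \( q \From X^n \to \TruncatedTopExp{n}(X) \) is reasonable, but the ``hard part'' you flag is not merely hard: with neighborhoods of the form \( [V_1,\dots,V_k] \), which only require a configuration to \emph{meet} each \( V_i \), the desired containment genuinely fails, and no iteration over representing patterns can repair it. Take \( X=\Reals \), \( n=2 \), \( S=\{0\} \), and the \( \SymmetricGroup{2} \)-saturated open \( W=\{(a,b): |a-b|<1\} \); then \( q(W) \) is an open neighborhood of \( \{0\} \) in \( \TruncatedTopExp{2}(\Reals) \), yet every \( [V_1,\dots,V_k] \) containing \( \{0\} \) also contains \( \{0,17\}\notin q(W) \). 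The missing ingredient is the upper half of the Vietoris/Hausdorff-metric condition: the basic neighborhoods you must use (the ones a ball for the metric \( D \) actually carves out around \( S \), and the ones in Handel's lemma) additionally impose \( S'\subseteq V_1\cup\dots\cup V_k \). Once that constraint is added, your construction does close up, \emph{provided} you also perform the refinement over all surjections \( \{1,\dots,n\}\twoheadrightarrow S \) that you mention: each point of \( S' \) then lies in exactly one of the disjoint \( V_i \), and choosing a pattern \( \phi \) with \( |\phi^{-1}(i)|\geq |S'\cap V_i| \) produces a representing tuple inside \( W \). So the gap is real but localized: it is the containment condition, not the pattern bookkeeping, that your two proposed fixes fail to supply.
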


\begin{proof}
	We need to show that \( \TruncatedMetExp n (X) \) is a closed
	subset of \( \MetricExp(X) \) for every \( n \in \PointedOmega \).
	For \( n = 0 \) this is obvious.
	Let \( S \) be a non-trivial finite subset of \( X \).
	Since \( X \) is separated, one can find a disjoint family of
	open neighborhoods \( \{s \in U_s\}_{s \in S} \).
	Then \( [U_s]_{s \in S} \) becomes an open neighborhood of \( S \)
	in \( \MetricExp(X) \) which lies in the complement of
	\( \TruncatedMetExp n (X) \).

	When \( X \) is separated, Handel has shown that
	the opens
	of the form \( [U_i]_{i \in I} \cap \TruncatedTopExp n (X) \)
	form a basis
	of the topology of \( \TruncatedTopExp n (X) \)
	\cite[2.11]{hjm:28-4}, giving us the homeomorphism
	\( \TruncatedTopExp n (X) = \TruncatedMetExp n (X) \)
	for every \( n \in \PointedOmega \).
\end{proof}

\subsection{Cones and joins}

\begin{definition}[(Open cone)]
	For a topological space \( X \), the \emph{open cone}
	of \( X \) is the set 
		\[
			\Cone(X)
			\coloneqq \{ 0 \} \amalg (\Reals_+^\ast \times X)
		\]
	with topology defined as follows:
	a subset \( U \subset \Cone(X) \)
	is open if and only if
	\( U \cap (\Reals_+^\ast \times X) \) is open,
	and if \( 0 \in U \), then
	\( (0, \varepsilon) \times X \subset U \)
	for some positive real number \( \varepsilon \). 

	If \( X \) is stratified over a poset \( P \), then
	\( \Cone(X) \) is naturally stratified over
	the poset \( P^\triangleleft \) obtained from \( P \) by
	adding a new element smaller than every other element of
	\( P \).
\end{definition}

\begin{warning}
	One should not confuse the \emph{cone}
	on \( X \) with the \emph{collapsed rectangle}
	defined as the quotient
	\( \Reals_+ \times X / \{0\} \times X \).
	When \( X \) is compact and separated, the cone on
	\( X \) and the collapsed rectangle on \( X \) are
	homeomorphic. This is no longer true in the general
	case: the cone on the open interval
	\( (0,1) \) can be embedded in \( \Reals^2 \), whereas
	the collapsed rectangle on \( (0,1) \) is not
	metrizable.

	If \( (X,d) \) is a metric space, the topology of the cone
	\( \Cone(X) \) is metrizable by letting
	\( d((\lambda, x),(\mu, y) = \max(|\lambda - \mu|, d(x,y)) \)
	and by adding \( d(0, (\lambda, x)) = \lambda \).
\end{warning}

\begin{definition}[(Join)]
	Given two posets \( P \) and \( Q \), their
	join is the poset
	\[
		P \Join Q \coloneqq P \amalg (P \times Q) \amalg Q
	\]
	where one adds to the disjoint sum the additional relations
	\( p < (p, q) \) and \( q < (p, q) \) for every
	\( (p, q) \in P \times Q \).

	Let \( X \to P \) and \( Y \to Q \) be
	two stratified topological spaces.
	Their join \( X \Join Y \) is the set
	\[
		X \Join Y \coloneqq X \amalg (X \times (0,1) \times Y) \amalg Y
	\]
	where a basis of opens is given by the opens
	\( U \subset X \times (0,1) \times Y \) together with
	opens \( X \amalg X \times (0, \varepsilon) \times V \) with
	\( V \subset Y \) open, and
	opens \( W \times (\delta, 1) \times Y \amalg Y \) with
	\( W \subset X \) open.

	It is naturally stratified over \( P \Join Q \).
\end{definition}

\begin{warning}
		Similarly to what we just said about the cone,
		when \( X \) and \( Y \) are both separated and compact,
		the \emph{join}
		of \( X \) and \( Y \) is homeomorphic to
		the \emph{collapsed brick}
		\( X \times [0,1] \times Y / R \) where
		\( R \) is the relation identifying
		\( X \times \{0\} \times Y \sim X \)
		and
		\( X \times \{1\} \times Y \sim Y \).
		In general, this is no longer the case.
\end{warning}

\begin{proposition}
	\label{Product of cones}
	Let \( X \to P \) and \( Y \to Q \) be two stratified spaces.
	Then there is a homeomorphism
	\[
		\Cone(X) \times \Cone(Y) \IsIsomorphicTo
		\Cone(X \Join Y)
	\]
	over the canonical
	isomorphism \( P^\triangleleft \times Q^\triangleleft
	\IsCanonicallyIsomorphicTo (P \Join Q)^\triangleleft \).
\end{proposition}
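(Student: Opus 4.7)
The plan is to exhibit an explicit bijection \( \Phi \From \Cone(X) \times \Cone(Y) \to \Cone(X \Join Y) \) stratum-by-stratum and verify it is a homeomorphism over the canonical poset isomorphism \( P^\triangleleft \times Q^\triangleleft \IsCanonicallyIsomorphicTo (P \Join Q)^\triangleleft \). Decompose both spaces as disjoint unions of four pieces according to which coordinates lie at the cone vertex. On the left:
\[
	\{(0,0)\} \amalg (\Reals_+^\ast \times X) \times \{0\}
	\amalg \{0\} \times (\Reals_+^\ast \times Y)
	\amalg (\Reals_+^\ast \times X) \times (\Reals_+^\ast \times Y).
\]
On the right, using \( X \Join Y = X \amalg (X \times (0,1) \times Y) \amalg Y \):
\[
	\{0\} \amalg \Reals_+^\ast \times X
	\amalg \Reals_+^\ast \times Y
	\amalg \Reals_+^\ast \times X \times (0,1) \times Y.
\]
I define \( \Phi \) to be the obvious identification on the first three pieces and, on the last, to implement the coordinate change \( (r, s) \leftrightarrow (t, u) \coloneqq (r+s,\, s/(r+s)) \); concretely \( \Phi((r,x),(s,y)) = (r+s, (x, s/(r+s), y)) \). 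This change of variables is a homeomorphism \( \Reals_+^\ast \times \Reals_+^\ast \IsIsomorphicTo \Reals_+^\ast \times (0,1) \), so \( \Phi \) restricts to a homeomorphism on each of the four strata, and the four-way decomposition tautologically matches the stratifications, giving the map over posets.

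The remaining work is continuity of \( \Phi \) and \( \Phi^{-1} \) at the transitions between strata, which I handle by pulling back basic opens. A neighborhood of the cone vertex \( 0 \in \Cone(X \Join Y) \) has the form \( \{0\} \cup (0,\varepsilon) \times (X \Join Y) \); its \( \Phi \)-preimage contains the product cone-vertex neighborhood \( \bigl(\{0\} \cup (0, \varepsilon/2) \times X\bigr) \times \bigl(\{0\} \cup (0, \varepsilon/2) \times Y\bigr) \), since \( r + s < \varepsilon \) whenever \( r, s < \varepsilon/2 \). Conversely, a vertex neighborhood of \( (0,0) \in \Cone(X) \times \Cone(Y) \) is \( \Phi \)-covered by \( \{0\} \cup (0, \varepsilon) \times (X \Join Y) \) via the estimate \( t = r+s \geq \max(r,s) \).

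The most delicate point is continuity at the ``edge'' points \( ((r_0, x_0), 0) \) with \( r_0 > 0 \) (and the symmetric case). Here a neighborhood in the product of cones of the form \( U \times \bigl(\{0\} \cup (0, \delta) \times Y\bigr) \) with \( U \subset \Reals_+^\ast \times X \) an open neighborhood of \( (r_0, x_0) \), must match a join-collar neighborhood of \( X \subset X \Join Y \) inside \( \Cone(X \Join Y) \). The key estimate is that if \( r \) is bounded below by some \( r_0 - \eta > 0 \) on \( U \) and \( s < \delta \), then the join parameter \( u = s/(r+s) \) is bounded above by \( \delta/(r_0 - \eta) \), which can be made smaller than any prescribed \( \varepsilon' \) by shrinking \( \delta \); this translates the cone-vertex condition on the \( \Cone(Y) \)-factor into the collar condition near \( X \). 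The reverse direction is symmetric.

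The hard part will be the book-keeping in this edge-point verification: one must simultaneously control the cone-vertex parameter \( \delta \) of \( \Cone(Y) \), the window in \( \Reals_+^\ast \) on which \( t = r+s \) lies inside a prescribed open, and the collar parameter \( \varepsilon' \) bounding \( u \), while ensuring that the pullback of each kind of basic open of \( \Cone(X \Join Y) \) produces a legitimate basic open of the product of cones. Once the coordinate change is isolated as the sole nontrivial ingredient, the remaining checks reduce to a routine but somewhat tedious case analysis, from which the homeomorphism assertion (and its compatibility with the stratifications) follows.
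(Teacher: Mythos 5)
Your bijection is, up to inversion and relabelling of the join parameter, the same change of coordinates the paper uses (the paper sends \( (\lambda,(x,t,y)) \) to \( ((\lambda t,x),(\lambda(1-t),y)) \); you send \( ((r,x),(s,y)) \) to \( (r+s,(x,s/(r+s),y)) \)), and the overall strategy of a four-way decomposition followed by a check on basic opens is also the paper's. The vertex and interior cases you treat are correct. The gap is in the edge case, which you defer as ``routine but somewhat tedious book-keeping'': it is not routine, and your key estimate \( u = s/(r+s) < \delta/(r_0-\eta) \) controls only the join parameter, which is the one coordinate where no difficulty lies. At a point \( ((r_0,x_0),0) \) with \( 0 \) the cone point of \( \Cone(Y) \), the basic neighborhoods in \( \Cone(X)\times\Cone(Y) \) have the form \( U\times(\{0\}\amalg(0,\delta)\times Y) \): they may shrink the \( X \)-coordinate to a small open \( W\ni x_0 \) but must contain \emph{all} of \( Y \) in the collar. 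The basic neighborhoods of \( (r_0,x_0) \) in \( \Cone(X\Join Y) \), with the join basis as printed in the paper (\( X\amalg X\times(0,\varepsilon)\times V \)), do exactly the opposite: they contain all of \( X \) but may shrink the \( Y \)-coordinate to a proper open \( V \). Bounding \( u \) does nothing to reconcile these two filters, and in fact they cannot be reconciled: for \( X=\{a,b\} \) discrete and \( Y=\ast \), every neighborhood of \( (1,a) \) in \( \Cone(X\Join Y) \) contains \( (1,b) \), while \( ((1,a),0) \) and \( ((1,b),0) \) are separated by opens in \( \Cone(X)\times\Cone(Y) \); so with that basis the bijection is not a homeomorphism in either direction.

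Concretely, the missing step is to identify the collar neighborhoods of \( X \) inside \( X\Join Y \) as the sets \( W\amalg W\times(0,\varepsilon)\times Y \) with \( W\subset X \) open and the collar containing the \emph{whole} of \( Y \) (and symmetrically near \( Y \)); this is the only basis for which the two neighborhood filters at edge points can match, and with it your \( u \)-estimate together with the now-trivial matching of the \( W \)'s does close the case. As written, your sentence ``must match a join-collar neighborhood of \( X\subset X\Join Y \)'' is precisely the assertion that fails for the printed topology, so the proof cannot be completed by book-keeping alone. (Be aware that the paper's own Case 2 has the same slippage --- the set it exhibits as the image of \( (\alpha,\beta)\times(X\amalg X\times(0,\varepsilon)\times V) \) is not the image of that set --- so you cannot repair your argument by transcribing it.)
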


\begin{proof}
	The map sends bijectively tuples
	\( (\lambda, (x, t, y)) \in \Cone(X \Join Y) \)
	to tuples
	\( ((\lambda t, x),(\lambda(1-t), y))
	\in \Cone(X) \times \Cone(Y) \) and obviously
	respects the isomorphism
	\( P^\triangleleft \times Q^\triangleleft
	\IsCanonicallyIsomorphicTo (P \Join Q)^\triangleleft \).
	Let us see why it is open: there are four different
	cases to look at.
	
	\emph{Case 1}: open neighborhoods of the tip
	of \( \Cone(X \Join Y) \). Let \( \varepsilon > 0 \), then
	the open \( \{0\} \amalg (0,\varepsilon) \times (X \Join Y) \)
	is mapped to the open
	\( (\{0\} \amalg (0, \varepsilon) \times X)
	\times (\{0\} \amalg (0, \varepsilon) \times Y) \).

	\emph{Case 2}: open neighborhoods of \( \Cone(X \Join Y) \)
	not containing the tip but including \( X \).
	An open
	of the form
	\( (\alpha, \beta)
	\times (X \amalg X \times (0, \varepsilon) \times V) \)
	with \( 0 < \alpha < \beta \)
	is mapped to the open
	\( (\{0\} \amalg (0, \varepsilon \alpha) \times X)
	\times ((1-\varepsilon)\alpha, (1-\varepsilon)\beta) \times V\).

	\emph{Case 3}: open neighborhoods of \( \Cone(X \Join Y) \)
	not containing the tip but including \( Y \).
	Confere supra.

	\emph{Case 4}: a general open of \( \Cone(X \Join Y) \).
	Let \( U \subset X \), \( V \subset Y \) opens,
	\( 0 < \alpha < \beta \), \( 0 \leq t < s \leq 1 \).
	Then \( (\alpha, \beta) \times U \times (t,s) \times V \)
	is mapped to the open \( (t\alpha, s\beta) \times U
	\times ((1-s)\alpha,(1-t)\beta) \times V \).

	Since the image of basis neighborhoods form a basis of
	neighborhoods of \( \Cone(X) \times \Cone(Y) \),
	one can see that it is a homeomorphism.
\end{proof}

\begin{remark}
	A very similar proposition has been given
	by Ayala, Francis and Tanaka
	using the collapsed rectangle instead of the cone and the
	collapsed brick instead of the join
	\cite[3.8]{doi:10.1016/j.aim.2016.11.032}.
	Of course,
	both propositions agree in the case where both \( X \) and \( Y \)
	are compact and separated.
\end{remark}

\subsection{Conical stratification}

There are many inequivalent notions of ``goodness'' for stratified
space.
The definition below is a mild one introduced by Lurie
\cite[A.5.5]{arXiv:0911.0018}.

\begin{definition}[(Conically stratified space)]
	Let \( f \From X \to A \) be a stratified topological space. 
	One says that \( X \) is conically stratified whenever
	for each \( p \in A \) and each \( x \in X_p \), there exists
	an open neighborhood \( U_p \subset X_p \) of \( x \) and
	a stratified space \( L \) over \( P_{p <} \) such that
	\( U_p \subset X_p \) can be extended to a stratified
	space over the poset map
	\( P_{p<}^\triangleleft \IsCanonicallyIsomorphicTo
	P_{p \leq} \subset P \).
\end{definition}

We have already seen that the minimal exponential
\( \ExpMin(X) \)
is never stratified.
Even though the topological exponential is always
a stratified space over \( \PointedOmega \) when
\( X \) is separated,
it is usually impossible for the topological exponential
\( \TopExp(X) \)
to be conically stratified; conical opens would allow
sequences with unbounded cardinality to converge
\UnskipRef{Remark: converging sequence in colimit topology}
\cite[2.14]{arXiv:2102.12325}.
We shall then restrict our attention to the metric exponential
\( \MetricExp(X) \) and show that it is conically stratified
for a large class of spaces \( X \).

\begin{lemma}
	Let \( V \) be a normed vector space, then
	the function \( \Reals_+ \times V \times \MetricExp(V)
	\to \MetricExp(V) \) sending a triple
	\( (\lambda, v, S) \) to the configuration
	\[
		\lambda S + v \coloneqq \{ \lambda s + v \mid
		s \in S \}
	\]
	is continuous.
\end{lemma}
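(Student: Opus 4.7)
My plan is to establish continuity pointwise at each triple $(\lambda_0, v_0, S_0)$, splitting into two cases depending on whether $S_0$ is empty. If $S_0 = \emptyset$, then the convention $D([\emptyset], T) = +\infty$ for $T \neq \emptyset$ forces a neighborhood of $[\emptyset]$ in $\MetricExp(V)$ to contain only $[\emptyset]$ itself; since the map sends $(\lambda, v, [\emptyset])$ to $[\emptyset]$ for every $(\lambda,v)$, continuity at such a point is automatic.

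If $S_0 \neq \emptyset$, I would interpolate through the auxiliary configuration $\lambda_0 S' + v_0$ and apply the triangle inequality for $D$:
\[
    D(\lambda_0 S_0 + v_0, \lambda' S' + v')
    \leq D(\lambda_0 S_0 + v_0, \lambda_0 S' + v_0)
    + D(\lambda_0 S' + v_0, \lambda' S' + v').
\]
The first summand is exactly $|\lambda_0| \cdot D(S_0, S')$, because the affine map $x \mapsto \lambda_0 x + v_0$ rescales the norm by the factor $|\lambda_0|$. For the second summand, both configurations are indexed by the same finite set $S'$; pairing each $s'$ with itself yields the pointwise estimate $\norm{(\lambda_0 - \lambda')s' + (v_0 - v')}$, so that the Hausdorff distance is bounded by $|\lambda_0 - \lambda'| \cdot \max_{s' \in S'} \norm{s'} + \norm{v_0 - v'}$.

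The main, and essentially only, subtlety is that $S' \mapsto \max_{s' \in S'} \norm{s'}$ is unbounded on $\MetricExp(V)$, so the second summand cannot be controlled globally. The resolution relies on the finiteness of $S_0$: whenever $D(S_0, S') \leq \epsilon$, each $s' \in S'$ is within $\epsilon$ of some $s \in S_0$, hence $\max_{s' \in S'} \norm{s'} \leq (\max_{s \in S_0} \norm{s}) + \epsilon$, which is finite and locally bounded near $S_0$. Plugging this local bound back into the estimate above, the three error terms $|\lambda_0| \cdot D(S_0, S')$, $|\lambda_0 - \lambda'| \cdot (\max_{s \in S_0}\norm{s} + \epsilon)$, and $\norm{v_0 - v'}$ can all be made arbitrarily small by taking $(\lambda', v', S')$ sufficiently close to $(\lambda_0, v_0, S_0)$, which proves the claimed continuity.
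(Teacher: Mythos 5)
Your proof is correct and takes essentially the same route as the paper, which simply records the single estimate \( D(\lambda S + v, \mu T + w) \leq \norm{v-w} + \lambda D(S,T) + |\lambda - \mu|\, D(0,T) \) obtained from the same triangle-inequality decomposition. Your version merely unpacks this, adding the (correct) observations that \( \max_{s' \in S'} \norm{s'} \) is locally bounded near \( S_0 \) and that \( [\emptyset] \) is isolated, which the paper leaves implicit.
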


\begin{proof}
	One has
	\[
		D(\lambda S + v, \mu T + w) 
		\leq \norm{v - w} + \lambda D(S,T) + |\lambda - \mu|
		D(0, T)
	\]
	which shows that the function is continuous.
\end{proof}

\begin{proposition}
	\label{vector space cone}
	Let
	\( V \) be a rotund vector space and let us denote by
	\( \MetSphere(V) \subset \MetricRan(V) \) the subspace
	of configurations whose minimal enclosing ball has center
	\( 0 \)
	and radius \( 1 \).
	Since such a configuration must have at least two points,
	\( \MetSphere(V) \) is naturally stratified over the poset
	\( \OrdinalOmega_{2 \leq} \).
	Then, one has a canonical homeomorphism
	\[
		\MetricRan(V)
		\IsCanonicallyIsomorphicTo V \times \Cone(\MetSphere(V))
	\]
	over the isomorphism
	\( \OrdinalOmega_{1 \leq} \IsCanonicallyIsomorphicTo
	\OrdinalOmega_{2 \leq}^\triangleleft \).
\end{proposition}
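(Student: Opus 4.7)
The plan is to construct explicit inverse maps between \(\MetricRan(V)\) and \(V \times \Cone(\MetSphere(V))\) using the data of the restricted minimal enclosing ball. For any \(S \in \MetricRan(V)\), write \((\Center_S, \Radius_S)\) for the center and radius provided by the previous theorem. Observe that \(\Radius_S = 0\) if and only if \(S\) is a singleton \(\{v\}\), in which case \(\Center_S = v\). When \(|S| \geq 2\) the rescaled configuration \((S - \Center_S)/\Radius_S\) lies in \(\MetSphere(V)\) by translation/scaling covariance of the minimal enclosing ball (this covariance is immediate from the uniqueness statement and the linear behaviour of \(\Conv\) and of \(v \mapsto \max_s\norm{s-v}\) under affine changes of variable).

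I would then define
\[
    \Phi \From \MetricRan(V) \longrightarrow V \times \Cone(\MetSphere(V))
\]
by \(\Phi(\{v\}) = (v, 0)\) and, for \(|S| \geq 2\),
\(
    \Phi(S) = \bigl(\Center_S,\ (\Radius_S,\ (S - \Center_S)/\Radius_S)\bigr),
\)
together with its candidate inverse
\(
    \Psi(v, 0) = \{v\}
\)
and \(\Psi\bigl(v, (\lambda, T)\bigr) = \lambda T + v\). The bijectivity \(\Psi \circ \Phi = \mathrm{id}\) and \(\Phi \circ \Psi = \mathrm{id}\) follows from the covariance formulas \(\Center_{\lambda T + v} = \lambda \Center_T + v\), \(\Radius_{\lambda T + v} = \lambda \Radius_T\), applied to \(T \in \MetSphere(V)\) where \(\Center_T = 0\) and \(\Radius_T = 1\). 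Stratification compatibility is automatic: \(|S| = n \geq 2\) corresponds under \(\Phi\) to a point in the open stratum of the cone indexed by the cardinality of \(T\), and singletons correspond to the tip.

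For the continuity of \(\Phi\), on the open locus \(\{|S| \geq 2\}\) it follows directly from continuity of \((\Center_S, \Radius_S)\) established in the previous section, together with continuity of \(S \mapsto (S - \Center_S)/\Radius_S\) (which is an instance of the lemma stating that \((\lambda, v, S) \mapsto \lambda S + v\) is continuous). At a singleton \(\{v\}\), a basic neighborhood of \((v,0)\) in \(V \times \Cone(\MetSphere(V))\) has the form
\[
    \Ball(v, \delta) \times \bigl(\{0\} \amalg (0,\varepsilon) \times \MetSphere(V)\bigr),
\]
whose \(\Phi\)-preimage is \(\{S : \norm{\Center_S - v} < \delta,\ \Radius_S < \varepsilon\}\); this is open by continuity of \(\Center\) and \(\Radius\). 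For the continuity of \(\Psi\), away from the tip it is again supplied by the lemma on \((\lambda, v, S) \mapsto \lambda S + v\). Continuity at a point \((v, 0)\) reduces to the estimate \(D(\lambda T + v_n, \{v\}) \leq \lambda + \norm{v_n - v}\), valid whenever \(T \in \MetSphere(V)\) (so \(\max_{t \in T} \norm{t} \leq 1\)), which shows that a basic neighborhood of \((v,0)\) in the cone is sent inside any prescribed \(D\)-ball around \(\{v\}\).

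The main obstacle is essentially bookkeeping: one must ensure that the rescaling map \(S \mapsto (S - \Center_S)/\Radius_S\), which is only defined on the open locus \(\{|S| \geq 2\}\), glues continuously with the assignment \(\{v\} \mapsto 0\) across the lower-dimensional stratum. This is handled uniformly by the estimate above, which controls how fast configurations approaching a singleton must do so compared to their rescaled spheres. Everything else reduces to the continuity of \((\Center_S, \Radius_S)\) proved in the interlude and the continuity of the affine action \((\lambda, v, S) \mapsto \lambda S + v\).
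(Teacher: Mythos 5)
Your proposal is correct and follows essentially the same route as the paper: the same explicit maps \( S \mapsto (\Center_S,(\Radius_S, \Radius_S^{-1}(S-\Center_S))) \) and \( (v,(\lambda,T)) \mapsto \lambda T + v \), with continuity on the open locus supplied by the continuity of \( (\Center_S,\Radius_S) \) and of the affine action, and continuity across the singleton stratum controlled by \( \Radius_S \to 0 \) together with the boundedness of configurations in \( \MetSphere(V) \). The only differences are cosmetic — you argue with open preimages where the paper uses sequences, and you make explicit the affine covariance of the minimal enclosing ball that the paper leaves implicit.
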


\begin{proof}
	In both cases the map sends one point configurations
	\( v \in V \) to the tuple \( (v, 0) \) where \( 0 \) represents
	the tip of the cone, and sends
	multiple point configurations
	\( S \subset V \) to the tuple
	\( (\Center_S, (\Radius_S, \Radius_S^{-1}(S-\Center_S))) \).
	The inverse map simply sends
	tuples \( (v,(\lambda, S)) \) to \( \lambda S + v \).

	By the previous lemma and
	since \( S \mapsto \Center_S \) and \( S \mapsto \Radius_S \)
	are continuous, it is clear that the bijection
	restricts to a homeomorphism
	between the open subspace of non-punctual configurations
	on one side and the product of \( V \) with the interior
	of the cone on the other side.

	Finally, if \( S_n \to v \) is a converging sequence
	with limit a punctual configuration, then by continuity
	\( \Center_{S_n} \to v \) and
	\( \Radius_{S_n} \to 0 \), which means that the image of
	\( S_n \) converges to \( (v, 0) \) by definition of
	the topology of the cone.
	Conversely, if \( (v_n,(\lambda_n, S_n)) \)
	is a sequence converging
	to \( (v, 0) \), this means by definition of the topology of the
	cone that \( \lambda_n \to 0 \) and since
	\( S_n \) is bounded,
	then \( \lambda_n S_n \to 0 \) so that
	\( v_n + \lambda_n S_n \to v \)
	in \( \MetricRan(V) \).
\end{proof}

\begin{theorem}
	\label{conically}
	The exponential \( \MetricExp(X) \) is conically stratified,
	whenever
	\( X \) is a separated topological space locally homeomorphic
	to a rotund vector space.
\end{theorem}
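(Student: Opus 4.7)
The plan is to verify the conical stratification condition \UnskipRef{strat} at each point $S = \{x_1, \ldots, x_n\} \in \MetricExp(X)$ with $n \geq 1$. Using that $X$ is separated and locally homeomorphic to a rotund vector space at each $x_i$, one can choose pairwise disjoint open neighborhoods $x_i \in U_i \subset X$ with each $U_i$ homeomorphic to a rotund vector space $V_i$. By the minimality observation \UnskipRef{minimality}, $\MetricExp$ preserves open embeddings, so $\MetricExp(U_1 \sqcup \cdots \sqcup U_n) \hookrightarrow \MetricExp(X)$ is an open embedding; by the exponential property of $\MetricExp$, the source is canonically identified with $\prod_{i=1}^n \MetricExp(V_i)$.

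I then extract a cone-shaped neighborhood of $S$ from this product. The subspace $N \coloneqq \prod_{i=1}^n \MetricRan(V_i)$ is open and contains $S$ viewed as the tuple of singletons $(\{x_1\}, \ldots, \{x_n\})$. Applying Proposition \ref{vector space cone} to each factor yields
\[
	\MetricRan(V_i) \IsCanonicallyIsomorphicTo V_i \times \Cone(\MetSphere(V_i))
\]
as stratified spaces over $\OrdinalOmega_{1\leq} \IsCanonicallyIsomorphicTo \OrdinalOmega_{2\leq}^\triangleleft$. Iterating Proposition \ref{Product of cones} collapses $\prod_{i=1}^n \Cone(\MetSphere(V_i))$ into a single cone $\Cone(L)$ with $L \coloneqq \MetSphere(V_1) \Join \cdots \Join \MetSphere(V_n)$. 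Setting $W \coloneqq \prod_{i=1}^n V_i$, this gives a homeomorphism $N \IsCanonicallyIsomorphicTo W \times \Cone(L)$.

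The final step aligns this decomposition with the cardinality stratification of $\MetricExp(X)$. The open $W$ embeds into the $n$-th stratum of $\MetricExp(X)$ via $(y_1, \ldots, y_n) \mapsto \{y_1, \ldots, y_n\}$ (injective because the $U_i$ are disjoint), providing an open neighborhood of $S$ within that stratum. The tip of $\Cone(L)$ is the all-singletons locus, which sits in the same cardinality-$n$ stratum as $S$. A point of $L$ lying in the $J$-indexed summand of the iterated join (for $\emptyset \neq J \subset \{1,\ldots,n\}$) with cardinalities $k_j \geq 2$ for $j \in J$ corresponds to a configuration of total cardinality $(n - |J|) + \sum_{j \in J} k_j > n$, providing the required poset map from the join $\OrdinalOmega_{2\leq} \Join \cdots \Join \OrdinalOmega_{2\leq}$ into $\PointedOmega_{n<}$.

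The main obstacle will be the careful bookkeeping of this stratification: verifying that the iterated join of posets $\OrdinalOmega_{2\leq}$, together with the sum-of-cardinalities map, fits into $\PointedOmega_{n<}$ compatibly with the cone decomposition obtained from Proposition \ref{Product of cones}. The degenerate case $S = [\emptyset]$ is handled separately, using that $[\emptyset]$ is isolated from nonempty configurations (its Hausdorff distance to any other being $+\infty$), so $\{[\emptyset]\}$ is its own open neighborhood and is trivially cone-shaped over the empty link.
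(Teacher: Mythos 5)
Your proposal is correct and follows essentially the same route as the paper: disjoint rotund charts around the points of \( S \), the exponential property plus preservation of open embeddings to identify an open neighborhood with \( \prod_i \MetricRan(V_i) \), then Proposition \ref{vector space cone} factor by factor and Proposition \ref{Product of cones} to collapse the product of cones into a single cone over the join, with the same isolated-point treatment of \( [\emptyset] \). The poset bookkeeping you flag as the main obstacle is exactly the content of the paper's final commutative diagram over \( \left(\Join_{s \in S} \OrdinalOmega_{2 \leq}\right)^\triangleleft \), so no new idea is needed there.
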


\begin{proof} 
	Since the emptyset is a disjoint point from the rest
	of the space, it emits a conical neighborhood trivially.
	Let \( S \in \MetricRan(M) \) so that  \( |S| >0 \).
	By assumption,
	for each \( s \in S \), one
	can find an open embedding \( V_s \hookrightarrow X \) carrying
	the origin of a rotund vector space \( V_s \) to \( s \in X \).
	Moreover these can be chosen to be disjoint in \( X \).

	Since \( \MetricExp \) is an exponential which also preserves
	open embeddings,
	one can build a stratified open embedding
	\[
		\begin{tikzcd}[column sep = 25]
			\prod_{s \in S} \MetricRan(V_s) \rar[hook] \ar{d}
			&
			\prod_{s \in S} \MetricExp(V_s) \ar{r}{\IsIsomorphicTo}
			\ar{d}
			&
				\MetricExp\left(\coprod_{s \in S}V_s\right)
				\ar[hookrightarrow]{r} \ar{d}
			& \MetricExp(M) \ar{d} \\
			\prod_{s \in S} \OrdinalOmega_{1 \leq} \ar{r}{=}
			& \prod_{s \in S} \OrdinalOmega_{1 \leq} \ar{r}{+}
			& \PointedOmega \ar{r}{=}
			& \PointedOmega
		\end{tikzcd}
	\]
	whose image contains \( S \).

	Since a finite product of cones is again homeomorphic to
	a cone as a stratified space
	\UnskipRef{Product of cones} and since
	\( \MetricRan(V_s) \) is homeomorphic to
	\( V_s \times \Cone(\MetSphere(V_s)) \) for every \( s \in S \)
	\UnskipRef{vector space cone},
	one gets stratified homeomorphisms
	\[
		\begin{tikzcd}
			\prod_{s \in S} \MetricRan(V_s)
			\arrow[r, "\IsIsomorphicTo"] \arrow[d]
			& \prod_{s \in S} V_s \times \Cone(\MetSphere(V_s))
			\arrow[r, "\IsIsomorphicTo"] \arrow[d]
			& \prod_{s \in S} V_s
			\times \Cone\left(\Join_{s \in S} \MetSphere(V_s)\right)
			\arrow[d]
			\\
			\prod_{s \in S} \OrdinalOmega_{1 \leq}
			\arrow{r}{\IsIsomorphicTo}
			& \prod_{s \in S} (\OrdinalOmega_{2 \leq})^\triangleleft
			\arrow{r}{\IsIsomorphicTo}
			& \left(\Join_{s \in S}
			\OrdinalOmega_{2 \leq}\right)^\triangleleft
	\end{tikzcd}
	\]
	which concludes the proof.
\end{proof}

\begin{example}
	Since Fréchet manifolds are locally homeomorphic to Hilbert spaces
	\cite[6.1]{zbMATH03734780}
	and Hilbert spaces are rotund,
	\( \MetricExp(V) \) is conically stratified when \( V \) is
	a Fréchet manifold.
\end{example}

\begin{remark}
	Since \( \MetricExp(X) \) is conically stratified, it follows that
	each truncated version
	\( \TruncatedMetExp n (X) \) is also conically stratified.
	This truncated result was previously
	obtained by Ayala, Francis and Tanaka
	for \( X \) a smooth manifold
	\cite[3.7.5]{doi:10.4171/jems/856}.
\end{remark}

\begin{corollary}
	Let \( X \) be a metrizable space, locally homeomorphic to
	a rotund topological vector space.
	Then, the ∞-categories of \( \PointedOmega \)-constructible
	hypersheaves of spaces on
	\( \TopExp(X) \) and \( \MetricExp(X) \) are canonically
	equivalent.

	Moreover, both can be represented as the ∞-category of functors
	from the exit path ∞-category
	\( \Exit_{\PointedOmega}(\TopExp(X))
	= \Exit_{\PointedOmega}(\MetricExp(X)) \)
	to the ∞-category of spaces.
\end{corollary}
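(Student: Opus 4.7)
The plan is to reduce the statement to Lurie's exodromy theorem applied to $\MetricExp(X)$, and then to transfer the conclusion to $\TopExp(X)$ via the companion article \cite{arXiv:2102.12325}. Since $X$ is metrizable it is in particular separated; combined with the hypothesis that $X$ is locally homeomorphic to a rotund topological vector space, this places us in the situation of Theorem~\ref{conically}, so the metric exponential $\MetricExp(X)$ is conically stratified over $\PointedOmega$.

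The first step is to apply Lurie's exodromy theorem \cite[A.9.3]{arXiv:0911.0018} to $\MetricExp(X)$. Conical stratification yields a canonical equivalence between the $\infty$-category of $\PointedOmega$-constructible hypersheaves of spaces on $\MetricExp(X)$ and the $\infty$-category of functors $\Exit_{\PointedOmega}(\MetricExp(X)) \to \mathsf{Spaces}$. This provides the desired representation on the metric side.

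The second step is to transfer this equivalence to the topological side. The continuous identity bijection $\iota \colon \TopExp(X) \to \MetricExp(X)$ is a map of stratified spaces (both being stratified by cardinality) whose restriction to every truncation $\TruncatedTopExp n (X) = \TruncatedMetExp n (X)$ is a homeomorphism, by the proposition of the previous subsection. The companion article \cite{arXiv:2102.12325} develops precisely the comparison theory needed: under the hypotheses of a stratum-wise homeomorphism and conical stratification of the target, a refinement of topology such as $\iota$ induces an equivalence both on exit-path $\infty$-categories and on $\infty$-categories of $\PointedOmega$-constructible hypersheaves of spaces. Invoking this result furnishes $\Exit_{\PointedOmega}(\TopExp(X)) \simeq \Exit_{\PointedOmega}(\MetricExp(X))$ and the claimed equivalence of constructible hypersheaf $\infty$-categories, completing the corollary.

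The main obstacle lies entirely in this second step. The topological exponential is almost never conically stratified, so Lurie's exodromy theorem does not apply to $\TopExp(X)$ directly, and one is forced to rely on the delicate transfer theorem of \cite{arXiv:2102.12325}. In the present setting its hypotheses reduce to the stratum-wise identifications $\TruncatedTopExp n (X) = \TruncatedMetExp n (X)$ and to the conical stratification of $\MetricExp(X)$ supplied by Theorem~\ref{conically}, both of which are already in hand.
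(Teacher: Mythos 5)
Your overall architecture matches the paper's: establish conical stratification of \( \MetricExp(X) \) via Theorem~\ref{conically}, then invoke the companion article \cite{arXiv:2102.12325} to obtain both the exit-path representation and the comparison with \( \TopExp(X) \) (the paper does this in one stroke via the single main theorem [3.13] of that article, whereas you split it into ``exodromy for the metric side, then transfer''; that splitting is fine in principle).

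The genuine gap is that you verify only one of the hypotheses of the theorems you invoke. Both Lurie's exodromy theorem and the main theorem of \cite{arXiv:2102.12325} require, beyond conical stratification, that the stratified space be \emph{paracompact} and that each stratum be \emph{locally of singular shape}; you assert the conclusion from conical stratification alone. Paracompactness of \( \MetricExp(X) \) is exactly what the metrizability hypothesis on \( X \) is for (metrizability of \( X \) passes to \( \MetricExp(X) \), hence paracompactness), and the paper even follows the corollary with a proposition showing this can fail without it --- so a proof that never uses the metrizability of \( X \) should raise a flag. The local singular shape condition is the real content of the paper's proof: one reduces to \( X \) homeomorphic to a locally convex space \( V \) and checks that the opens \( [C_s]_{s \in S} \cap \StratumExp{n}(V) \), for \( \{C_s\}_{s\in S} \) a family of \( n \) disjoint convex opens, form a basis of the stratum stable under intersection whose members are homeomorphic to \( \prod_{s \in S} C_s \) and hence contractible. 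Neither of these verifications appears in your argument, so as written it does not establish the hypotheses under which the cited results apply.
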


\begin{proof}
	Since we know that \( \MetricExp(X) \) is conically stratified,
	we only need to check the other axioms of the main theorem of
	\emph{Constructible hypersheaves via exit paths}
	\cite[3.13]{arXiv:2102.12325}.
	Since \( X \) is metrizable, \( \MetricExp(X) \) is also metrizable
	and thus paracompact.

	We now prove that each stratum is locally of singular shape.
	Being a local property, we can reduce to the case where \( X \)
	is homeomorphic
	to a separated locally convex topological vector space \( V \)
	\cite[A.4.16]{arXiv:0911.0018}.
	The stratum \( 0 \in \PointedOmega \) amounts to a single point
	so there is nothing to prove.

	Assume \( n \geq 1 \).
	By assumption the convex open sets form a basis of the topology of
	\( V \) which is stable under finite intersections.
	As a consequence, the opens of the form
	\( [C_s]_{s \in S} \cap \StratumExp n(V) \)
	where \( \{C_s\}_S \) is a family of \( |S| = n \)
	disjoint convex open subsets of \( V \),
	form a basis of the topology of \( \StratumExp n (V) \) which
	is stable under intersection.
	It is then enough to see that
	\( [C_s]_{s \in S} \cap \StratumExp n (V) \) has singular shape
	\cite[A.4.14]{arXiv:0911.0018}, which immediately follows from the
	fact that \( [C_s]_{s \in S} \cap \StratumExp n(V) \)
	is homeomorphic
	to \( \prod_{s \in S} C_s \) and is thus contractible.
\end{proof}

The metrizability axiom above cannot be easily removed as shown by
the following proposition.

\begin{proposition}
	There exists a paracompact topological space \( X \) for which
	neither \( \TopExp(X) \) nor \( \MetricExp(X) \) is paracompact. 
\end{proposition}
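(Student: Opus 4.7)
The plan is to take $X = \Reals_\ell$, the Sorgenfrey line, which is Lindelöf, regular and Hausdorff, hence paracompact (and notably not metrizable, consistent with the metrizability hypothesis of the previous corollary). Since the proposition opening this section identifies $\TruncatedTopExp 2 (X) = \TruncatedMetExp 2 (X)$ as a common closed subspace of $\TopExp(X)$ and of $\MetricExp(X)$ whenever $X$ is separated, and since closed subspaces of paracompact Hausdorff spaces are paracompact (hence normal), it is enough to prove that this common subspace fails to be normal.

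By \UnskipRef{Lemme: perfect} (Handel's lemma), the subspace carries the quotient topology of the closed surjection $X^2 \twoheadrightarrow X^2 / \SymmetricGroup 2$. The quotient is Hausdorff: $\SymmetricGroup 2$ acts by homeomorphisms on $X^2$ so the quotient map is open, and its graph is closed in $X^2 \times X^2$ because $X$ is Hausdorff, so the equivalence relation is closed. We are thus reduced to showing that $\Reals_\ell^2 / \SymmetricGroup 2$ is not normal.

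For this I would apply Jones's lemma. The image of $\Rationals^2$ is countable and dense, so the quotient is separable. On the other hand, consider the antidiagonal $D = \{(x, -x) : x > 0\} \subset \Reals_\ell^2$; its image $\bar D$ in the quotient has cardinality $2^{\aleph_0}$. Using the $\SymmetricGroup 2$-saturated Sorgenfrey boxes
\[
	V_\varepsilon(x_0) = \bigl([x_0, x_0 + \varepsilon) \times [-x_0, -x_0 + \varepsilon)\bigr)
	\cup \bigl([-x_0, -x_0 + \varepsilon) \times [x_0, x_0 + \varepsilon)\bigr),
\]
which descend to open neighborhoods of $\{x_0, -x_0\}$ in the quotient, a direct check shows that $\bar D$ is both closed and discrete. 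Jones's lemma then gives $2^{|\bar D|} \leq 2^{|\Rationals^2|}$, that is, $2^{2^{\aleph_0}} \leq 2^{\aleph_0}$, a contradiction.

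The only point requiring real verification is the bookkeeping: that $V_\varepsilon(x_0)$ meets the preimage $\{(x, -x) : x \neq 0\}$ of $\bar D$ only at $(x_0, -x_0)$ and $(-x_0, x_0)$, and that the origin is not a Sorgenfrey limit of the antidiagonal, so $\bar D$ is closed in the quotient. Both facts are immediate order-theoretic computations with half-open intervals.
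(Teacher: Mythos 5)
Your proposal is correct and rests on the same witness as the paper --- the real line with the lower limit topology, whose symmetric square \( X^2_{\SymmetricGroup 2} \) sits as a closed subspace of both \( \TopExp(X) \) and \( \MetricExp(X) \) --- but it takes a genuinely different route to the non-normality of that quotient. The paper invokes Sorgenfrey's theorem that \( X^2 \) is not normal and asserts that non-normality descends because the quotient map is closed; as a general implication this is the wrong direction (closed continuous surjections \emph{preserve} normality of the source, they do not reflect it, and perfect preimages of normal spaces can fail to be normal), so the paper's step tacitly relies on the fact that the standard witnesses in the Sorgenfrey plane --- the rational and irrational points of the antidiagonal --- are \( \SymmetricGroup 2 \)-saturated, so that a separation downstairs would pull back to one upstairs. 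You sidestep this subtlety by applying Jones's lemma directly in the quotient: separability comes from the image of \( \Rationals^2 \), and the image of the punctured antidiagonal is a closed discrete set of cardinality \( 2^{\aleph_0} \), with the saturated boxes \( V_\varepsilon(x_0) \) exhibited explicitly; the computations you defer (that \( V_\varepsilon(x_0) \) meets the antidiagonal only in the orbit of \( (x_0,-x_0) \), and that the origin is not a Sorgenfrey limit of the punctured antidiagonal, since \( [0,\varepsilon)^2 \) meets it only at the origin) do check out. Your version is thus more self-contained, and it also supplies the Hausdorffness of the quotient needed to pass from paracompactness to normality, which the paper leaves implicit. Two cosmetic points: your citation of Handel's lemma misspells the label, and the closed subspace \( \TruncatedTopExp 2 (X) \) is the quotient together with the isolated point \( [\emptyset] \) rather than the quotient itself --- harmless, since adjoining an isolated point affects neither normality nor paracompactness.
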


\begin{proof}
	Let \( X \) be the set
	of real numbers \( \Reals \) endowed with the lower limit
	topology: the topology whose
	basis of opens is made of the half open intervals \( [a, b) \).
	This space is paracompact but the product
	\( X^2 \) is not
	even normal
	\cite{doi:10.1090/s0002-9904-1947-08858-3}.
	Since the quotient map
	\( X^2 \to X^2_{\SymmetricGroup 2} \) is closed
	(\( \SymmetricGroup 2 \) being finite),
	\( X^2_{\SymmetricGroup 2} \) is also not normal.
	As it is a closed subset of both \( \TopExp(X) \)
	and \( \MetricExp(X) \) by the previous lemma,
	neither can be paracompact.
\end{proof}

\subsection*{Acknowledgments}

The authors would like to thank
Mathieu Anel,
David Ayala,
Damien Calaque,
Lucas Geyer,
Grégory Ginot
Jarek Kwapisz,
and
Yat-Hin Suen.

\bibliography{ms}

\end{document}